\documentclass[reqno]{amsart}
\usepackage[utf8]{inputenc}
\usepackage[a4paper]{geometry}
\usepackage{xargs}                      % Use more than one optional parameter in a new commands
\usepackage[pdftex,dvipsnames]{xcolor}  % Coloured text etc.
\usepackage[colorinlistoftodos,prependcaption,textsize=tiny]{todonotes} % to remove todos at the end, simply add "disable" as an argument in the square brackets above
\makeatletter
\providecommand\@dotsep{5}
\renewcommand{\listoftodos}[1][\@todonotes@todolistname]{%
  \@starttoc{tdo}{#1}}
\makeatother

\usepackage{mathtools, amsfonts, amsthm, amssymb, mathrsfs}
\usepackage{dsfont}
\usepackage[shortlabels]{enumitem}
\setlist[enumerate]{nosep}

\numberwithin{equation}{section}
\theoremstyle{plain}
\newtheorem{theorem}{Theorem}[section]

\newtheorem{lemma}[theorem]{Lemma}
\newtheorem{proposition}[theorem]{Proposition}
\newtheorem{assumption}[theorem]{Assumption}
\newtheorem*{theorem*}{Theorem}

\newtheorem{openproblem}{Open problem}
\newtheorem{definition}[theorem]{Definition}
\theoremstyle{definition}
\newtheorem{example}[theorem]{Example}

\newtheorem{remark}[theorem]{Remark}

\newcommand{\R}{\mathbb{R}}
\newcommand{\N}{\mathbb{N}}
\newcommand{\Z}{\mathbb{Z}}

\newcommand{\X}{\mathcal{X}}
\newcommand{\Y}{\mathcal{Y}}

\newcommand{\PP}{\mathcal{P}}

\newcommand{\di}{\mathrm{d}}

\DeclareMathOperator{\Int}{int}
\DeclareMathOperator{\proj}{proj}

\DeclareMathOperator{\conv}{conv}
\DeclareMathOperator{\id}{Id}

\DeclareMathOperator{\supp}{spt}
\DeclareMathOperator{\spt}{spt}
\DeclareMathOperator{\diam}{diam}

\DeclareMathOperator{\loc}{loc}

\DeclareMathOperator*{\argmin}{arg\,\min}

\usepackage{derivative}
\usepackage[shortlabels]{enumitem}
\setlist[enumerate]{nosep}

\newcommand{\qedwhite}{\hfill \ensuremath{\square}}

\usepackage[backend=biber]{biblatex} % citestyle=alphabetic bibstyle=authortitle
\usepackage{tikz}
\usetikzlibrary{shapes.misc}
\usetikzlibrary{arrows.meta,calc}
\usetikzlibrary{decorations.pathreplacing}
\usetikzlibrary{positioning}

\tikzset{cross/.style={cross out, draw=black, minimum size=2*(#1-\pgflinewidth), inner sep=0pt, outer sep=0pt},
cross/.default={1pt}}

\usepackage{caption}
\usepackage{xcolor}

\definecolor{beamerblue}{RGB}{51,51,178}
\definecolor{targetcolour}{RGB}{51,51,178}
\definecolor{sourcecolour}{RGB}{216, 0, 240}
\definecolor{mixed}{RGB}{255,0,0}
\definecolor{amaranth}{rgb}{0.9, 0.17, 0.31}
\definecolor{amber}{rgb}{1.0, 0.75, 0.0}
\definecolor{amber2}{rgb}{1.0, 0.49, 0.0}

\usepackage{nomentbl}

\usepackage{hyperref}

\title{Quantitative Uniqueness of Kantorovich Potentials}
\author{William Ford$^1$}
\thanks{$^1$CMAP École Polytechnique, Palaiseau, France. Email: \texttt{william.ford@polytechnique.edu}}
\date{}

\newcounter{step}

\begin{document}

\maketitle
\vspace{-0.7cm}
\begin{abstract}
    This paper studies the uniqueness of solutions to the dual optimal transport problem, both qualitatively and quantitatively (bounds on the diameter of the set of optimisers). 
    
    On the qualitative side, we prove that when one marginal measure's support is rectifiably connected (path-connected by rectifiable paths), the optimal dual potentials are unique up to a constant. This represents the first uniqueness result applicable even when both marginal measures are concentrated on lower-dimensional subsets of the ambient space, and also applies in cases where optimal potentials are nowhere differentiable on the supports of the marginals.
    
    On the quantitative side, we control the diameter of the set of optimal dual potentials by the Hausdorff distance between the support of one of the marginal measures and a regular connected set. In this way, we quantify the extent to which optimisers are almost unique when the support of one marginal measure is almost connected. This is a consequence of a novel characterisation of the set of optimal dual potentials as the intersection of an explicit family of half-spaces.
    
    \vspace{0.2cm}
    \noindent\textbf{Keywords:} optimal transport, Kantorovich potentials, duality, uniqueness
    
    \noindent\textbf{2020 Mathematics Subject Classification:} 49Q22, 49K40
\end{abstract}

\setcounter{tocdepth}{1}
\tableofcontents
\vspace{-0.9cm}
\section{Introduction}
Given two probability measures $\rho$ and $\mu$ on $\R^d$ and a cost function $c : \R^d \times \R^d \to \R$, the optimal transport problem between $\rho$ and $\mu$ is the minimisation problem
\begin{equation}
\label{intro: c OT problem}
    \min_{\gamma \in \Pi(\rho, \mu)} \int_{\R^d \times \R^d} c(x,y) \di \gamma(x, y),
\end{equation}
where $\Pi(\rho, \mu)$ denotes the set of all couplings of $\rho$ and $\mu$, i.e. probabilities over $\R^d \times \R^d$ with first marginal $\rho$ and second marginal $\mu$. Problem \eqref{intro: c OT problem} admits the dual formulation
\begin{equation}
\label{intro: c dual problem}
    \max_{\substack{\phi, \psi \in L^1(\rho) \times L^1(\mu) \\ \phi \oplus \psi \leq c}} \int_{\R^d} \phi(x) \di \rho(x) + \int_{\R^d} \psi(y) \di \mu(y),
\end{equation}
where $\phi \oplus \psi \leq c$ denotes the pointwise inequality constraint
\begin{equation}
    \label{intro: dual constraint}
    \phi(x) + \psi(y) \leq c(x, y) \quad \forall (x,y) \in \R^d \times \R^d.
\end{equation}
Optimisers of the dual problem \eqref{intro: c dual problem} are referred to as \textit{Kantorovich potentials}. The principal aim of this paper is to address the following questions:
 \begin{enumerate}[label= (\Roman*)]
    \item Under which hypotheses on $\rho$, $\mu$, and $c$ are Kantorovich potentials unique? \label{q: qual unique}
    \item Under which hypotheses on $\rho$, $\mu$, and $c$ are Kantorovich potentials close to being unique, i.e. the diameter of the set of optimal potentials is quantitatively small? \label{q: quant unique}
\end{enumerate}
These questions are of fundamental interest for understanding the well-posedness of the dual optimal transport problem \eqref{intro: c dual problem}. We also expect these results to be a natural, necessary tool for a more thorough investigation of the quantitative stability properties of both problems \eqref{intro: c OT problem} and \eqref{intro: c dual problem}. The majority of current quantitative stability results make assumptions ensuring both primal and dual uniqueness; see, for example, \cite{delalande2023quantitative, letrouit2024gluing, mischler2024quantitative, letrouit2025lectures}.

Question \ref{q: qual unique} has been the subject of numerous recent investigations, of which we provide an overview in Section \ref{Sect: Related works}. Kantorovich potentials are at best unique up to a constant, since for any admissible $(\phi, \psi)$, the potentials $\{(\phi-l, \psi+ l)\}_{l \in \R}$ are also admissible, and attain the same value in the objective.

To the best of our knowledge, Question \ref{q: quant unique} has not been investigated, and our results provide a first response. We denote the set of $\rho$-side Kantorovich potentials by
\begin{equation*}
    \Phi_c(\rho, \mu) := \left\{ \phi \in  L^1(\rho) : \exists \psi \in L^1(\mu) \text{ with } (\phi, \psi) \text{ optimal for } \eqref{intro: c dual problem}\right\}.
\end{equation*}
Let $p \in [1, +\infty]$. Our results measure quantitative uniqueness by the $L^p(\rho)$ diameter of the optimal $\rho$-side set of potentials, modulo additive constants:
\begin{equation}
\label{eq: diameter definition}
    \diam_{L^p}(\Phi_c(\rho, \mu)) := \sup_{\phi_0, \phi_1 \in \Phi_c(\rho, \mu)} \inf_{l \in \R} \| \phi_0 - \phi_1 - l\|_{L^p(\rho)}.
\end{equation}
Small diameter means that the difference between any two $\rho$-side Kantorovich potentials is a function of small oscillation. We do not explicitly study the uniqueness of pairs $(\phi, \psi)$ of optimisers for \eqref{intro: c dual problem}, but any bounds on $\diam_{L^\infty} (\Phi_c(\rho, \mu))$ implicitly imply the same bound for the set of $\mu$-side Kantorovich potentials, see Section \ref{subsect: equivalence of pairwise and single potential unique}.

\subsection{Contributions to qualitative uniqueness}
Our main qualitative uniqueness result establishes that Kantorovich potentials are unique up to a constant when one of the measures has rectifiably connected support. 
\begin{definition}
A set $\Omega \subseteq \R^d$ is said to be rectifiably connected if for any $x, x' \in \Omega$, there exists $\omega \in W^{1,1}([0,1]; \Omega)$ with $\omega(0) = x'$ and $\omega(1) = x$. After a reparametrisation by arc length, one can always choose such a curve to be Lipschitz.
\end{definition}
\begin{theorem}
\label{thrm: uniqueness rect connected bounded}
Let $\rho$ and $\mu$ be probability measures on $\R^d$ such that $\supp \rho$ is rectifiably connected and $\supp \mu$ is bounded. Suppose that $c \in \mathcal{C}^1(\R^d \times \R^d)$ and that there exist real-valued upper semi-continuous functions $a \in L^1(\rho)$ and $b \in L^1(\mu)$ such that
    \begin{equation*}
       \forall (x, y) \in \R^d, \quad |c(x, y)| \leq a(x) + b(y).
    \end{equation*}
Then optimisers of the dual problem \eqref{intro: c dual problem} exist and are unique up to a constant $\rho$ and $\mu$ a.e.
\end{theorem}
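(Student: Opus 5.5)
Existence is standard. Since $|c|\le a\oplus b$ with $a\in L^1(\rho)$, $b\in L^1(\mu)$ and $c$ continuous, the primal problem \eqref{intro: c OT problem} is finite and admits an optimal $\gamma\in\Pi(\rho,\mu)$. The Kantorovich duality theorem in this integrable setting (e.g. Villani, Theorem 5.10) gives no duality gap and the existence of an optimal pair, which can be chosen $c$-concave: $\phi=\psi^c$, $\psi=\phi^c$, with
\begin{equation*}
\psi^c(x)=\inf_{y\in\supp\mu}\bigl(c(x,y)-\psi(y)\bigr).
\end{equation*}
The infimum may be taken over $\supp\mu$ because only $\mu$-a.e. values of $\psi$ matter; we replace $\psi$ by $-\infty$ off $\supp\mu$, and this can only increase $\psi^c$.

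For uniqueness, let $(\phi_0,\psi_0)$ and $(\phi_1,\psi_1)$ be optimal. Replace each by its double $c$-transform on the supports. This does not decrease the objective and preserves the constraint, so both pairs remain optimal, and they agree $\rho$- and $\mu$-a.e. with the originals (by optimality, $\phi_i^{cc}\ge\phi_i$ with equal integrals). The key local properties come from the fact that $K:=\overline{\supp\mu}$ is compact and $c\in\mathcal C^1$:
\begin{itemize}
\item Each $\phi_i=\inf_{y\in K}\bigl(c(\cdot,y)-\psi_i(y)\bigr)$ is an infimum of a family of $\mathcal C^1$ functions with locally uniformly bounded gradients $\nabla_x c(\cdot,y)$, $y\in K$. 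Hence $\phi_i$ is locally Lipschitz on $\R^d$ and finite wherever it is finite at one point.
\item Every optimal $\gamma$ is supported on $\Gamma_i=\{(x,y):\phi_i(x)+\psi_i(y)=c(x,y)\}$, by complementary slackness, so $\Gamma:=\supp\gamma\subseteq\Gamma_0\cap\Gamma_1$ after the usual upper semicontinuity argument for $c$-concave pairs.
\item For every $x\in\supp\rho$ there is $y\in K$ with $(x,y)\in\Gamma$, since $\proj_x\Gamma$ is closed (by compactness of $K$) and has full $\rho$-measure.
\end{itemize}

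The core step is a first-order argument along curves. Fix $x,x'\in\supp\rho$ and a Lipschitz curve $\omega:[0,1]\to\supp\rho$ joining them. The function $f(t)=\phi_0(\omega(t))-\phi_1(\omega(t))$ is Lipschitz, so $f(1)-f(0)=\int_0^1 f'(t)\,\di t$, and it suffices to show $f'(t)=0$ at a.e. differentiability point $t$. At such a $t$, pick $y$ with $(\omega(t),y)\in\Gamma\subseteq\Gamma_0\cap\Gamma_1$. Then for $i=0,1$ the function
\begin{equation*}
s\mapsto c(\omega(s),y)-\psi_i(y)-\phi_i(\omega(s))
\end{equation*}
is $\ge0$ and vanishes at $s=t$. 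Since both $c(\omega(\cdot),y)$ and $\phi_i\circ\omega$ are differentiable at $t$, the derivative of this function is zero there. This gives
\begin{equation*}
(\phi_i\circ\omega)'(t)=\nabla_x c(\omega(t),y)\cdot\omega'(t)\quad\text{for } i=0,1,
\end{equation*}
so $f'(t)=0$. Hence $\phi_0-\phi_1$ is constant on $\supp\rho$, i.e. $\rho$-a.e. Then $\psi_i=\phi_i^c$, with the infimum over $\supp\rho$, shows $\psi_0-\psi_1$ equals minus that constant $\mu$-a.e.

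\textbf{Main obstacle.} The delicate point is differentiability along the curve. We need $\phi_0\circ\omega$ and $\phi_1\circ\omega$ to be simultaneously differentiable a.e. This follows from $\phi_i\circ\omega$ being Lipschitz, and needs no differentiability of $\phi_i$ on $\R^d$, which may fail everywhere on $\supp\rho$. We also need a contact point $y$ at every $t$ with $(\omega(t),y)$ lying in both contact sets. The choice $\Gamma=\supp\gamma$ for a single optimal plan, combined with closedness of contact sets for the $c$-concave (hence upper semicontinuous in the right sense) representatives, handles this. Care is needed to ensure $\Gamma_i$ is closed, using continuity of $\phi_i$ and upper semicontinuity of $\psi_i=\phi_i^c$.
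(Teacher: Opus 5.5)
Your proposal is correct and follows essentially the same route as the paper. You pass to $c$-concave (hence locally Lipschitz) representatives, use compactness of $\spt\mu$ to find for every $t$ a contact point $(\omega(t),y)$ in the support of an optimal plan, and use the first-order condition along a Lipschitz curve in $\spt\rho$ to get $(\phi\circ\omega)'(t)=\langle\nabla_x c(\omega(t),y),\dot\omega(t)\rangle$ for a.e. $t$, a value that does not depend on the potential. The only differences are cosmetic: you use a single optimal plan instead of $\spt\Gamma_c(\rho,\mu)$, and you phrase the paper's limits $h\to0^\pm$ as the vanishing derivative at an interior minimum.
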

Measures with rectifiably connected support include, for example, measures whose support is a $\mathcal{C}^1$ connected submanifold of $\R^d$. To the best of our knowledge, Theorem \ref{thrm: uniqueness rect connected bounded} represents the first dual uniqueness result for which both marginal measures can be concentrated on lower-dimensional subsets of the ambient space. Theorem \ref{thrm: uniqueness rect connected bounded} applies to the cost $c(x, y) = \|x - y \|^p$ for all $p \in (1, +\infty)$, but not $p=1$.

By writing each marginal measure's support as a union of rectifiably connected components, we obtain a graph-based sufficient condition for uniqueness.
\begin{theorem}
\label{thrm: uniqueness graph continuous case}
 Let $c \in \mathcal{C}^1(\R^d \times \R^d)$.  Let $\rho$ and $\mu$ be probability measures on $\R^d$ with bounded support. Assume that $(A_i)_{i \in I}$ and $(B_j)_{j \in J}$ are families of measurable, rectifiably connected sets such that
 \begin{equation*}
     \spt \rho = \bigcup_{i \in I} A_i \quad \text{ and } \quad  \spt \mu = \bigcup_{j \in J} B_j.
 \end{equation*}
 Then solutions to the dual problem \eqref{intro: c dual problem} exist, and are unique up to a constant $\rho$ and $\mu$ a.e. if the bipartite graph $G = (V, E)$ is connected, where
 \begin{enumerate}[label=(\roman*)]
     \item $V = (A_i)_{i \in I} \cup (B_j)_{j \in J}$.
     \item $(A_i, B_j) \in E$ if and only if there exists $\gamma \in \Pi(\rho, \mu)$ optimal for \eqref{intro: c OT problem} with $\gamma(A_i \times B_j) > 0.$
 \end{enumerate}
\end{theorem}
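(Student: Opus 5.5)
Existence of dual optimisers follows from standard duality theory: $c$ is continuous and bounded on the compact set $\spt\rho\times\spt\mu$, so the bound hypothesis of Theorem \ref{thrm: uniqueness rect connected bounded} holds with constant $a,b$. We may also restrict to $c$-concave pairs $(\phi,\psi)$ with $\phi=\psi^c$, $\psi=\phi^c$. For uniqueness, fix two optimal pairs $(\phi_0,\psi_0)$ and $(\phi_1,\psi_1)$, taken $c$-concave. These are Lipschitz on bounded sets because $c\in\mathcal C^1$. The plan is to show that $\phi_0-\phi_1$ is constant on each $A_i$ and $\psi_0-\psi_1$ is constant on each $B_j$. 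We then propagate these constants along edges of $G$.

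\textbf{Step 1 (constancy on components).} Fix any optimal plan $\gamma$; by Kantorovich duality it is optimal for every optimal dual pair. For each optimal pair, $\phi_k(x)+\psi_k(y)=c(x,y)$ holds on $\spt\gamma$. For $x\in\spt\rho$ there is some $y$ with $(x,y)\in\spt\gamma$, since $\spt\gamma$ is closed and projects densely (onto, by compactness) onto $\spt\rho$. Then $\phi_k(\cdot)-c(\cdot,y)$ is maximised at $x$ over $\R^d$. Hence at points of differentiability along a curve, the derivative of $\phi_k$ in the curve direction equals that of $c(\cdot,y)$.

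Now take a Lipschitz curve $\omega:[0,1]\to A_i$. The function $t\mapsto\phi_k(\omega(t))$ is Lipschitz, hence a.e. differentiable. At a.e. $t$ the one-sided difference quotients are controlled by semiconcavity-type bounds: $\phi_k(\omega(s))-\phi_k(\omega(t))\le c(\omega(s),y_t)-c(\omega(t),y_t)$, where $(\omega(t),y_t)\in\spt\gamma$. Dividing by $s-t$ of either sign gives $(\phi_k\circ\omega)'(t)=\nabla_xc(\omega(t),y_t)\cdot\omega'(t)$.

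The key point is that the contact $y_t$ can be chosen from $\gamma$ independently of $k$. So $(\phi_0\circ\omega)'=(\phi_1\circ\omega)'$ a.e., and $\phi_0-\phi_1$ is constant along $\omega$. Since $A_i$ is rectifiably connected, $\phi_0-\phi_1\equiv l_i$ on $A_i$. This step is exactly the mechanism behind Theorem \ref{thrm: uniqueness rect connected bounded}, and I would cite it. One subtlety remains: $A_i$ need not be closed, but the curve lies in $A_i\subseteq\spt\rho$, which is all that is used. Symmetrically, $\psi_0-\psi_1\equiv m_j$ on $B_j$.

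\textbf{Step 2 (edges link constants).} If $(A_i,B_j)\in E$, pick an optimal $\gamma'$ with $\gamma'(A_i\times B_j)>0$. Then some $(x,y)\in A_i\times B_j$ satisfies both $\phi_k(x)+\psi_k(y)=c(x,y)$ for $k=0,1$, since equality holds $\gamma'$-a.e. for each optimal pair. Subtracting gives $l_i+m_j=0$. Because $G$ is connected, fix $i_0$ and set $l:=l_{i_0}$. Along any path $A_{i_0},B_{j_1},A_{i_1},\dots$ we get $m_{j}=-l$ and $l_i=l$ for all vertices. Hence $\phi_0-\phi_1=l$ on $\spt\rho$ and $\psi_0-\psi_1=-l$ on $\spt\mu$, which gives uniqueness up to a constant $\rho$ and $\mu$ a.e.

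\textbf{Main obstacle.} The delicate step is Step 1, namely the derivative identification along a curve lying in a possibly lower-dimensional set where $\phi_k$ may be nowhere differentiable in $\R^d$. It is handled by the two-sided difference-quotient argument above with a $k$-independent contact point $y_t$. A second point to be careful about is passing from "a.e." equalities to pointwise equalities on supports, which uses continuity of the $c$-concave representatives.
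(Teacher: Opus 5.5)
Your proposal is correct and follows essentially the same route as the paper. You show $\phi_0-\phi_1$ is constant on each $A_i$ and $\psi_0-\psi_1$ on each $B_j$ via the curve argument of Theorem \ref{thrm: uniqueness rect connected bounded}, then propagate the constants along a path in $G$ using a point of $A_i\times B_j$ where compatibility holds for both pairs (you take it outside two $\gamma'$-null sets, the paper takes it in $\spt\gamma\cap(A_i\times B_j)$), and the only slip is that constant $a,b$ dominate $c$ only on $\spt\rho\times\spt\mu$, not on $\R^d\times\R^d$, so for existence take instead $a(x)=b(x)=\max_{\|u\|,\|v\|\le\|x\|}|c(u,v)|$ or pose the problem on the supports and invoke Proposition \ref{prop: selection principle for c concave representative}.
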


\subsection{Contributions to quantitative uniqueness}

For two sets $A, B \subseteq \R^d$, the Hausdorff distance $d_H(A, B)$ is given by
\begin{equation*}
    d_H(A, B) := \max\left\{\sup_{x \in A} d(x, B), \sup_{x \in B} d(x, A)\right\}.
\end{equation*}
Our main quantitative uniqueness result bounds $\diam_{L^\infty}\Phi_c(\rho, \mu)$ by the Hausdorff distance between $\spt \rho$ and a connected set $ \Omega \subseteq \R^d$ satisfying the following assumption.
\begin{assumption}
\label{ass: domain assumption connexivity by bounded curvature arcs}
    Suppose $\Omega \subseteq \R^d$ is such that
    \begin{equation}
\label{eq: connexivity of support by arcs of uniformly bounded curvature}
    C_\Omega := \sup_{z, z' \in \Omega} \inf_{\substack{\omega : [0, 1] \to \Omega \\ \omega(0) = z',\, \omega(1) = z \\ \dot \omega \in BV([0,1]; \R^d)}} \| \dot \omega\|_{L^\infty([0, 1])} + |\ddot \omega|([0, 1]) < + \infty.
\end{equation}
\end{assumption}
\noindent Here $BV([0,1]; \R^d)$ denotes the space of functions whose derivative is a finite, vector-valued Radon measure, and $|\ddot \omega|$ denotes the variation measure of $\ddot \omega$, see \cite[Chapter 5]{evans2025measure}. Roughly speaking, such sets are rectifiably connected, using arcs of uniformly bounded length and curvature. Examples include bounded $\mathcal{C}^0$ domains (domains whose boundary is locally the graph of a continuous function) and $\mathcal{C}^{1,1}$ compact connected submanifolds with boundary, see Appendix \ref{ch: path assumption}.
\begin{theorem}
\label{thrm: c 1 alpha cost Linfty bound}
Let $\X , \Y \subseteq \R^d$, let $c \in \mathcal{C}^{1, \alpha}(\X \times \Y)$ for some $\alpha \in (0, 1]$. Then there exists a constant $C(\X, \Y, c) >0$ such that for all $\Omega \subseteq \X$ satisfying Assumption \ref{ass: domain assumption connexivity by bounded curvature arcs}, all $\rho \in \mathcal{P}(\X)$, and all $\mu \in \mathcal{P}(\Y)$, $\Phi_c(\rho,\mu)$ is non-empty and
    \begin{equation}
        \diam_{L^\infty}(\Phi_c(\rho, \mu)) \leq C(1+C_\Omega)^2 \left(d_H(\supp \rho, \Omega)^{1- 1/(\alpha + 1)} + d_H(\supp \rho, \Omega)\right),
    \end{equation}
    where $C_\Omega\geq0$ is the quantity defined in Assumption \ref{ass: domain assumption connexivity by bounded curvature arcs}.
\end{theorem}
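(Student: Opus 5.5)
We reduce to c-concave potentials, compare two of them along curves in $\Omega$, and use the approximation $d_H(\supp\rho,\Omega)\le\varepsilon$ to transfer information from $\supp\rho$ to $\Omega$. Throughout, take $\X,\Y$ compact, so that $c\in\mathcal{C}^{1,\alpha}$ has bounded gradient and $\alpha$-Hölder gradient.

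\emph{Reduction to c-concave potentials.} Existence of optimisers for bounded continuous costs on compact sets is standard. Any optimal $\phi$ can be replaced by its double $c$-transform $\phi^{cc}$, which is still optimal and agrees with $\phi$ $\rho$-a.e. So it suffices to bound $\sup_{\supp\rho}|\phi_0-\phi_1-l|$ for c-concave optimal pairs $(\phi_k,\psi_k)$. Such $\phi_k$ extend to all of $\X$ as infima of the uniformly $\mathcal{C}^{1,\alpha}$ functions $c(\cdot,y)-\psi_k(y)$. Hence each $\phi_k$ is uniformly Lipschitz, with constant $L=\|\nabla_x c\|_\infty$, and semiconcave in the $\mathcal{C}^{1,\alpha}$ sense. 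At every $x$ and every $y\in\partial^c\phi_k(x)$ we have the one-sided Taylor bound
\[
\phi_k(x+h)\le \phi_k(x)+\nabla_x c(x,y)\cdot h+[c]_{1,\alpha}|h|^{1+\alpha}.
\]

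\emph{Key structural fact.} Fix an optimal plan $\gamma$. Every $x\in\supp\rho$ has some $y$ with $(x,y)\in\supp\gamma$, and then $y\in\partial^c\phi_0(x)\cap\partial^c\phi_1(x)$, since both pairs are optimal and complementary slackness holds on $\supp\gamma$. So at points of $\supp\rho$ the two potentials share a common "supergradient" $p(x)=\nabla_xc(x,y)$. This is the quantitative counterpart of the proof of Theorem~\ref{thrm: uniqueness rect connected bounded}.

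\emph{Comparing the potentials.} Set $u=\phi_0-\phi_1$. At $x\in\supp\rho$, combine the upper Taylor bound for $\phi_0$ with a lower bound for $\phi_1$. For the lower bound: $\phi_1(x+h)\ge \phi_1(x)+p(x)\cdot h-\text{error}$ fails in general, since semiconcave functions only have upper bounds. Instead, use the Lipschitz bound together with the upper bound at the nearby point $x+h$ in the reverse direction; the sets $\partial^c\phi_1(x+h)$ are nonempty and, if chosen near $\supp\gamma$, close to $y$.

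The plan is to prove the one-sided estimate
\[
u(x+h)-u(x)\le C|h|^{1+\alpha}+C\varepsilon\,|h|^{?},
\]
or more simply to argue along a curve. Take $z,z'\in\Omega$ and a curve $\omega$ realising $C_\Omega$ up to a factor 2. Subdivide $\omega$ into $N$ pieces of length $\delta$, and at each node $\omega(t_i)$ pick $x_i\in\supp\rho$ with $|x_i-\omega(t_i)|\le\varepsilon$. Each step is then estimated by Taylor expansion at $x_i$ with the common slope $p(x_i)$. This gives an increment error of order $\delta^{1+\alpha}+\varepsilon$ in each direction: apply semiconcavity of $\phi_0$ at $x_i$ toward $x_{i+1}$, and of $\phi_1$ at $x_{i+1}$ toward $x_i$, with slopes $p(x_i)$ and $p(x_{i+1})$.

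The mismatch $|p(x_i)-p(x_{i+1})|\,|x_{i+1}-x_i|$ is the obstacle, because the slope $p$ can jump. This is where the curvature bound $|\ddot\omega|$ enters. Replace the one-step bound by the telescoped two-sided inequality
\[
u(x_{i+1})-u(x_i)\le \bigl(p(x_i)-p(x_{i+1})\bigr)\cdot(x_{i+1}-x_i)+C\delta^{1+\alpha}+C\varepsilon.
\]
Sum over $i$ and use a summation by parts. The sum $\sum_i (p(x_i)-p(x_{i+1}))\cdot\dot\omega(t_i)\delta$ is bounded by $\|p\|_\infty(\|\dot\omega\|_\infty+|\ddot\omega|([0,1]))\delta$, up to the $\varepsilon$ perturbations. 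This yields
\[
u(z)-u(z')\lesssim (1+C_\Omega)^2\bigl(N\delta^{1+\alpha}+N\varepsilon+\delta\bigr),
\qquad N\sim C_\Omega/\delta .
\]
Optimising $\delta^\alpha+\varepsilon/\delta$ gives $\delta=\varepsilon^{1/(1+\alpha)}$ and total error $\varepsilon^{\alpha/(1+\alpha)}=\varepsilon^{1-1/(\alpha+1)}$, plus a linear $\varepsilon$ term. The same bound holds with $u$ replaced by $-u$, by symmetry between $\phi_0$ and $\phi_1$.

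\emph{Conclusion.} Finally, for $x\in\supp\rho$, pick $z\in\Omega$ with $|x-z|\le\varepsilon$. Since $u$ is $2L$-Lipschitz, $\operatorname{osc}_{\supp\rho}u\le\operatorname{osc}_\Omega u+4L\varepsilon$. Choosing $l$ as the midpoint of the range of $u$ gives the stated bound.

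\emph{Main obstacle.} The hardest step is controlling the summation-by-parts term involving the jumps of $p$ along the discretised curve, since $p$ is only defined $\gamma$-a.e. and need not be continuous. I would first try bounding it using the monotonicity-type inequality
\[
\bigl(p(x)-p(x')\bigr)\cdot(x-x')\ge -C|x-x'|^{1+\alpha}
\]
coming from semiconcavity of $\phi_0$, combined with $BV$ control of $\dot\omega$.
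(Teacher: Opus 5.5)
Your argument is correct and is essentially the paper's proof. The two $c$-superdifferential inequalities at consecutive nodes, with the shared partners $y_i$ taken from $\spt\Gamma$, give exactly the summand of Proposition~\ref{prop: the direct bound expression}. Your summation by parts is the re-indexing into second differences of $c(\cdot,y_i)$ in Lemma~\ref{lem: michael curve approx}. The paper differs only in bookkeeping: it Taylor-expands along $\omega\subseteq\Omega\subseteq\X$ rather than on segments $[x_i,x_{i+1}]$, which may leave $\X$, and it works on the dense full-measure set $P_x(\Gamma)$ with $2d_H$ instead of assuming $\Y$ compact.

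The ``main obstacle'' you flag is already settled by your own summation by parts. It needs only $\|p(x_i)\|\le\|\nabla_x c\|_{\mathcal{C}^0(\X\times\Y)}$ and no continuity of $p$. Your fallback inequality also has the wrong sign: semiconcave potentials satisfy $(p(x)-p(x'))\cdot(x-x')\le C\|x-x'\|^{1+\alpha}$, so it only bounds each step from below and could not help.
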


\noindent Here, the Hölder space $\mathcal{C}^{1, \alpha}(\X)$ consists of all functions $f : \X \to \R$ continuously differentiable such that
\begin{equation*}
    \|f\|_{\mathcal{C}^0(\X)} + \| \nabla f\|_{\mathcal{C}^0(\X)} + [\nabla f]_{\X, \alpha} < +\infty,
\end{equation*}
where
\begin{equation*}
    \|f\|_{\mathcal{C}^0(\X)} := \sup_{x \in \X} \|f(x)\| \quad \text{ and } \quad [\nabla f]_{\X, \alpha} := \sup_{\substack{x_0, x_1 \in \X \\ x_0 \neq x_1}} \frac{\|\nabla f(x_0) - \nabla f(x_1)\|}{\|x_0 - x_1\|^\alpha}.
\end{equation*}
In particular, when $\X, \Y \subseteq \R^d$ are both compact, Theorem \ref{thrm: c 1 alpha cost Linfty bound} applies to the cost $c(x, y) = \|x - y\|^p$ for all $p \in [2, +\infty)$ with $\alpha = 1$, giving an exponent $1/2$ for $d_H(\spt \rho, \Omega)$ at small scales. When $p \in (1, 2]$, Theorem \ref{thrm: c 1 alpha cost Linfty bound} applies with $\alpha = p-1$ and exponent $1-1/p$ for $d_H(\spt \rho, \Omega)$, since the function $z \mapsto \|z\|^p$ is $\mathcal{C}^{1, p-1}_{\loc}$. (We provide a short proof due to \cite{mischler2024quantitative} of this fact in Appendix \ref{app: regularity of p cost}.) We do not know if these exponents are optimal. In Section \ref{sect: optimality of exponent in grid case}, we provide an example showing that, in general, we cannot expect a better exponent than $1$.

Bounded convex sets $\Omega \subseteq \X$ satisfy Assumption \ref{ass: domain assumption connexivity by bounded curvature arcs} with constant $C_\Omega = \diam \Omega \leq \diam \X$, where $\diam$ denotes the diameter of a set in $\R^d$. Thus, for $\X$ bounded, taking $\Omega = \conv \spt \rho$ the convex hull, we obtain the estimate
\begin{equation}
   \forall \rho \in \PP(\X),\, \forall \mu \in \PP(\Y), \quad \diam_{L^\infty}(\Phi_c(\rho, \mu)) \leq C(\X, \Y, c)\, d_H(\supp \rho, \conv \spt \rho)^{1-1/(\alpha + 1)}.
\end{equation}
All sets $\Omega \subseteq \R^d$ satisfying Assumption \ref{ass: domain assumption connexivity by bounded curvature arcs} are bounded. To show that $\Omega$ need not be bounded for a quantitative uniqueness result to hold, we present Theorem \ref{thrm: quant L^p moment bounds} below. We control the $L^q$ diameter of $\Phi_c(\rho, \mu)$ for $q \in [1, + \infty)$, for $\rho$ satisfying uniform $q/2$ moment bounds. For simplicity of exposition, we state and prove this only for quadratic cost $c(x, y) = \|x-y\|^2$. An analogue of this result would also hold for $c \in \mathcal{C}^{1, \alpha}_{\loc}(\R^d \times \R^d)$ satisfying growth conditions of the form
\begin{equation*}
    \| \nabla_x c\|_{\mathcal{C}^0(B_R \times \Y)},\, [\nabla_{x}c]_{B_{R} \times \Y, \alpha} \leq C(1 + R^s)
\end{equation*}
for some $C, s>0$ and $B_R = B_R(0) \subseteq \R^d$, but we do not prove this here. A point $z' \in \R^d$ is a centre of the star-shaped set $\Omega$ if for any $z \in \Omega$, we have $[z', z] \subseteq \Omega$.
\begin{theorem}
\label{thrm: quant L^p moment bounds}
    Let $c(x, y) =\|x-y\|^2$. Let $\Omega \subseteq \R^d$ be a star-shaped set with centre at $z' \in \R^d$, and let $\Y \subseteq \R^d$ be compact. Let $q \geq 1$. Then for each $M>0$ there exists a constant $C(M, z',  \Y) > 0$ such that for all $\rho \in \mathcal{P}_{2}(\R^d)$ with moment $\int \|x\|^{q/2} \di\rho \leq M$, and all $\mu \in \PP(\Y)$, $\Phi_c(\rho,\mu)$ is non-empty and
    \begin{equation}
        \diam_{L^{q}}(\Phi_c(\rho, \mu)) \leq C \left(d_H(\spt \rho, \Omega)^{1/2} + d_H(\spt \rho, \Omega)\right).
    \end{equation}
\end{theorem}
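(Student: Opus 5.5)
The plan is to compare two $\rho$-side potentials through a discrete chain of points of $\spt \rho$ that shadows a segment in $\Omega$ ending at the centre $z'$. First we reduce to $c$-concave representatives: any $\phi_i \in \Phi_c(\rho,\mu)$ can be replaced, without changing it $\rho$-a.e., by $\phi_i(x) = \min_{y \in \Y} \|x-y\|^2 - \psi_i(y)$, with $\psi_i$ its $c$-transform. Existence of optimisers is standard for $\rho \in \PP_2$ and $\Y$ compact. Fix any optimal plan $\gamma$. Since $\gamma$ is optimal for the primal problem, it is supported on the contact set $\{\phi_i(x)+\psi_i(y) = c(x,y)\}$ of every optimal pair simultaneously. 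This common contact is the key point.

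For $(x,y) \in \spt\gamma$ and any $x'$, the constraint $\phi_i(x') + \psi_i(y) \le c(x',y)$ combined with equality at $(x,y)$ gives
\begin{equation*}
\phi_i(x') - \phi_i(x) \le 2(x-y)\cdot(x'-x) + \|x'-x\|^2, \qquad i=0,1.
\end{equation*}

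For the chain, fix $x \in \spt\rho$ and let $\delta := d_H(\spt\rho,\Omega)$. Pick $z \in \Omega$ with $\|z-x\|\le\delta$. The segment $[z',z]$ lies in $\Omega$; subdivide it into $N \approx \|z-z'\|/h$ points $z_k$ at spacing $h$. For each $z_k$ choose $x_k \in \spt\rho$ with $\|x_k - z_k\| \le \delta$ (take $x_N = x$), and choose $y_k$ with $(x_k,y_k) \in \spt\gamma$. Set $p_k = 2(x_k - y_k)$, $\Delta_k = x_{k+1}-x_k$ and $f = \phi_0 - \phi_1$.

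To bound each increment of $f$, apply the inequality above to $\phi_0$ with base point $x_k$, and to $\phi_1$ with base point $x_{k+1}$ (moving in the reverse direction). This gives
\begin{equation*}
f(x_{k+1}) - f(x_k) \le (p_k - p_{k+1})\cdot \Delta_k + 2\|\Delta_k\|^2 .
\end{equation*}
Now write $\Delta_k = h e + \varepsilon_k$, where $e$ is the unit direction of the segment and $\|\varepsilon_k\| \le 2\delta$, and sum over $k$. The main part telescopes to $h(p_0 - p_N)\cdot e$, which is $O(h(1+\|x\|))$ because $\|p_k\| \le 2(\|x_k\| + \max_{\Y}\|y\|)$. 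The remaining terms are bounded by $N\,C(1+\|x\|)\,\delta + 2N(h+2\delta)^2$. With $N \approx \|z-z'\|/h$ this gives
\begin{equation*}
f(x) - f(x_0) \lesssim (1+\|x\|)\Bigl(h + \frac{\|z-z'\|\,\delta}{h}\Bigr) + \|z-z'\|\Bigl(h + \frac{\delta^2}{h}\Bigr).
\end{equation*}
Choosing $h = \sqrt{\delta}$, or $h$ of order $\delta$ when $\delta \ge 1$ so that at least one step is taken, yields $|f(x) - f(x_0)| \le C\,g(\|x\|)\,(\delta^{1/2} + \delta)$ for a polynomial weight $g$ depending on $z'$ and $\Y$. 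The reverse inequality follows by exchanging the roles of $\phi_0$ and $\phi_1$. The base point $x_0$ is within $\delta$ of the fixed point $z'$, so $l := f(x_0)$ serves as the constant in \eqref{eq: diameter definition}. Integrating $|f - l|^q$ against $\rho$ then gives the bound, provided $\int g(\|x\|)^q \di\rho$ is controlled by the moment hypothesis.

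The main obstacle is this last matching of the weight $g$ with the assumed $q/2$-moment. The naive choice of a uniform step $h$ produces a weight growing roughly like $(1+\|x\|)^{1}$ or worse, which would require a $q$-th moment. To recover the sharper growth I would try two things. First, choose $h$ depending on $\|z-z'\|$ so as to balance the telescoped term against the error terms. Second, use the better bound $\|p_k - p_{k+1}\| \lesssim \|x_k - x_{k+1}\| + \|y_k - y_{k+1}\|$ with $\Y$ bounded, so that only the bounded quantities $y_k$ enter the error terms. I expect that these together bring the weight down to order $(1+\|x\|)^{1/2}$, which is what the $q/2$-moment hypothesis requires.
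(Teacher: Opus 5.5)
There is a genuine gap at exactly the step you flag as the main obstacle, and your second remedy does not close it.

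Your overall structure matches the paper's. You build a chain in $\spt\rho$ shadowing the segment $[z',z]\subseteq\Omega$, apply one-sided superdifferential inequalities to $\phi_0$ and $\phi_1$ in opposite directions, telescope, use an $x$-dependent step, and integrate against $\rho$. Your first remedy is also what the paper does: it takes $n\approx\sqrt{(\|x-z'\|+d_H)/d_H}$ points, i.e.\ spacing $h\approx\sqrt{d_H\|z-z'\|}$.

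The trouble is what remains after your second remedy. Even if you use $\|p_k-p_{k+1}\|\lesssim\|\Delta_k\|+\|y_k-y_{k+1}\|$ in the error terms $(p_k-p_{k+1})\cdot\varepsilon_k$, two large terms are left:
\begin{itemize}
\item the $x$-part of the telescoped term, $2h(x_0-x_N)\cdot e\approx-2h\|z-z'\|$;
\item the quadratic sum, $2\sum_k\|\Delta_k\|^2\approx 2h\|z-z'\|$.
\end{itemize}
You estimate each of these separately in absolute value. That leaves a total of order $h(1+\|x\|)+\|z-z'\|\delta/h$, whose minimum over $h$ is of order $(1+\|x\|)\delta^{1/2}$ for large $\|x\|$. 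Integrating its $q$-th power needs a uniform $q$-th moment bound, not the assumed $q/2$-moment, and no choice of $h$ improves this.

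The missing idea is that these two terms cancel exactly, already in each increment. Since $p_k-p_{k+1}=-2\Delta_k+2(y_{k+1}-y_k)$,
\begin{equation*}
(p_k-p_{k+1})\cdot\Delta_k+2\|\Delta_k\|^2=2\langle y_{k+1}-y_k,\Delta_k\rangle .
\end{equation*}
This is the reduction to the bilinear cost $-\langle x,y\rangle$ that the paper makes at the outset (Section~\ref{sect: equiv quadratic and bilinear} and Remark~\ref{rmk: invariance under cost translations}). The $\|x\|^2$ part of the cost drops out of the sum in Proposition~\ref{prop: the direct bound expression}, leaving only the bounded gradient $\nabla_x c=-y$.

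With this identity the estimate goes through. Write $\Delta_k=he+\varepsilon_k$.
\begin{itemize}
\item The main part telescopes to $2h\langle y_N-y_0,e\rangle\le 2h\diam\Y$.
\item The error is at most $2\diam\Y\sum_k\|\varepsilon_k\|\le 4N\delta\diam\Y$.
\item Take $N\approx\|z-z'\|/h$ with $h\approx\sqrt{\delta\|z-z'\|}$, or a single step if $\|z-z'\|\lesssim\delta$.
\end{itemize}
This gives $|f(x)-f(x_0)|\lesssim\diam\Y\bigl(\sqrt{\delta\|x-z'\|}+\delta\bigr)$, whose $q$-th power is integrable precisely under the $q/2$-moment bound. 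In the paper this is Lemma~\ref{lem: michael curve approx} with $|\ddot\omega|([0,1])=0$ and vanishing H\"older term, giving $R_\Y\bigl(nd_H+(\|x-z'\|+d_H)/n\bigr)$.

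A minor point: you also need every $x_k\in\spt\rho$ to admit some $y_k$ with $(x_k,y_k)\in\spt\gamma$. This holds because $\Y$ is compact (Lemma~\ref{lem: projection is full support when mu bounded}).
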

It is not clear to us that the moment bounds appearing in Theorem \ref{thrm: quant L^p moment bounds} are necessary for quantitative uniqueness with unbounded source measures. These appear as a by-product of the proof, which involves integrating the bound derived for the proof of Theorem \ref{thrm: c 1 alpha cost Linfty bound}.

In applications, one is interested in regular measures $\rho$ for which uniqueness holds. If one does not know $\rho$ explicitly, one either works with a grid approximation supported on $\spt \rho \cap \varepsilon \mathbb{Z}^d$, or if one can obtain independent and identically distributed (i.i.d.) samples $\{X_i\}_{i=1}^N \sim \rho$, it is often convenient to work with the empirical measure
\begin{equation*}
    \rho_N = \frac{1}{N} \sum_{i=1}^N \delta_{X_i}
\end{equation*}
as an approximation for $\rho$. Theorem \ref{thrm: c 1 alpha cost Linfty bound} already applies directly to the grid case, but we also present Theorem \ref{thrm: quantitative uniqueness for measures on uniform grids} below, which shows that in some cases, we can attain a better exponent. In particular, the proof is specific to quadratic cost $c(x, y) = \|x-y\|^2$, whilst applying to potentially unbounded source measures, without any uniform moment bounds. It is optimal in its exponent, as a consequence of Section \ref{sect: optimality of exponent in grid case}.
\begin{theorem} 
\label{thrm: quantitative uniqueness for measures on uniform grids}
Let $c(x, y) = \|x-y\|^2$. Let $ I_1,..., I_d \subseteq \R $ be (potentially unbounded) intervals, let
\begin{equation*}
    \Omega = I_1 \times \cdots \times I_d \subseteq \R^d,
\end{equation*}
and let $\Y \subseteq \R^d$ be compact. Then for all $ \rho \in \PP_2(\R^d)$ satisfying $\supp \rho = \Omega \cap \varepsilon\Z^d$ for some $\varepsilon > 0$, and all $\mu \in \mathcal{P}(\Y)$, $\Phi_c(\rho,\mu)$ is non-empty and
\begin{equation*}
        \diam_{L^\infty}(\Phi_c(\rho, \mu)) \leq C \varepsilon.
\end{equation*}
where $C = 2d \diam(\Y)$.
\end{theorem}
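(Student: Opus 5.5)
The plan is a discrete version of the ``integrate the gradient along a path'' argument. On a grid, the optimal plan pins down a ``discrete gradient'' of every Kantorovich potential up to an error governed by the targets $y$. Cyclical monotonicity makes these error intervals nest consecutively along each axis line, so the errors telescope.

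\emph{Existence and setup.} Since $\rho \in \PP_2(\R^d)$, $\mu\in\PP(\Y)$ with $\Y$ compact, and $c$ is continuous with $|c(x,y)|\le 2\|x\|^2+2\|y\|^2$, standard Kantorovich duality gives an optimal plan $\gamma$ and optimal pairs. So $\Phi_c(\rho,\mu)\neq\emptyset$. Fix one optimal $\gamma$ and two optimal pairs $(\phi_0,\psi_0)$, $(\phi_1,\psi_1)$. By duality, $\phi_i(x)+\psi_i(y)=c(x,y)$ for $\gamma$-a.e.\ $(x,y)$, for both $i$. Every grid point $x\in\Omega\cap\varepsilon\Z^d$ is an atom of $\rho$, so $\phi_i$ is determined there and $\gamma(\{x\}\times\R^d)=\rho(\{x\})>0$. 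Since there are countably many atoms, we can choose for each such $x$ a point $y_x\in\Y$ such that:
\begin{itemize}
\item $(x,y_x)\in\spt\gamma$;
\item the equality $\phi_i(x)+\psi_i(y_x)=c(x,y_x)$ holds simultaneously for $i=0,1$.
\end{itemize}
Because $c$ is continuous, $\spt\gamma$ is $c$-cyclically monotone. For quadratic cost this gives $(x'-x)\cdot(y_{x'}-y_x)\ge 0$.

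\emph{Step along one axis.} Let $x'=x+\varepsilon e_k$ with both points in the grid. Testing the pointwise constraint $\phi_i(x')+\psi_i(y_x)\le c(x',y_x)$ against the equality at $(x,y_x)$, and symmetrically with $y_{x'}$, gives
\begin{equation*}
\varepsilon^2+2\varepsilon e_k\cdot(x-y_{x'})\ \le\ \phi_i(x')-\phi_i(x)\ \le\ \varepsilon^2+2\varepsilon e_k\cdot(x-y_x).
\end{equation*}
Define $D_i(x):=\bigl(\varepsilon^2+2\varepsilon e_k\cdot x-(\phi_i(x')-\phi_i(x))\bigr)/(2\varepsilon)$. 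Then both $D_0(x)$ and $D_1(x)$ lie in $[e_k\cdot y_x,\,e_k\cdot y_{x'}]$. This interval is correctly oriented by the monotonicity inequality.

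\emph{Telescoping along a line.} Take grid points $x_0,x_1,\dots,x_n$ on a line in direction $e_k$, with $x_{j+1}=x_j+\varepsilon e_k$. Since $\Omega$ is a product of intervals, all intermediate points lie in the grid. Let $h=\phi_0-\phi_1$. Then
\begin{equation*}
|h(x_n)-h(x_0)|\le 2\varepsilon\sum_j |D_0(x_j)-D_1(x_j)|\le 2\varepsilon\sum_j\bigl(e_k\cdot y_{x_{j+1}}-e_k\cdot y_{x_j}\bigr)\le 2\varepsilon\,\diam(\Y).
\end{equation*}
The key point is that the sum telescopes, so the bound is independent of $n$. This is what allows unbounded intervals $I_k$ and removes the need for moment bounds.

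\emph{Conclusion.} Any two grid points of $\Omega\cap\varepsilon\Z^d$ can be joined by at most $d$ axis-parallel segments inside the grid, changing one coordinate at a time; this uses the product structure. Hence $\sup h-\inf h\le 2d\varepsilon\diam(\Y)$ on $\spt\rho$. Choosing $l$ as a constant, for instance the midpoint of the range of $h$, gives $\|\phi_0-\phi_1-l\|_{L^\infty(\rho)}\le 2d\varepsilon\diam\Y$. This yields the stated bound with $C=2d\diam(\Y)$; in fact it yields half of it.

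\emph{Main obstacle.} The delicate step is ensuring that the same selection $y_x$ works for both potential pairs and also satisfies monotonicity. Handling this requires using a single fixed $\gamma$, the atomic structure of $\rho$, and the fact that $\spt\gamma$ itself, not merely a full-measure subset, is cyclically monotone.
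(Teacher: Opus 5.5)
Your proof is correct and is essentially the paper's argument. Your per-step interval for $\phi_i(x_{j+1})-\phi_i(x_j)$, whose length is $2\varepsilon\, e_k\cdot(y_{x_{j+1}}-y_{x_j})$, is exactly the summand in Proposition~\ref{prop: the direct bound expression}, and telescoping it along at most $d$ axis-parallel grid lines is the paper's constant-increment computation; you simply do it directly for $\|x-y\|^2$, where the $\varepsilon^2$ terms cancel, instead of passing through the bilinear cost. Your other departures are also valid: you use $\gamma$-a.e.\ equality at the atoms in place of $c$-concave representatives and $\spt\Gamma$, and you centre $l$ at the midpoint of the range of $h$, which does give the sharper constant $d\,\diam(\Y)$ (the cyclical monotonicity you invoke is not actually needed, since $D_0(x)$ and $D_1(x)$ already lie in the interval).
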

The conclusion of Theorem \ref{thrm: quantitative uniqueness for measures on uniform grids} holds (with a different constant) for a general class of potentially unbounded sets $\Omega \subseteq \R^d$ satisfying the combinatorial path condition Assumption \ref{ass: domain assumption for grid discretisation}, see Section \ref{ch: quant uniqueness}. Many other unbounded sets in $\R^d$ satisfy Assumption \ref{ass: domain assumption for grid discretisation}; our example was chosen for its simplicity and relevance to common applications.

In the empirical measure case, we control the expected diameter and concentration properties of $\Phi_c(\rho_N, \mu)$ for large $N$. This is a direct consequence of expectation and concentration bounds from \cite{reznikov2016covering} for the quantity
\begin{equation*}
    d_H(\spt \rho, \{X_i\}_{i=1}^n)= d_H(\spt \rho, \spt \rho_N),
\end{equation*}
applied directly to the conclusion of Theorem \ref{thrm: c 1 alpha cost Linfty bound}. We again consider general costs $c \in \mathcal{C}^{1, \alpha}$.
\begin{theorem}
\label{thrm: quant stochastic}
Let $\X, \Y \subseteq \R^d$, and suppose $c \in \mathcal{C}^{1, \alpha}(\X \times \Y)$ for some $\alpha \in (0, 1]$. Let $\rho$ be a probability measure on $\X$ such that
\begin{enumerate}[label=(\roman*)]
    \item $\spt \rho$ satisfies Assumption \ref{ass: domain assumption connexivity by bounded curvature arcs}.
    \item There exist constants $C, s, r_0>0$ such that for all $x \in \spt \rho$ and $r \leq r_0$,
    \begin{equation}
    \label{eq: ahlfors regularity}
        \rho(B_r(x)) \geq Cr^s.
    \end{equation}
\end{enumerate}
Let $\{X_i\}_{i=1}^N \sim \rho$ be i.i.d. random variables and let $\rho_N = \frac{1}{N} \sum_{i=1}^N \delta_{X_i}$ be the corresponding empirical measure. Then there exist constants $C_0, C_1, C_2, C_3, \beta_0 >0$ depending on $c, \X, \Y$, and $\rho$ such that for all $\mu \in \PP(\Y)$, $\Phi_c(\rho_N,\mu)$ is non-empty and
\begin{equation}
\label{eq: expectation bound diameter general case}
    \forall N \in \mathbb{N}, \quad\mathbb{E} \left[\diam_{L^\infty}(\Phi_c(\rho_N, \mu)) \right] \leq C_0  \left(\frac{ \log{N}}{N}\right)^{\frac{\alpha}{s(1+\alpha)}},
\end{equation}
and for all $\beta > \beta_0$
\begin{equation}
\label{eq: concentration bound for the diameter}
    \forall N \in \mathbb{N}, \quad\mathbb{P}\left[ \diam_{L^\infty}(\Phi_c(\rho_N, \mu)) \geq C_1 \left(\frac{\beta \log{N}}{N}\right)^{\frac{\alpha}{s(1+\alpha)}}\;\right] \leq C_2 N^{1-C_3 \beta}.
\end{equation}
\end{theorem}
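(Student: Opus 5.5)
The plan is to apply Theorem \ref{thrm: c 1 alpha cost Linfty bound} pathwise with $\Omega = \spt \rho$ and $\rho$ replaced by $\rho_N$. Since $\rho$ is supported on $\X$, almost surely $\spt \rho_N = \{X_i\}_{i=1}^N \subseteq \spt\rho \subseteq \X$, and $\rho_N \in \PP(\X)$. By hypothesis (i), $\Omega := \spt \rho$ satisfies Assumption \ref{ass: domain assumption connexivity by bounded curvature arcs} with a finite constant $C_\Omega$ depending only on $\rho$. The theorem therefore gives, almost surely and for every $\mu \in \PP(\Y)$, that $\Phi_c(\rho_N,\mu)\neq\emptyset$ and
\begin{equation*}
\diam_{L^\infty}(\Phi_c(\rho_N,\mu)) \leq C(1+C_\Omega)^2\left(D_N^{\alpha/(1+\alpha)} + D_N\right), \qquad D_N := d_H(\spt\rho,\{X_i\}_{i=1}^N).
\end{equation*}
Since $\{X_i\}\subseteq\spt\rho$, we have $D_N = \sup_{x\in\spt\rho} \min_i \|x-X_i\|$, the covering radius. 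Moreover $D_N \leq \diam \spt\rho<\infty$, because sets satisfying the Assumption are bounded. Hence $D_N \leq K D_N^{\alpha/(1+\alpha)}$ with $K = (\diam\spt\rho)^{1/(1+\alpha)}$, and the bound reduces to $\diam_{L^\infty} \leq C' D_N^{\alpha/(1+\alpha)}$.

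Next I will invoke the covering-radius estimates of \cite{reznikov2016covering}. For measures satisfying the lower Ahlfors-type bound \eqref{eq: ahlfors regularity} on a compact support, they give
\begin{equation*}
\mathbb{E}[D_N] \lesssim \left(\tfrac{\log N}{N}\right)^{1/s}, \qquad \mathbb{P}\left[D_N \geq c_1\left(\tfrac{\beta\log N}{N}\right)^{1/s}\right] \leq c_2 N^{1-c_3\beta}
\end{equation*}
for $\beta$ large. If a self-contained argument is preferred, the proof is short. Cover $\spt\rho$ by $M \lesssim r^{-s}$ balls of radius $r/2$ centred in the support; the count follows by a packing argument from \eqref{eq: ahlfors regularity}. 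Each such ball is missed by all samples with probability at most $(1-Cr^s)^N \leq e^{-CNr^s}$. If every ball is hit, then $D_N\leq r$. Choosing $r^s = \beta\log N/(CN)$ and taking a union bound gives $\mathbb{P}[D_N>r] \lesssim r^{-s}N^{-\beta} \lesssim N^{1-\beta}$.

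For \eqref{eq: concentration bound for the diameter}, I combine the pathwise bound with the tail bound. On the event $D_N < c_1(\beta\log N/N)^{1/s}$, we get $\diam_{L^\infty} < C'c_1^{\alpha/(1+\alpha)}(\beta\log N/N)^{\alpha/(s(1+\alpha))}$, and this fixes $C_1$. For \eqref{eq: expectation bound diameter general case}, since $t\mapsto t^{\alpha/(1+\alpha)}$ is concave, Jensen gives $\mathbb{E}[D_N^{\alpha/(1+\alpha)}] \leq (\mathbb{E} D_N)^{\alpha/(1+\alpha)}$, and the expectation bound on $D_N$ finishes. Alternatively, integrate the tail: split on the event above with a fixed $\beta>2/c_3$, and use the deterministic bound $D_N\leq\diam\spt\rho$ on the complement, which has probability $O(N^{1-c_3\beta}) = O(N^{-1})$ and is negligible. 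Small $N$, in particular $N=1$ where $\log N = 0$, is absorbed into the constants, using for $N\ge2$ the deterministic bound together with $\log N/N \geq c/N$.

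The main obstacle is checking that the hypotheses of \cite{reznikov2016covering} (compactness of the support, the regularity condition only for $r\leq r_0$, measurability of $D_N$) match ours. The packing step is where $r\le r_0$ matters: for $r>r_0$ one uses boundedness of the support instead. The remaining steps are bookkeeping of constants, which depend only on $c,\X,\Y,\rho$ as claimed.
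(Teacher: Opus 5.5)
Your proposal follows the paper's proof essentially verbatim: apply Theorem \ref{thrm: c 1 alpha cost Linfty bound} pathwise with $\Omega=\spt\rho$, absorb the linear term via $D_N\le(\diam\spt\rho)^{1/(1+\alpha)}D_N^{\alpha/(1+\alpha)}$, then use Jensen for the expectation bound and the covering-radius estimates of \cite{reznikov2016covering} for the concentration bound. One small correction: the case $N=1$ cannot be ``absorbed into the constants'', because the right-hand side vanishes there; it holds trivially instead, since $\rho_1=\delta_{X_1}$ is a single Dirac mass and so $\diam_{L^\infty}(\Phi_c(\rho_1,\mu))=0$.
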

Examples of measures to which Theorem \ref{thrm: quant stochastic} applies include:
\begin{itemize}
    \item Absolutely continuous $\rho$, with density uniformly bounded below with respect to the uniform measure on a bounded Lipschitz domain. Indeed, bounded Lipschitz domains satisfy Assumption \ref{ass: domain assumption connexivity by bounded curvature arcs} by Proposition \ref{prop: path regularity for various domains}, and $\rho$ satisfies \eqref{eq: ahlfors regularity} with $s=d$, since the restriction of the Lebesgue measure to bounded Lipschitz domains satisfies \eqref{eq: ahlfors regularity}.
    \item When $\spt \rho$ is a $\mathcal{C}^{1,1}$ compact connected submanifold of $\R^d$ with boundary, and $\rho$ is absolutely continuous with respect to the uniform surface measure on $\spt \rho$, with density uniformly bounded below. Indeed, Assumption \ref{ass: domain assumption connexivity by bounded curvature arcs} is satisfied for $\mathcal{C}^{1,1}$ compact connected submanifolds, and $\rho$ satisfies \eqref{eq: ahlfors regularity} with $s = \dim \spt \rho$, since the uniform measure of a bounded $s$-dimensional $\mathcal{C}^{1, 1}$ submanifold satisfies \eqref{eq: ahlfors regularity}.
\end{itemize}

\subsection{Characterisation of the set of Kantorovich potentials}
The main tool used to prove our quantitative uniqueness results is a novel characterisation of the optimal set of potentials $\Phi_c(\rho, \mu)$, inspired by the graph-based qualitative uniqueness result, Theorem \ref{thrm: uniqueness graph continuous case}. We sketch the idea below for the Brenier potentials associated with the bilinear cost $c_l(x, y) = -\langle x, y \rangle$, see Section \ref{ch: Kantorovich potentials} for precise definitions. The compatibility condition \eqref{intro: compatibility condition} between primal and dual optimisers says that
\begin{equation*}
    \Phi_{c_l}(\rho, \mu) =\{ \phi \text{ convex} : \spt \gamma \subseteq \partial \phi \text{ for all } \gamma \text{ optimal for } \eqref{intro: c OT problem}\}.
\end{equation*}
where $\partial\phi$ denotes the graph of the subgradient of the convex function $\phi$. Fix some $\gamma \in \Pi(\rho, \mu)$ an optimiser for \eqref{intro: c OT problem}, and take points $\{(x_i, y_i)\}_{i=0}^n \subseteq \spt \gamma$. For any $\phi \in \Phi_{c_l}(\rho, \mu)$, subgradient inequalities give that for any $i = 0,...,n-1$,
\begin{equation}
\label{eq: upper and lower bound subsdiff inequalities}
\langle x_{i+1} - x_i, y_i \rangle \leq  \phi(x_{i+1})- \phi(x_i) \leq \langle x_{i+1} - x_i, y_{i+1} \rangle.
\end{equation}
The lower bound corresponds to if the gradient of $\phi$ on the interval $[x_{i}, x_{i+1}] \subseteq \R^d$ was the component of $y_i$ in the direction $x_{i+1} -x_i$, whilst the upper bound corresponds to that of $y_{i+1}$, see Figure \ref{fig:upper and lower bounds via subgradient}.
\begin{figure}[ht]
    \centering
\begin{tikzpicture}[scale=0.9, >=Latex]

\def\xzero{0.0}
\def\xone{3.2}
\def\yzero{0.25} 
\def\yone{1}   

\tikzset{
  ax/.style={->, line width=0.7pt},
  bound/.style={dashed, line width=0.9pt},
  mid/.style={line width=1.1pt},
  dot/.style={circle, fill, inner sep=1.2pt},
}

\pgfmathdeclarefunction{philow}{1}{\pgfmathparse{\yzero*(#1-\xzero)}}
\pgfmathdeclarefunction{phiup}{1}{\pgfmathparse{\yone*(#1-\xzero)}}
\pgfmathdeclarefunction{phimid}{1}{\pgfmathparse{0.125*(#1 +1)*(#1 + 1)- 0.125}}

\pgfmathsetmacro{\philowxone}{philow(\xone)}
\pgfmathsetmacro{\phiupxone}{phiup(\xone)}
\pgfmathsetmacro{\phimidxone}{phimid(\xone)}

  \draw[ax] (-0.4,0) -- (4.2,0) node[below] {$x$};
  \draw[ax] (0,-0.4) -- (0,4.0) node[left] {$\phi(x)$};

  \draw (\xzero,0) ++(0,-0.08) -- ++(0,0.16) node[below=3pt] {$x_i$};
  \draw (\xone,0)  ++(0,-0.08) -- ++(0,0.16) node[below=3pt] {$x_{i+1}$};

  \draw[bound] (\xzero,0) -- (\xone,\philowxone);
  \node[below left] at (2,{philow(2)}) {$y_{i}$};

  \draw[bound] (\xzero,0) -- (\xone,\phiupxone);
  \node[above left] at (1.5,{phiup(1.25)}) {$y_{i+1}$};

  \draw[mid] plot[smooth, domain=\xzero:\xone] (\x,{phimid(\x)});

  \node[dot, label=above left:{$(x_i,\phi(x_i))$}] at (\xzero,0) {};
  \node[dot, label=right:{$(x_{i+1},\phi(x_{i+1}))$}] at (\xone,\phimidxone) {};

  \draw[line width=0.8pt] (\xone,\philowxone) -- (\xone,\phiupxone);
  \draw (\xone,\philowxone) ++(-0.10,0) -- ++(0.20,0);
  \draw (\xone,\phiupxone)  ++(-0.10,0) -- ++(0.20,0);
\end{tikzpicture}
    \caption{Interval of admissibility for $\phi(x_{i+1})- \phi(x_i)$.}
    \label{fig:upper and lower bounds via subgradient}
\end{figure}
Summing the inequalities \eqref{eq: upper and lower bound subsdiff inequalities}, we deduce
\begin{equation}
\label{eq: demonstration of quant argument for quadratic in intro}
    \sum_{i=0}^{n-1} \langle x_{i+1} - x_i, y_{i} \rangle\leq \phi(x_n) - \phi(x_0) \leq \sum_{i=0}^{n-1} \langle x_{i+1} - x_i, y_{i+1} \rangle.
\end{equation}
We define the quantity $\lambda(x, x')$ as the infimum of the right-hand side of \eqref{eq: demonstration of quant argument for quadratic in intro}, over all possible such pairs of points $\{(x_i, y_i)\}_{i=0}^n \subseteq \spt \gamma$ for any optimal plan $\gamma$, such that $x_0 = x'$ and $x_n = x$. Then for any $\phi \in \Phi_{c_l}(\rho, \mu)$,
\begin{equation}
\label{eq: intro admissible interval}
    \forall x, x' \in \spt \rho, \quad  \phi(x) - \phi(x') \in [-\lambda(x', x), \lambda(x, x')].
\end{equation}
Each constraint $\phi(x) - \phi(x') \leq \lambda(x, x')$ is a half space, supported by a hyperplane orthogonal to the signed linear form $\delta_{x'} - \delta_x$, a difference of Dirac masses. For each $x, x' \in \spt \rho$ we have two parallel such supporting hyperplanes, giving the admissible interval \eqref{eq: intro admissible interval}. The optimal set $\Phi_{c_l}(\rho, \mu)$ is the intersection of these half spaces. 

For any pair $\phi_0, \phi_1 \in \Phi_{c_l}(\rho, \mu)$, fixing some $x' \in \spt \rho$, we can assume $\phi_0(x') = \phi_1(x')$ up to choosing $l \in \R$ in the diameter definition \eqref{eq: diameter definition}. It follows that
\begin{equation*}
    \forall x \in \spt \rho, \quad |\phi_0(x) - \phi_1(x)| = \big| (\phi_0(x) - \phi_0(x')) - (\phi_1(x) - \phi_1(x'))\big| \leq \lambda(x, x') + \lambda(x', x).
\end{equation*}
Our quantitative uniqueness results are thus proven by showing that for any pair $x, x' \in \spt \rho$, the length of the interval \eqref{eq: intro admissible interval} is uniformly small. We do this by choosing good candidate sets of points $\{(x_i, y_i)\}_{i=0}^n$ to test in the infimum definition of $\lambda$.

\subsection{Related works}
\label{Sect: Related works}

The main result underpinning previous uniqueness theorems for the dual problem is \cite[Proposition 7.18]{santambrogio2015optimal}, whose proof also appears earlier as a remark in \cite[Remark 10.30]{villani2008optimal}. The fundamental assumptions are that $\spt \rho$ is the ``closure of a connected open set", and $\spt \mu$ is bounded. Sets which are the ``closure of a connected open set" are inherently locally full-dimensional, unlike the hypothesis of rectifiable connectivity appearing in Theorem \ref{thrm: uniqueness rect connected bounded}. In \cite[Corollary 4 + Lemma 8]{staudt2025uniqueness}, the authors show that for some costs, if $\spt \rho$ satisfies the ``closure of a connected open set" hypothesis, we can remove the hypothesis that $\spt \mu$ is bounded. Their proof essentially involves deducing that the only points in $\spt \rho$ which ``send mass towards infinity" under some optimal plan $\gamma \in \Pi(\rho, \mu)$ are those on the boundary $\partial \spt \rho$, see Remark \ref{rmk: spt mu bounded}.

For the fully discrete dual transport problem, recent works have given combinatorial conditions for uniqueness. In this case, uniqueness is understood in the language of linear programming and extreme points of convex polyhedra. In \cite[Proposition 3.5 (ii)/Corollary 3.14]{acciaio2025characterization}, the authors provide a discrete version of our graph-based result, Theorem \ref{thrm: uniqueness graph continuous case}, whose proof is intimately related to the older work \cite{balinski1984faces}, which studies the extreme points of the dual transport polyhedra.

Recent works \cite{staudt2025uniqueness, yang2023optimal} have partially adapted the discrete combinatorial results to the continuous setting, obtaining weaker forms of our Theorem \ref{thrm: uniqueness graph continuous case}. They use an asymmetric ``closure of a connected open set" style support hypothesis for each connected component of the supports of marginal measures rather than their full supports.

In this paper, we only consider the classical optimal transport problem, with no regularisation. For entropic optimal transport, the dual objective is strictly concave, so the uniqueness of entropic (Schrödinger) potentials always holds up to a constant without needing support connectedness hypotheses. Dual uniqueness for the quadratically regularised optimal transport problem has also been studied, in \cite{nutz2025quadratically}. Here, similar conditions and hypotheses are given as in the unregularised case.

\subsection{Open problems}
We leave the following natural questions open concerning the qualitative uniqueness of Kantorovich potentials.
\begin{openproblem}
\label{problem: unbounded target}
    For which costs can we remove the hypothesis in Theorem \ref{thrm: uniqueness rect connected bounded} that $\spt \mu$ is bounded?
\end{openproblem}
In the proof of Theorem \ref{thrm: uniqueness rect connected bounded}, the hypothesis that $\spt \mu$ is bounded is principally used to ensure that $\rho$-side Kantorovich potentials are locally Lipschitz. We do not know if this hypothesis can be removed in general, see Remark \ref{rmk: spt mu bounded}.
\begin{openproblem}
    Does Theorem \ref{thrm: uniqueness rect connected bounded} hold if we only suppose $\spt \rho$ is connected, not rectifiably connected?
\end{openproblem}
It is not clear if ``rectifiably connected" is the correct limiting hypothesis for uniqueness. For example, let $f: [0, 1] \to [0, 1]$ be a sample path of Brownian motion, and consider $\rho = (\id, f)_\#\text{Leb}|_{[0,1]}$, where $\text{Leb}$ is the Lebesgue measure. Then $\supp \rho$ is path connected, but every path connecting two points is locally of infinite length and nowhere differentiable, so Theorem \ref{thrm: uniqueness rect connected bounded} does not apply.

Regarding the quantitative uniqueness of Kantorovich potentials, interesting open questions include: Does some form of quantitative uniqueness result hold for unbounded target measures with common costs, such as $p$-costs? Is the exponent in Theorem \ref{thrm: c 1 alpha cost Linfty bound} optimal?

\subsection{Structure of the paper}

\begin{itemize}
    \item In Section \ref{ch: Kantorovich potentials}, we fix some conventions regarding Kantorovich potentials and clarify the precise sense in which our statements should be understood. We also recall the main results and tools from optimal transport that we will use throughout the paper.
    \item In Section \ref{ch: qual uniqueness}, we prove the two qualitative uniqueness results, Theorems \ref{thrm: uniqueness rect connected bounded} and \ref{thrm: uniqueness graph continuous case}.  We also provide two pedagogical examples regarding the (lack of) differentiability of Kantorovich potentials, and the (lack of) relation between primal and dual uniqueness.
    \item In Section \ref{ch: geometry of optimal set}, we present the half-space-based characterisation of $\Phi_c(\rho, \mu)$ for general cost, and develop the main machinery from which we will derive diameter bounds.
    \item In Section \ref{ch: quant uniqueness}, we prove the quantitative theorems, Theorems \ref{thrm: c 1 alpha cost Linfty bound}, \ref{thrm: quant L^p moment bounds}, \ref{thrm: quantitative uniqueness for measures on uniform grids} and \ref{thrm: quant stochastic}. We also provide an example which demonstrates that, in general, we cannot have quantitative uniqueness with an exponent better than $1$.
\end{itemize}

\section*{Acknowledgements}
\noindent This work was supported by a public grant from the Fondation Mathématique Jacques Hadamard. The author would like to thank his PhD supervisors Michael Goldman and Cyril Letrouit, for many stimulating discussions and suggestions throughout the preparation of this work.

\section{Kantorovich potentials}
\label{ch: Kantorovich potentials}

\subsection{Uniqueness in which space?}

Kantorovich potentials are often taken as either elements of $L^1(\rho) \times L^1(\mu)$, or $\mathcal{C}^0(\X) \times \mathcal{C}^0(\Y)$, for some $\X \supseteq \spt \rho$ and $\Y \supseteq \spt \mu$. In each of these spaces, different hypotheses on $\rho$, $ \mu$, and $c$ are required for strong duality, existence, and uniqueness for the dual problem \eqref{intro: c dual problem}. In this paper, we consider Kantorovich potentials as elements of $L^1$ in the sense of measurability and integrability. We will still work with pointwise defined functions rather than equivalence classes, since, depending on the choice of representatives defined on $\R^d \times \R^d$, a given pair $(\phi, \psi) \in L^1(\rho) \times L^1(\mu)$ may or may not satisfy the dual constraint \eqref{intro: dual constraint} on all $\R^d \times \R^d$. To guarantee that solutions exist to both the primal and $L^1$ dual problems, we make the following assumption on the cost, see \cite[Theorem 5.10]{villani2008optimal}.
\begin{assumption}[Cost regularity for primal and dual existence]
\hfill
\label{ass: existence hypotheses}
    \begin{enumerate}[label=(\roman*)]
        \item $c$ is lower semi-continuous.
        \item For all $(x, y) \in \R^d \times \R^d$, $|c(x, y)| \leq a(x) + b(y)$ for some real valued upper semi-continuous functions  $a \in L^1(\rho)$ and $ b \in L^1(\mu)$.
    \end{enumerate}
\end{assumption}
All our qualitative uniqueness results are thus statements of uniqueness $\rho$ and $\mu$ almost everywhere. In general, for $\X \supseteq \spt \rho$, there is no natural injection $\mathcal{C}^0(\X) \subseteq L^1(\rho)$, since the values of two $\phi_0, \phi_1 \in \mathcal{C}^0(\X)$ outside of $\spt \rho$ could be different, whilst belonging to the same $L^1(\rho)$ equivalence class. We do however have the injection $\mathcal{C}^0(\spt \rho) \subseteq L^1(\rho)$. Hence, our uniqueness results in $L^1$ do not, in general, imply uniqueness in $\mathcal{C}^0(\X)$ unless $\X = \spt \rho$.

\subsection{$c$-concavity, choosing good representatives}
In this subsection, we discuss why to prove uniqueness over all $L^1(\rho) \times L^1(\mu)$, it suffices to prove uniqueness over the smaller, more regular class of $c$-concave optimisers. Let $\overline \R = \R \cup \{- \infty\}$. Given $\phi, \psi : \R^d \to \overline \R$, we define their $(c, \spt \rho)$ and $(\overline c, \spt \mu)$ transforms
\begin{equation*}
    \phi^{c, \spt \rho}: \R^d \to \overline \R,\quad\quad \psi^{\overline c, \spt \mu} : \R^d \to \overline \R
\end{equation*}
by
\begin{equation}
\label{intro: c transf definition}
    \phi^{c, \spt \rho}(y) = \inf_{x \in \spt \rho} c(x, y) - \phi(x) ; \quad\quad \psi^{\overline c, \spt \mu}(x)= \inf_{y \in \spt \mu} c(x, y) - \psi(y).
\end{equation}
We refer to functions as $c$-concave if they are the $(c, \spt \rho)$ or $(\overline c, \spt \mu)$ transform of some function, and are not everywhere $- \infty$. We say $(\phi, \psi)$ are $(c, \spt \rho, \spt \mu)$ conjugates if they are each other's transform. Defined as such, a $c$-conjugate pair satisfies the dual admissibility condition \eqref{intro: dual constraint} on $\spt \rho \times \spt \mu$, but not necessarily on all $\R^d \times \R^d$.

Roughly speaking, all pairs of Kantorovich potentials are ``essentially $c$-concave", in the sense that given a pair of optimisers $(\phi, \psi)$ for \eqref{intro: c dual problem}, we can find a $(c, \spt \rho, \spt \mu)$-concave pair $(\tilde \phi, \tilde \psi)$ with $\phi = \tilde \phi$ $\rho$-a.e. and $\psi = \tilde \psi$ $\mu$-a.e. We formalise and prove this fact in Appendix \ref{ch: precisions on c concavity}, and clarify some of the subtleties with regards to defining the dual problem and $c$-transforms on different domains $\X \times \Y$ for sets $\X \subseteq \spt \rho$ and $\Y \supseteq \spt \mu$.

The important takeaway is that proving uniqueness $\rho$-a.e. and $\mu$-a.e. of optimisers to the dual problem \eqref{intro: c dual problem} is equivalent to proving uniqueness of $(c, \spt \rho, \spt\mu)$-concave optimisers. Throughout the rest of this paper, unless we say explicitly otherwise, we will always assume we are dealing with a $(c, \spt \rho, \spt \mu)$-concave pair of Kantorovich potentials.

\subsection{Duality, the compatibility condition}

We denote the set of optimisers for the primal problem \eqref{intro: c OT problem} by
\begin{equation*}
    \Gamma_c(\rho, \mu) :=\argmin_{\gamma \in \Pi(\rho, \mu)} \int_{\R^d \times \R^d} c(x, y) \di \gamma(x, y),
\end{equation*}
and set
\begin{equation*}
 \supp \Gamma_c(\rho, \mu) = \bigcup_{\gamma \in \Gamma_c(\rho, \mu)} \supp \gamma.
\end{equation*}
To ease notation, where clear from the context, we will sometimes just write $\spt \Gamma$. We emphasise that in general, this object depends on all three of $\rho, \mu$ and $c$. Kantorovich duality, \cite[Theorem 5.10]{villani2008optimal}, tells us that:
\begin{enumerate}[label=(\roman*)]
    \item Under Assumption \ref{ass: existence hypotheses}, the values of the primal and dual problems \eqref{intro: c OT problem} and \eqref{intro: c dual problem} are equal.
    \item For any $\gamma \in \Gamma_c(\rho, \mu)$ and any $(\phi, \psi)$ optimal for the dual problem \eqref{intro: c dual problem},
    \begin{equation}
    \label{eq: weak compatibility condition}
    \phi(x) + \psi(y) = c(x, y) \;\gamma-\text{a.e.}
    \end{equation}
    \item If for some admissible $\gamma$ and $(\phi, \psi)$, \eqref{eq: weak compatibility condition} holds, both are optimal for their respective problems.
\end{enumerate}
If we also assume that $c$ is continuous, then for any optimal $(c, \spt \rho , \spt \mu)$-concave pair $(\phi, \phi^{c, \spt \rho})$, we have the slightly stronger
\begin{equation}
    \label{intro: compatibility condition}
    \supp \gamma \subseteq \left\{ (x, y) \in \spt \rho \times \spt \mu : \phi(x) + \phi^{c, \spt \rho}(y) = c(x, y) \right\}.
\end{equation}
The set on the right-hand side of \eqref{intro: compatibility condition} is referred to as the graph of the $(c, \spt \rho, \spt \mu)$ superdifferential of $\phi$, denoted $\partial^c \phi$. The $c$-superdifferential can equivalently be characterised by $y \in \partial^c\phi(x)$ for some $x \in \spt \rho$ if and only if $y \in \spt \mu$ and
\begin{equation}
\label{eq: c superdiff}
    \forall z \in \spt \rho, \quad c(x, y) - \phi(x) \leq c(z, y) - \phi(z).
\end{equation}
Given $\rho, \mu, c$, we denote by $P_x(\Gamma)$, the projection $\proj_x$ of $\spt \Gamma_c(\rho, \mu)$ onto the first marginal
\begin{equation}
\label{eq: projection property closed operator}
   P_x(\Gamma): =  \proj_x(\spt \Gamma_c(\rho, \mu)).
\end{equation}
To ease notation, we do not write the dependence on $\rho, \mu$ and $c$ explicitly, but we remind the reader that in general, this set depends on all three objects. This set has full $\rho$ measure, and its closure is $\spt \rho$ so it is dense in $\spt \rho$. When $\spt \mu$ is bounded, $P_x(\Gamma) = \spt \rho$ due to the following lemma.
\begin{lemma}
\label{lem: projection is full support when mu bounded}
    Let $\rho$ and $\mu$ be probability measures on $\R^d$ with $\spt \mu$ bounded. Then for any $\gamma \in \Pi(\rho, \mu)$, $\proj_x(\spt \gamma)$ is closed and hence $\proj_x(\spt \gamma) = \spt \rho$.
\end{lemma}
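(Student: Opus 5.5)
To prove the lemma, we show that $\proj_x(\spt \gamma)$ is closed. We then identify this set with $\spt \rho$ by a density argument.

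\emph{Closedness.} Let $K := \overline{\spt \mu}$, which is compact since $\spt \mu$ is bounded (and in fact $\spt\mu$ is already closed). Because $\gamma$ has second marginal $\mu$, we have $\gamma(\R^d \times K^c) = \mu(K^c) = 0$. The set $\R^d\times K$ is closed, so $\spt \gamma \subseteq \R^d \times K$. Now take a sequence $x_n \in \proj_x(\spt\gamma)$ with $x_n \to x$, and choose $y_n$ such that $(x_n,y_n)\in\spt\gamma$. Since $y_n \in K$ and $K$ is compact, we may pass to a subsequence with $y_n \to y \in K$. The support $\spt\gamma$ is closed, so $(x,y) \in \spt \gamma$, and therefore $x \in \proj_x(\spt\gamma)$. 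This proves closedness. Equivalently, the projection restricted to $\R^d\times K$ is a closed map because $K$ is compact.

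\emph{Inclusion $\proj_x(\spt\gamma)\subseteq\spt\rho$.} Let $(x,y)\in\spt\gamma$ and let $U$ be any open neighbourhood of $x$. Then $U\times\R^d$ is a neighbourhood of $(x,y)$, so
\begin{equation*}
\rho(U)=\gamma(U\times\R^d)>0 .
\end{equation*}
Since this holds for every such $U$, we get $x\in\spt\rho$.

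\emph{Reverse inclusion.} The set $\spt\gamma$ has full $\gamma$-measure. Hence
\begin{equation*}
\rho\bigl(\proj_x(\spt\gamma)\bigr) \ge \gamma(\spt \gamma) = 1 .
\end{equation*}
This is legitimate because $\proj_x(\spt\gamma)$ is closed, hence Borel, and $\spt\gamma \subseteq \proj_x(\spt\gamma)\times\R^d$. So $\proj_x(\spt\gamma)$ is a closed set of full $\rho$-measure. The support $\spt\rho$ is the smallest closed set of full measure, so $\spt\rho\subseteq\proj_x(\spt\gamma)$. Combining the two inclusions gives equality.

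There is no real obstacle in this argument. The only points needing care are that $\spt\gamma\subseteq\R^d\times\overline{\spt\mu}$, which is what supplies the compactness used in the closedness step, and that the projection is measurable, which follows once closedness is known. These two facts are also the reason the lemma can fail without the boundedness assumption on $\spt\mu$: mass can escape to infinity in $y$ while $x_n\to x$.
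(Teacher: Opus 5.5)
Your proof is correct and follows the paper's argument: closedness comes from compactness of $\spt\mu$, extracting a convergent subsequence $y_n \to y$, and using that $\spt\gamma$ is closed. You also spell out two steps the paper leaves implicit. These are that $\spt\gamma \subseteq \R^d \times \spt\mu$, and the two inclusions giving $\proj_x(\spt\gamma) = \spt\rho$, for which the paper appeals to its earlier remark that the projection has full $\rho$-mass and is dense in $\spt\rho$.
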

\begin{proof}
    Take $x_n \in \proj_x(\spt \gamma)$ with $x_n \to x$, then there exists $y_n \in \spt \mu$ with $(x_n, y_n) \in \spt \gamma$. By compactness of $\spt \mu$, up to a subsequence $y_n \to y \in \spt \mu$ and hence $(x, y) \in \spt \gamma$ so that $x \in \proj_x(\spt \gamma)$.
\end{proof}

\subsection{Equivalence of pairwise and single-potential uniqueness}
\label{subsect: equivalence of pairwise and single potential unique}
The dual problem \eqref{intro: c dual problem} is an optimisation problem over $L^1(\rho) \times L^1(\mu)$, so one would expect uniqueness results to talk of the uniqueness of pairs of optimisers. We will not do so. This is because for any $c$, the $c$-transform is a $1$-Lipschitz operator with respect to the uniform norm;
\begin{equation*}
    \|\phi_0^{c, \spt \rho} - \phi_1^{c, \spt \rho}\|_{\mathcal{C}^0(\R^d)} \leq \|\phi_0 - \phi_1\|_{\mathcal{C}^0(\spt \rho)} \quad \forall \phi_0, \phi_1 : \R^d \to \R.
\end{equation*}
To see this, it suffices to observe
\begin{align*}
    \phi_0^{c, \spt \rho}(y) = \inf_{x \in \spt \rho} c(x, y) - \phi_0(x) \leq& \inf_{x \in \spt \rho} c(x, y) - \phi_1(x) + \|\phi_0 - \phi_1\|_{\mathcal{C}^0(\spt \rho)}\\
    =& \phi_1^{c, \spt \rho}(y) + \|\phi_0 - \phi_1\|_{\mathcal{C}^0(\spt \rho)},
\end{align*}
then reverse the roles played by each potential. It follows that to establish both qualitative uniqueness, as well as quantitative uniqueness in $L^\infty$ norm, it suffices to consider the $\rho$-side potentials.

\subsection{Equivalence between quadratic and bilinear costs}
\label{sect: equiv quadratic and bilinear}
The primal problems with quadratic cost $c_2(x, y) = \|x-y\|^2$ and bilinear cost $c_l(x, y) = - \langle x, y \rangle$ are equivalent. This is because for any $\gamma \in \Pi(\rho, \mu)$
\begin{equation}
   \int_{\R^d \times \R^d} \|x-y\|^2 \di \gamma =  \int_{\R^d} \|x\|^2 \di \rho + \int_{\R^d} \|y\|^2 \di \mu - 2 \int_{\R^d \times \R^d} \langle x, y \rangle \di \gamma,
\end{equation}
where the first two terms on the right are independent of the choice of $\gamma \in \Pi(\rho, \mu)$. By abuse of notation, when talking about $c_l$-potentials we instead will refer to their negatives, so that optimal $c_l$ potentials are convex functions, which are referred to as \textit{Brenier Potentials}. One can show that in the sense of Minkowski,
\begin{equation}
\label{eq: relation between quadratic and max corr potential sets}
    \Phi_{c_2}(\rho, \mu) = \|\cdot\|^2 - 2\Phi_{c_l}(\rho, \mu),
\end{equation}
see \cite[Proposition 2.1]{santambrogio2015optimal} for the idea. Thus, both qualitative and quantitative uniqueness results for the costs $c_2$ and $c_l$ are equivalent.

\section{Qualitative uniqueness}
\label{ch: qual uniqueness}

\subsection{Proof of Theorem \ref{thrm: uniqueness rect connected bounded}: Qualitative uniqueness with $\mathcal{C}^1$ cost}

To guarantee suitable regularity of $c$-concave functions, we will require the following assumptions on the cost.
\begin{assumption}[Cost regularity for uniqueness]
\label{ass: cost hypotheses uniqueness}
The functions $\{ x \mapsto c(x, y) \}_{y \in \spt \mu}$ are:
\begin{enumerate}[label=(\roman*)]
    \item Differentiable at each $x \in \R^d$.
    \item Locally equi-Lipschitz in $x$, i.e. for each compact $K\subset \R^d$,
    \begin{equation*}
        \|\nabla_x c\|_{\mathcal{C}^0(K \times \spt \mu)}< + \infty.
    \end{equation*}
\end{enumerate} 
\end{assumption}
\begin{proposition}
\label{prop: regularity of potentials given by cost}
    Let $\rho$ and $\mu$ be probability measures on $\R^d$, with $\spt \mu$ bounded. Suppose that $c \in \mathcal{C}^1(\R^d \times \R^d)$ and 
    \begin{equation}
       \forall (x, y) \in \R^d, \quad|c(x, y)| \leq a(x) + b(y) \label{eq: c bound by integrable functions in the chapter}
    \end{equation}
for some real-valued upper semi-continuous functions $a \in L^1(\rho)$ and $b \in L^1(\mu)$. Then $c$ satisfies both Assumptions \ref{ass: existence hypotheses} and \ref{ass: cost hypotheses uniqueness}. Consequently, for any pair of $(c, \spt \rho, \spt \mu)$-concave Kantorovich potentials $(\phi, \psi)$ on $\R^d \times \R^d$, $\phi$ is locally Lipschitz and everywhere finite.
\end{proposition}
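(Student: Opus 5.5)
The proof has two parts: first check the two assumptions on $c$, then derive the regularity of $\phi$ from the fact that it is an infimum of equi-Lipschitz functions.

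\emph{Assumption \ref{ass: existence hypotheses}.} Since $c\in\mathcal{C}^1(\R^d\times\R^d)$, it is continuous and in particular lower semi-continuous. The growth bound \eqref{eq: c bound by integrable functions in the chapter} is exactly item (ii).

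\emph{Assumption \ref{ass: cost hypotheses uniqueness}.} Differentiability of $x\mapsto c(x,y)$ is immediate from $c\in\mathcal{C}^1$. For local equi-Lipschitzness, fix a compact $K$. Because $\spt\mu$ is closed and bounded, $K\times\spt\mu$ is compact. The map $\nabla_x c$ is continuous, so it is bounded on this compact set, which gives $\|\nabla_x c\|_{\mathcal{C}^0(K\times\spt\mu)}<+\infty$. By the mean value theorem on a convex compact $K$ (for instance a ball), each $c(\cdot,y)$ with $y\in\spt\mu$ is then $L_K$-Lipschitz on $K$, with $L_K$ independent of $y$.

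\emph{Regularity of $\phi$.} By $c$-concavity,
\[
\phi(x)=\psi^{\overline c,\spt\mu}(x)=\inf_{y\in\spt\mu}\bigl(c(x,y)-\psi(y)\bigr).
\]
So $\phi$ is an infimum over $y\in\spt\mu$ of functions that are $L_K$-Lipschitz on each ball $K$. Such an infimum is $L_K$-Lipschitz on $K$ provided it is finite at one point of $K$, and hence finite everywhere on $K$. The infimum is always $<+\infty$, because $\psi$ is not identically $-\infty$: there is some $y_0$ with $\psi(y_0)>-\infty$, so $\phi(x)\le c(x,y_0)-\psi(y_0)<+\infty$. The key point, and the main obstacle, is to show that $\phi$ is never $-\infty$.

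By definition, $\phi$ is not everywhere $-\infty$, so there is some $x_0$ with $\phi(x_0)>-\infty$. Then
\[
-\psi(y)\ \ge\ \phi(x_0)-c(x_0,y)\quad\text{for all } y\in\spt\mu .
\]
The map $y\mapsto c(x_0,y)$ is continuous on the compact set $\spt\mu$, so $-\psi$ is bounded below on $\spt\mu$. Now take a ball $K$ containing $x_0$ and any $x\in K$. Since $c$ is continuous, $c(x,y)$ is bounded below on the compact set $K\times\spt\mu$. Combining the two lower bounds shows that $c(x,y)-\psi(y)$ is bounded below uniformly in $y\in\spt\mu$, so $\phi(x)>-\infty$. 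As $K$ can be any ball containing $x_0$, $\phi$ is finite on all of $\R^d$. It is therefore locally Lipschitz, being a finite infimum of locally equi-Lipschitz functions.
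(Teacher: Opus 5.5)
Your proof is correct and follows the paper's route: both verify the two assumptions directly from $c\in\mathcal{C}^1$ and the compactness of $K\times\spt\mu$, and both get local Lipschitz regularity of $\phi$ from the envelope argument for an infimum of locally equi-Lipschitz functions. The only difference is how you show $\phi$ is finite somewhere: the paper uses $\psi^{\overline c,\spt\mu}\in L^1(\rho)$ (Appendix B), whereas you use the ``not everywhere $-\infty$'' clause of $c$-concavity together with an upper bound on $\psi$ over the compact set $\spt\mu$. One small fix: the point $y_0$ with $\psi(y_0)>-\infty$ must be taken in $\spt\mu$, which holds because $\psi\in L^1(\mu)$.
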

\begin{proof}
    That $c \in \mathcal{C}^1$ satisfying \eqref{eq: c bound by integrable functions in the chapter} satisfies Assumption \ref{ass: existence hypotheses} is direct from our hypotheses. Since we assume that $\spt \mu$ is bounded, for any $K \subseteq \R^d$ compact we have $\|\nabla_x c\|_{\mathcal{C}^0(K \times \spt\mu)} <+ \infty$ so that Assumption \ref{ass: cost hypotheses uniqueness} holds also. The local Lipschitz property of potentials follows from standard envelope arguments; see \cite[Box 1.8]{santambrogio2015optimal}. It is not hard to show that if $\psi$ is optimal, then $\psi^{\overline{c}, \spt \mu} \in L^1(\rho)$, see Appendix \ref{ch: precisions on c concavity}. Thus, we cannot have $\psi^{\overline c, \spt \mu} \equiv - \infty$. Since $\psi^{\overline c, \spt \mu}$ is finite at at least one point, and is locally Lipschitz on $\R^d$, it is finite everywhere.
\end{proof}

\begin{proof}[Proof of Theorem \ref{thrm: uniqueness rect connected bounded}]
Let $\phi \in \Phi_c(\rho, \mu)$. Thanks to Proposition \ref{prop: selection principle for c concave representative}, we can assume without loss of generality that $\phi$ is $(\overline c, \spt \mu)$-concave so that $\phi$ is locally Lipschitz on $\spt \rho$ by Proposition \ref{prop: regularity of potentials given by cost}. Take $x', x \in \supp \rho$, and let
\begin{equation*}
    \omega : [0, 1] \to \spt \rho,\quad \quad \omega(0) = x',\; \omega(1) = x,
\end{equation*}
be a finite-length curve, taken with constant speed parametrisation so that it is Lipschitz. The function $\phi$ is Lipschitz on the compact set $\omega([0, 1]) \subseteq \spt \rho$, and so
 \begin{equation*}
     \tilde\phi : [0, 1] \to \R; \quad\tilde\phi(t) := \phi(\omega(t))
 \end{equation*}
is also Lipschitz as a composition of Lipschitz functions. By Rademacher's theorem, \cite[Chapter 3]{evans2025measure}, both $\tilde \phi$ and $\omega$ are differentiable $t$-a.e., so the intersection of these points has full Lebesgue measure on $[0, 1]$. Fix $t \in (0, 1)$ as such a differentiability point. Since $\spt \mu$ is bounded, by Lemma \ref{lem: projection is full support when mu bounded} there exists at least one $y \in \R^d$ such that $(\omega(t), y) \in \spt \Gamma$. By the compatibility condition \eqref{intro: compatibility condition} combined with a $c$-superdifferential inequality \eqref{eq: c superdiff} with $x= \omega(t)$, $z = \omega(t + h)$, we have
\begin{equation*}
    c(\omega(t), y) - \tilde\phi(t) \leq c(\omega(t+h), y) - \tilde\phi(t+h).
\end{equation*}
Rearranging, dividing by $h \neq 0$ and considering the limits $h \to 0^+, 0^-$ which exist by differentiability, we deduce
\begin{equation}
\label{eq: potential value along the curve}
    \dot{\tilde\phi}(t) = \langle \nabla_x c(\omega(t), y), \dot\omega(t)\rangle.
\end{equation}
Recall that $\phi \in \Phi_c(\rho, \mu)$ was arbitrary, and observe that the right-hand side of \eqref{eq: potential value along the curve} depends only on $\omega$, $t$, $c$ and $\Gamma_c(\rho, \mu)$. Thus, what we have shown is that the projection of the set 
\begin{equation*}
    \nabla_x c\Big(\omega(t), \partial^c \phi\big(\omega(t)\big)\Big)
\end{equation*}
in the direction $\dot\omega(t)$ is $[0, 1]$-Lebesgue a.e. a single value, and this value is the same for any $\phi \in \Phi_c(\rho, \mu)$. Hence given any two optimal potentials $\phi_0, \phi_1 \in \Phi_c(\rho, \mu)$, defining $\tilde \phi_i = \phi_i \circ \omega$, we have $\dot{\tilde \phi}_0(t) = \dot{\tilde \phi}_1(t)$ $t$-a.e. Thus, integrating along $[0, 1]$, we conclude that for all $x', x \in \spt \rho$,
\begin{equation*}
\phi_0(x) - \phi_0(x') = \phi_1(x) - \phi_1(x') \iff \phi_0(x) - \phi_1(x) = \phi_0(x') - \phi_1(x').
\end{equation*}
In other words, $\phi_0 - \phi_1$ is constant across $\spt \rho$, so that Kantorovich potentials are unique up to a constant in $L^1$ as required.
\end{proof}
\begin{remark}[Necessity of the hypothesis $\spt \mu$ bounded]
\label{rmk: spt mu bounded}
    We use the hypothesis that $\spt \mu$ is bounded on two separate occasions, both corresponding to essentially the same phenomena. The first occasion is to ensure that for $c \in \mathcal{C}^1$, Assumption \ref{ass: cost hypotheses uniqueness} holds, guaranteeing local Lipschitz regularity of $\rho$-side Kantorovich potentials. The second occasion is to guarantee that $\proj_x(\spt \gamma) = \spt \rho$ for any $\gamma \in \Pi(\rho, \mu)$.
    
    Both speak to the question of for which $x \in \spt \rho$, mass can be locally ``sent to infinity" by some $\gamma \in \Gamma_c(\rho, \mu)$, with this being related to the gradient of optimal potentials via the compatibility condition \eqref{intro: compatibility condition}. The points $x \in \spt \rho \setminus P_x(\Gamma)$ are those such that when $x_n \to x$ with $(x_n, y_n) \in \spt \Gamma$, we have $\|y_n\| \to \infty$. For some costs, including $p$-costs for $p >1$, in \cite[Section 5]{staudt2025uniqueness} the authors prove that for any $K \subseteq \Int \spt \rho$ compactly contained, the `` image" of all points under any optimal plan is locally uniformly bounded, restricting points where mass is being ``sent locally to infinity" to $\partial \spt \rho$. This is sufficient for them to establish uniqueness with $\spt \mu$ unbounded, under the closure of a connected open set hypothesis for $\spt \rho$. This argument does not directly apply if $\spt \rho$ is only rectifiably connected.
    
    As a consequence of the arguments of \cite{staudt2025uniqueness}  combined with ours for Theorem \ref{thrm: uniqueness rect connected bounded}, one can deduce uniqueness for the same costs as theirs with $\spt \mu$ unbounded and $\spt \rho$ connected by arcs which stay inside the interior of the convex hull of $\spt \rho$, and for which $\rho$ gives no mass to $\partial \conv \spt \rho$. Alternatively, using our quantitative arguments from Section \ref{ch: geometry of optimal set}, one can deduce uniqueness for $\rho, \mu$ unbounded probability measures on $\R^d$ if $\spt \rho$ is connected by continuous arcs such that $\dot \omega \in BV$, and $c \in \mathcal{C}^{1, \alpha}(\R^d \times \R^d)$. Due to the requirement that $c$ is globally $\mathcal{C}^{1, \alpha}$, this does not apply to any $p$-costs. Neither of these responses provides a complete answer to Open Problem \ref{problem: unbounded target}.
\end{remark}
\subsection{Examples: Non-differentiability, relation to primal uniqueness}

\begin{example}[Non differentiability of potentials everywhere inside the support]
The proof only used Rademacher's theorem to deduce that the one-dimensional function $\tilde \phi : [0, 1] \to \R$  was differentiable a.e. with respect to Lebesgue measure on $[0, 1]$. This does not mean that $\phi : \R^d \to \R$ is differentiable $\rho$ a.e. Theorem \ref{thrm: uniqueness rect connected bounded} still applies in cases where potentials are actually nowhere differentiable on $\spt \rho$, as demonstrated by the following example. On $\R^2$ with bilinear cost $c(x, y) = - \langle x , y \rangle$, let
    \begin{equation*}
        \rho = \mathcal{H}^1|_{\{0\} \times [0, 1]} \quad \text{ and } \quad \mu = \frac{1}{2}\mathcal{H}^1|_{\{-1\} \times [0, 1]} + \frac{1}{2}\mathcal{H}^1|_{\{1\} \times [0, 1]}.
    \end{equation*}
Then the unique optimal plan splits $\rho$'s mass horizontally at each point in $\spt \rho$, see Figure \ref{fig:uniqueness of gradient projection}.
\begin{figure}[ht]
        \centering
\begin{tikzpicture}[>=Latex,scale=3]
  \tikzset{
    rho/.style   ={line width=1.5pt, red!70, line cap=round},
    mu/.style    ={line width=1.5pt, blue!70, line cap=round},
    plan/.style  ={->, line width=0.9pt, gray!70, line cap=round},
  }

  \draw[very thin] (-1.15,0) -- (1.15,0);
  \draw[very thin] (0,-0.02) -- (0,1.08);

  \fill[red!10] (0,0) rectangle (-1,1);
  \fill[red!10] (0,0) rectangle ( 1,1);

  \foreach \y in {0.20,0.50,0.80}{
    \draw[plan] (-0.02,\y) -- (-0.78,\y);
    \draw[plan] ( 0.02,\y) -- ( 0.78,\y);
  }

  \draw[mu]  (-1,0) -- (-1,1);
  \draw[mu]  ( 1,0) -- ( 1,1);
  \draw[rho] ( 0,0) -- ( 0,1);
\end{tikzpicture}
\caption{The optimal transport plan between $\rho$ (middle) and $\mu$ (left and right).}
\label{fig:uniqueness of gradient projection}
\end{figure}
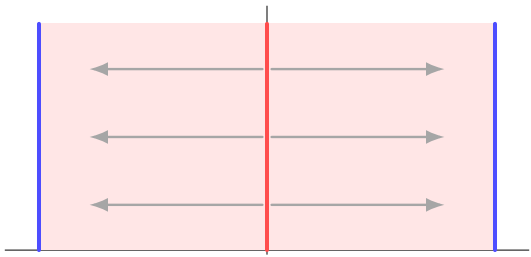
Thus, all Kantorovich potentials are nowhere differentiable on $\spt \rho$, their subgradients must contain at least two points by the compatibility condition \eqref{intro: compatibility condition}. Nonetheless, the hypotheses of Theorem \ref{thrm: uniqueness rect connected bounded} hold, so we have dual uniqueness. The proof implicitly deduces that the projection of $\partial\phi$ in the direction $\omega'(t)$ is a.e. unique.
\end{example}
\begin{example}[Independence of uniqueness for primal and dual]
Uniqueness of primal optimisers does not imply uniqueness of dual optimisers, nor vice versa. This is well known in the discrete setting, in the language of linear programming. In \cite[Remark 3.5]{letrouit2024gluing}, the authors provide an example of a non-unique dual for quadratic cost, while the primal problem has a unique solution.
Conversely, on $\R^2$ with quadratic cost, let
    \begin{equation*}
        \rho = \mathcal{H}^1|_{\{0\} \times [-1, 1]} \quad \text{ and } \quad \mu = \mathcal{H}^1|_{[1, 3] \times \{0\}}.
    \end{equation*}
    Here, every transport plan $\gamma \in \Pi(\rho, \mu)$ is optimal for quadratic cost. Yet Theorem \ref{thrm: uniqueness rect connected bounded} implies dual uniqueness.
\end{example}

\subsection{Proof of Theorem \ref{thrm: uniqueness graph continuous case}: Graph-based uniqueness}
Let $(\phi_0, \psi_0), (\phi_1, \psi_1)$ be two pairs of $(c, \spt \rho, \spt \mu)$-concave optimisers for the dual problem \eqref{intro: c dual problem}. The proof of Theorem \ref{thrm: uniqueness rect connected bounded} directly establishes that the restrictions $\phi_l|_{A_i}$ and $\psi_l|_{B_j}$ are unique up to a constant, and hence there exist $\{a_i\}_{i \in I}, \{b_j\}_{j \in J} \subseteq \R$ such that
\begin{equation*}
    (\phi_0 - \phi_1)|_{A_i} = a_i \quad \text{ and } \quad (\psi_0 - \psi_1)|_{B_j} = b_j.
\end{equation*}
We want to show that all the $a_i$ are equal to some $a \in \R$ and hence $\phi_0-\phi_1$ is constant across all $\spt \rho$.
Fix some $i, i' \in I$, let $x \in A_i$ and $x' \in A_{i'}$. Let $i=i_0, j_0, i_1, j_1,..., j_{n-1}, i_n = i'$ be a path between $A_i$ and $A_{i'}$ in the graph $G$ as defined in the theorem statement. For any $A \times B \subset \R^d$ measurable with $\gamma(A \times B) >0$, we have $(A \times B) \cap \spt \gamma \neq \emptyset$, since if $A \times B$ and $\spt \gamma$ were disjoint,
\begin{equation*}
    \gamma(A \times B) + \gamma(\spt \gamma) = \gamma(\spt \gamma \cup (A \times B))
\end{equation*}
which would imply $\gamma(A \times B) = 0$. It follows that there exist points
\begin{equation*}
    x=x_{0}, \hat x_{0}, y_{0}, \hat y_{0}, x_{1}, \hat x_{1},..., y_{{n-1}}, \hat y_{{n-1}}, x_{n}, \hat x_{n} = x'
\end{equation*}
with $x_{k}, \hat x_{k} \in A_{i_k}$ and $y_{k}, \hat y_{k} \in B_{i_k}$ such that
\begin{equation*}
    (\hat x_{k}, y_{k}), (x_{k+1}, \hat y_{k}) \in \spt \Gamma \quad \forall k = 0,..., n-1.
\end{equation*}
By the compatibility condition \eqref{intro: compatibility condition}, for $l = 0, 1$,
\begin{equation}
    \phi_l(\hat x_{k}) + \psi_l(y_k) = c(\hat{x}_k , y_k) \quad \text{ and }\quad \phi_l(x_{k+1}) + \psi_l(\hat y_k) = c(x_{k+1}, \hat y_k),
\end{equation}
so that
\begin{equation*}
    \phi_0(\hat x_{k}) - \phi_1(\hat x_{k}) =   \psi_1(y_k) -  \psi_0(y_k) \quad \text{ and } \quad \phi_0(x_{k+1}) - \phi_1(x_{k+1}) = \psi_1(\hat y_k) - \psi_0(\hat y_k).
\end{equation*}
It follows that
\begin{align*}
    a_i &= \phi_0(x_0) - \phi_1(x_0) = \phi_0(\hat x_0) - \phi_1(\hat x_0) = \psi_1(y_0) - \psi_0(y_0)\\ &= \psi_1(\hat y_0) - \psi_0( \hat y_0) = \phi_0(x_1) - \phi_1(x_1) = \cdots = \phi_0(\hat x_n) - \phi_1(\hat x_n) = a_{i'},
\end{align*}
and so $a_i = a_{i'} = -b_{j} = -b_{j'}$ for all $i, j \in I \times J$, which is precisely what we wanted to prove. \qedwhite
\begin{remark}
    In \cite{staudt2025uniqueness}, the authors establish uniqueness under the assumption that there are no subsets of indices $I' \subseteq I$ and $J' \subseteq J$ such that
\begin{equation}
    \label{intro:eq: german nondegeneracy}
    0<\rho( \cup_{i \in I'} A_i) = \mu( \cup_{j \in J'} B_j)<1. 
\end{equation}
This hypothesis is much stronger than ours. Under this assumption, for any $\gamma \in \Pi(\rho, \mu)$, the support graph $G_\gamma$ defined in the sense of Theorem \ref{thrm: uniqueness graph continuous case} is connected.
\end{remark}

\section{Geometry of the set of Kantorovich potentials}
\label{ch: geometry of optimal set}
\subsection{Characterisation of $\Phi_c(\rho,\mu)$ as an intersection of half spaces}

The dual problem \eqref{intro: c dual problem} has a continuous, linear objective and a closed, convex constraint. Hence, the set of optimisers is closed and convex. It is well known that such a set can be represented as the intersection of closed half-spaces; see, for example, \cite[Theorem 1.48]{barbu2012convexity}. In this section, we construct an explicit family of half-spaces for which this is the case. Our characterisation generalises some other implicit characterisations of the set of fully discrete dual optimisers given in \cite[Section 3.3]{acciaio2025characterization} and \cite[Theorem 3.5]{nutz2025quadratically}.

We repeat the calculations given in the introduction for a general cost $c$. Let $\{(x_i, y_i)\}_{i=0}^n \subseteq \spt \Gamma$ and take any $\phi \in \Phi_{c}(\rho, \mu)$. The compatibility condition \eqref{intro: compatibility condition} implies $y_i \in \partial^c \phi(x_i)$ and hence $c$-supergradient inequalities \eqref{eq: c superdiff} imply that for $i = 0,..., n-1$,
\begin{equation*}
    c(x_i, y_i) - \phi(x_i) \leq c(x_{i+1}, y_i) - \phi(x_{i+1}) \quad \text{ and } \quad  c(x_{i+1}, y_{i+1}) - \phi(x_{i+1}) \leq c(x_i, y_{i+1}) - \phi(x_i).
\end{equation*}
These imply upper and lower bounds on the relative values $\phi(x_{i+1}) - \phi(x_{i})$:
\begin{equation}
\label{eq: upper and lower bound c subsdiff inequalities}
c(x_{i+1}, y_{i+1}) - c(x_i, y_{i+1}) \leq  \phi(x_{i+1})- \phi(x_{i}) \leq c(x_{i+1}, y_i) - c(x_i, y_i).
\end{equation}
Summing, we deduce
\begin{equation}
\label{eq: c gradient upper and lower bounds}
    \sum_{i=0}^{n-1} \big[c(x_{i+1}, y_{i+1}) - c(x_i, y_{i+1})\big] \leq \phi(x_n) - \phi(x_0) \leq \sum_{i=0}^{n-1}\big[c(x_{i+1}, y_i) - c(x_i, y_i)\big].
\end{equation}
We recall the definition \eqref{eq: projection property closed operator} of $P_x(\Gamma) \subseteq \spt \rho$, which has full $\rho$ mass and whose closure is $\spt \rho$, with equality when $\spt \mu$ is bounded. Taking the infimum in \eqref{eq: c gradient upper and lower bounds} over all finite chains $\{(x_i, y_i)\}_{i=0}^n \subset \spt \Gamma$, we define
\begin{align*}
    &\lambda_{\rho, \mu, c} : P_x(\Gamma) \times P_x(\Gamma) \to \R\\
    &(x, x') \mapsto \inf_{\substack{\{(x_i, y_i)\}_{i=0}^n \subseteq \spt \Gamma \\ x_0 = x',\; x_n = x}} \sum_{i=0}^{n-1}\big[c(x_{i+1}, y_i) - c(x_i, y_i)\big].
\end{align*}
Then for any optimal potential $\phi \in \Phi_c(\rho, \mu)$,
\begin{equation*}
    \forall x, x' \in P_x(\Gamma), \quad -\lambda_{\rho, \mu, c}(x', x) \leq \phi(x) - \phi(x') \leq \lambda_{\rho, \mu, c}(x, x').
\end{equation*}
Taking a supremum on the left-hand side of \eqref{eq: c gradient upper and lower bounds} over points $\{(x_i, y_i)\}_{i=0}^n \subset \spt \Gamma$, we can recover an alternative definition of $-\lambda_{\rho, \mu, c}(x', x)$ by symmetry of our calculation. In particular, we also have
\begin{equation}
\label{eq: lambda alt definition}
    \lambda_{\rho, \mu, c}(x', x) = \inf_{\substack{\{(x_i, y_i)\}_{i=0}^n \subseteq \spt \Gamma \\ x_0 = x',\;  x_n = x}} \sum_{i=0}^{n-1} \big[c(x_i, y_{i+1}) - c(x_{i+1}, y_{i+1})\big].
\end{equation}
(In general, $\lambda$ is not symmetric in its arguments.) To ease notation, for the proofs, we will omit the dependence on $\rho, \mu$ and $c$.
\begin{proposition}[$\Phi$ as an intersection of half-spaces]
\label{prop: char of the optimal set}
Let $\rho$ and $\mu$ be probability measures on $\R^d$, suppose that $c$ is continuous and satisfies Assumption \ref{ass: existence hypotheses}. Then
    \begin{equation}
    \label{eq: characterisation of the dual set}
        \Phi_{c}(\rho, \mu) = \left\{ \phi \in L^1(\rho): \phi(x) - \phi(x') \leq \lambda_{\rho, \mu, c}(x, x') \quad \forall x, x' \in P_x(\Gamma) \right\}.
    \end{equation}
\end{proposition}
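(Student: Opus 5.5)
The inclusion ``$\subseteq$'' was essentially shown in the computation preceding the statement. Given $\phi \in \Phi_c(\rho,\mu)$, I pass to its $(c,\spt\rho,\spt\mu)$-concave representative from Appendix \ref{ch: precisions on c concavity}. The compatibility condition \eqref{intro: compatibility condition} gives $y_i \in \partial^c\phi(x_i)$ for every chain in $\spt\Gamma$. Summing the $c$-superdifferential inequalities \eqref{eq: upper and lower bound c subsdiff inequalities} and taking the infimum over chains yields $\phi(x)-\phi(x') \le \lambda(x,x')$ on $P_x(\Gamma)$. The only care needed is that the statement is about $L^1(\rho)$ classes. I will read the right-hand side of \eqref{eq: characterisation of the dual set} as ``$\phi$ admits a representative satisfying the inequalities on $P_x(\Gamma)$''. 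The concave representative provides such a representative.

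For ``$\supseteq$'', take $\phi \in L^1(\rho)$ (a pointwise representative) with $\phi(x)-\phi(x')\le\lambda(x,x')$ for all $x,x'\in P_x(\Gamma)$. Fix any $\gamma \in \Gamma_c(\rho,\mu)$; by Kantorovich duality (iii) it suffices to construct $\psi \in L^1(\mu)$ such that $\phi\oplus\psi\le c$ and $\phi(x)+\psi(y)=c(x,y)$ $\gamma$-a.e.

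I define $\psi := \phi^{c,P_x(\Gamma)}$, i.e. $\psi(y) = \inf_{z\in P_x(\Gamma)} c(z,y)-\phi(z)$. The key claim is that for $(x,y)\in\spt\Gamma$ we have $\psi(y) = c(x,y)-\phi(x)$, i.e. $\phi(z)-\phi(x)\le c(z,y)-c(x,y)$ for all $z\in P_x(\Gamma)$. This is exactly the hypothesis with the one-step chain $(x_0,y_0)=(x,y)$, $x_1=z$, where $y_1$ is any point with $(z,y_1)\in\spt\Gamma$ (which exists since $z\in P_x(\Gamma)$). That chain gives $\lambda(z,x)\le c(z,y)-c(x,y)$. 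Hence equality holds on $\spt\Gamma$, so it holds $\gamma$-a.e. The pointwise constraint $\phi(x)+\psi(y)\le c(x,y)$ then holds on $P_x(\Gamma)\times\R^d$ by definition of the transform.

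The main obstacle is admissibility in the precise sense of \eqref{intro: c dual problem}. The constraint must hold on all of $\R^d\times\R^d$, with $\phi,\psi$ integrable, but we only control $\phi$ on $P_x(\Gamma)$.
\begin{enumerate}[label=(\alph*)]
\item Since $P_x(\Gamma)$ has full $\rho$-measure, I modify $\phi$ to $-\infty$ (or to $\psi^{\overline c}$) off $P_x(\Gamma)$ without changing its $L^1(\rho)$ class. Then the constraint holds everywhere, and is vacuous where $\phi=-\infty$. Alternatively I invoke the selection results of Appendix \ref{ch: precisions on c concavity}, which show that admissibility on $\spt\rho\times\spt\mu$ for a conjugate pair suffices.
\item To get integrability, note that $\psi(y)=c(x,y)-\phi(x)$ on $\spt\gamma$. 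Integrating against $\gamma$ shows that $\psi$ is $\mu$-a.e. finite, and, using $|c|\le a+b$ and $\phi\in L^1(\rho)$, that $\psi$ is bounded above and below by integrable functions on a set of full $\gamma$-measure. Hence $\psi\in L^1(\mu)$, up to standard measurability of the infimal transform, which I handle by restricting to a countable dense subset of $P_x(\Gamma)$ via continuity of $c$, or by citing the appendix.
\end{enumerate}
Then $\int\phi\,\di\rho+\int\psi\,\di\mu=\int c\,\di\gamma$, so $(\phi,\psi)$ is optimal by duality, and $\phi\in\Phi_c(\rho,\mu)$.
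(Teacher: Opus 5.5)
Your proposal is correct and follows the paper's proof. The paper likewise sets $\phi=-\infty$ off $P_x(\Gamma)$, so that $\phi^{c,\spt\rho}$ coincides with your $\phi^{c,P_x(\Gamma)}$, uses exactly your one-step chain to obtain $c(x_0,y_0)\le\phi(x_0)+\phi^{c,\spt\rho}(y_0)$ on $\spt\Gamma$, and concludes via the compatibility condition; your integrability argument for $\psi$ fills in a detail the paper leaves implicit. One small correction to an aside: measurability of $\psi$ needs no countable-dense-subset argument, which would not be justified anyway since $\phi$ is not continuous, because $\psi$ is an infimum of functions continuous in $y$ and is therefore upper semicontinuous.
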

\begin{proof}
    For any potential $\phi \in \Phi_{c}(\rho, \mu)$, the inclusion $\subseteq$ is a direct consequence of the above calculations. For the converse, assume $\phi: P_x(\Gamma) \to \overline \R$ belongs to the right-hand side of \eqref{eq: characterisation of the dual set}. We change the values of $\phi$ to be $-\infty$ on the $\rho$-null set $\R^d \setminus P_x(\Gamma)$ (which we are allowed to do since $P_x(\Gamma)$ has full $\rho$ mass.) Then the potentials $(\phi, \phi^{c, \spt \rho})$ are optimal for the dual problem \eqref{intro: c dual problem}. To see this, consider all possible pairs $(x_0, y_0), (x_1, y_1) \in \spt \Gamma$. By definition of $\lambda$, we have
    \begin{equation}
       \forall x_1 \in P_x(\Gamma), \quad c(x_0, y_0) \leq c(x_1, y_0) - \phi(x_1) + \phi(x_0),
    \end{equation}
    and consequently after passing the infimum over $x_1 \in \spt \rho$,
    \begin{equation}
    \label{eq: compatability of rho transform on the support of Gamma}
         \forall (x_0,y_0) \in \spt \Gamma, \quad c(x_0, y_0) \leq \phi(x_0) + \phi^{c, \spt \rho}(y_0).
    \end{equation}
    By definition of the transform, $(\phi, \phi^{c, \spt \rho})$ is admissible for the dual problem on $\spt \rho \times \spt \mu$, and by \eqref{eq: compatability of rho transform on the support of Gamma}, the pair satisfy the compatibility condition \eqref{intro: compatibility condition}. They are thus an optimal pair for \eqref{intro: c dual problem} so that $\phi \in \Phi_c(\rho, \mu)$ as required.
\end{proof}
\begin{remark}[Connections to graph based uniqueness]
Suppose that $\{(x_i, y_i)\}_{i=0}^n\subseteq \spt \Gamma$, correspond to a path in the graph $\spt \Gamma$ from $x_0 = x'$, $x_n = x$, in the sense that we also have
\begin{equation*}
    (x_{i+1}, y_i) \in \spt \Gamma \quad \forall i =0,...,n-1.
\end{equation*}
Then both inequalities in \eqref{eq: upper and lower bound c subsdiff inequalities} are actually equalities, and thus so are those in \eqref{eq: c gradient upper and lower bounds}. We can deduce that $-\lambda(x',x) = \lambda(x, x')$, i.e. the interval of admissibility is a singleton. This recovers the graph-based uniqueness, Theorem \ref{thrm: uniqueness graph continuous case} (in the fully discrete case).  
\end{remark}

\subsection{Diameter bounds}

To obtain bounds on the diameter, it suffices to prove that the intervals of admissibility for $\phi(x') - \phi(x)$ are small for every pair $x, x' \in P_x(\Gamma)$.
\begin{proposition}
\label{prop: the direct bound expression}
Let $\rho$ and $\mu$ be probability measures on $\R^d$, suppose that $c$ is continuous and satisfies Assumption \ref{ass: existence hypotheses}. Fix some $x' \in P_x(\Gamma)$, then for any $\phi_0, \phi_1 \in \Phi_c(\rho, \mu)$ with $\phi_0(x') = \phi_1(x')$ and any $x \in P_x(\Gamma)$, we have
    \begin{equation}
    \label{eq: the direct bound expression}
        |\phi_1(x) - \phi_0(x)| \leq \inf_{\substack{\{(x_i, y_i)\}_{i=0}^n \subseteq \spt \Gamma \\ x_0 = x',\;  x_n = x}} \sum_{i=0}^{n-1} \big[c(x_{i+1}, y_i) - c(x_i, y_i) + c(x_i, y_{i+1}) - c(x_{i+1}, y_{i+1})\big].
    \end{equation}
\end{proposition}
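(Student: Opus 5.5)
The plan is to combine the two-sided bound \eqref{eq: c gradient upper and lower bounds}, valid for every optimal potential, with the normalisation $\phi_0(x') = \phi_1(x')$. Fix a chain $\{(x_i, y_i)\}_{i=0}^n \subseteq \spt \Gamma$ with $x_0 = x'$ and $x_n = x$. By the compatibility condition \eqref{intro: compatibility condition}, each $y_i \in \partial^c\phi_l(x_i)$ for $l = 0,1$. Note that $\spt\Gamma$ does not depend on the choice of $\phi_l$, so the same chain serves both potentials. Write $U$ for the right-hand sum in \eqref{eq: c gradient upper and lower bounds} and $L$ for the left-hand sum. Then for $l = 0,1$ both numbers $\phi_l(x) - \phi_l(x')$ lie in the interval $[L, U]$.

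Since $\phi_0(x') = \phi_1(x')$, we have
\[
\phi_1(x) - \phi_0(x) = \big(\phi_1(x) - \phi_1(x')\big) - \big(\phi_0(x) - \phi_0(x')\big),
\]
which is a difference of two points of $[L,U]$. Hence $|\phi_1(x) - \phi_0(x)| \le U - L$. Expanding,
\[
U - L = \sum_{i=0}^{n-1} \big[c(x_{i+1}, y_i) - c(x_i, y_i) + c(x_i, y_{i+1}) - c(x_{i+1}, y_{i+1})\big],
\]
which is exactly the summand in \eqref{eq: the direct bound expression}. Taking the infimum over all admissible chains gives the claim. A chain always exists: the trivial two-point chain $(x', y_0), (x, y_1)$ with $n = 1$ is available because $x, x' \in P_x(\Gamma)$.

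One subtlety needs attention. The inequalities \eqref{eq: upper and lower bound c subsdiff inequalities} require $\phi_l$ to be finite at the chain points and the $c$-superdifferential inequality \eqref{eq: c superdiff} to hold there. We therefore work with the $(c,\spt\rho,\spt\mu)$-concave representatives, as agreed in Section \ref{ch: Kantorovich potentials}. For these, \eqref{intro: compatibility condition} holds pointwise on $\spt\Gamma$, so the pointwise values at $x, x'$ are meaningful. Finiteness on $P_x(\Gamma)$ follows because $\phi_l(x_i) + \psi_l(y_i) = c(x_i,y_i)$ is finite, with neither term equal to $-\infty$.

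The main obstacle, such as it is, lies in this bookkeeping about representatives rather than in any estimate. Once it is settled, the argument is just the pairing of the upper bound for one potential with the lower bound for the other.
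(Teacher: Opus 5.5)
Your proof is correct and is essentially the paper's argument. The paper first bounds $|\phi_1(x)-\phi_0(x)|$ by $\lambda(x,x')+\lambda(x',x)$, allowing different chains for the upper and lower bounds, and then uses \eqref{eq: lambda alt definition} to bound this sum of infima by the infimum over a common chain; you fix a common chain from the start and take $U-L$, which reaches the same inequality, and your handling of $(c,\spt\rho,\spt\mu)$-concave representatives matches the convention adopted in Section \ref{ch: Kantorovich potentials}.
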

\begin{proof}
Let $\phi_0, \phi_1 \in \Phi_c(\rho, \mu)$. From Proposition \ref{prop: char of the optimal set}, we have 
\begin{equation*}
    \phi_0(x) - \phi_0(x'),\, \phi_1(x) - \phi_1(x') \in \big[-\lambda(x', x), \lambda(x, x')\big],
\end{equation*}
and hence
    \begin{align*}
        |\phi_1(&x) - \phi_0(x)| = |\phi_1(x) - \phi_1(x') -(\phi_0(x) - \phi_0(x'))| \leq \lambda(x, x') + \lambda(x', x) \\
        =& \inf_{\substack{\{(x_i, y_i)\}_{i=0}^n \subseteq \spt \Gamma\\ x_0 = x',\;  x_n = x}} \sum_{i=0}^{n-1} \big[c(x_{i+1}, y_i) - c(x_i, y_i)\big] + \inf_{\substack{\{(x_i, y_i)\}_{i=0}^n \subseteq \spt \Gamma \\ x_0 = x',\;  x_n = x}} \sum_{i=0}^{n-1} \big[c(x_i, y_{i+1}) - c(x_{i+1}, y_{i+1})\big]\\
        \leq& \inf_{\substack{\{(x_i, y_i)\}_{i=0}^n \subseteq \spt \Gamma \\ x_0 = x',\;  x_n = x}} \sum_{i=0}^{n-1} \big[c(x_{i+1}, y_i) -c(x_i, y_i) + c(x_i, y_{i+1}) - c(x_{i+1}, y_{i+1})\big]
    \end{align*}
    as required, where we used the alternative characterisation \eqref{eq: lambda alt definition} for $\lambda(x', x)$.
\end{proof}
The basis of all our quantitative results is Lemma \ref{lem: michael curve approx} below, which is based on the following intuition. Ignoring boundary terms and re-indexing the third and fourth terms of the sum \eqref{eq: the direct bound expression}, one views the terms as
\begin{equation}
\label{eq: second difference of the cost term}
    c(x_{i+1}, y_i) - c(x_i, y_i) - \big( c(x_i, y_i) - c(x_{i-1}, y_i)\big).
\end{equation}
Take a continuous curve $\omega : [0, 1] \to \R^d$, and choose $x_i = \omega(i/n)$ for $i = 0,...,n$ for large $n \in \mathbb{N}$. Then the expression \eqref{eq: second difference of the cost term} is a second difference of the function $ t \mapsto c(\omega(t), y_i)$. If $c$ and $\omega$ are both of class $\mathcal{C}^2$, these second differences are of order $O(1/n^2)$ and consequently the sum of $n$ such terms is $O(1/n)$. This idea extends more generally to $c$ of class $\mathcal{C}^{1, \alpha}$ and curves $\omega$ such that $\dot \omega \in BV([0, 1]; \R^d)$.
\begin{lemma}
\label{lem: michael curve approx}
Let $\X, \Y \subseteq \R^d$, and let $c \in \mathcal{C}^{1, \alpha}(\X \times \Y)$ for some $\alpha \in (0, 1]$. Let $\{x_i\}_{i=0}^n \subset \X$ and $\{y_i\}_{i=0}^n \subset \Y$. Let $\omega : [0, 1] \to \X$ with $\dot \omega \in BV([0, 1]; \R^d)$, and set $t_i = i/n$ for $i = 0,...,n$. Then
    \begin{align}
        \sum_{i=0}^{n-1} \big[c(x_{i+1}, &y_i) - c(x_i, y_i) + c(x_i, y_{i+1}) - c(x_{i+1}, y_{i+1})\big]\label{eq: 0th line?}\\
        \leq &2\| \nabla_x c\|_{\mathcal{C}^0(\X \times \Y)}\left(\|x_0 - \omega(t_0)\| + \|x_n - \omega(t_n)\| + 2 \sum_{i=1}^{n-1}\|x_i - \omega(t_i)\|\right) \label{eq: 1st line?}\\
        & + \frac{2\|\nabla_x c\|_{\mathcal{C}^0(\X \times \Y)} \|\dot\omega\|_{L^\infty}}{n} \label{eq: 2nd line?}\\
        &+ \frac{\|\nabla_x c\|_{\mathcal{C}^0(\X \times \Y)} |\ddot \omega |([0, 1])}{n}\label{eq: 3rd line?}\\
        &+\frac{[ \nabla_x c]_{\X \times \Y, \alpha}\|\dot \omega\|_{L^\infty}^{\alpha + 1}}{n^{\alpha}}\label{eq: 4th line?}.
    \end{align}
\end{lemma}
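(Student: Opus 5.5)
The plan is to compare the chain $\{x_i\}$ with the reference points $z_i := \omega(t_i)$. First replace each $x_i$ by $z_i$ in the sum, paying a Lipschitz error. Then estimate the resulting sum along the curve by a second-difference argument.

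\emph{Step 1 (replacing $x_i$ by $z_i$).} Write the summand as
\[
S_i(x) := c(x_{i+1}, y_i) - c(x_i, y_i) + c(x_i, y_{i+1}) - c(x_{i+1}, y_{i+1}).
\]
For fixed $y$, the map $x \mapsto c(x,y)$ is $L := \|\nabla_x c\|_{\mathcal{C}^0(\X\times\Y)}$-Lipschitz on $\X$. This uses the paths staying in $\X$, or simply the definition of the $\mathcal{C}^0$ norm of the gradient on segments; I will assume this is implicit, as in the intended application. So each of the four terms changes by at most $L\|x_j - z_j\|$ when $x_j$ is replaced by $z_j$.

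Count how often each index appears. An interior index $x_i$ with $1 \le i \le n-1$ occurs in four terms: twice in $S_{i-1}$ and twice in $S_i$. The endpoints $x_0$ and $x_n$ occur only twice. This gives exactly the error in line \eqref{eq: 1st line?}:
\[
2L\left(\|x_0 - z_0\| + \|x_n - z_n\| + 2\sum_{i=1}^{n-1}\|x_i - z_i\|\right).
\]

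\emph{Step 2 (reindexing along the curve).} It remains to bound $\sum_{i=0}^{n-1} S_i(z)$. Reindex the last two terms by shifting $i+1 \mapsto i$. The sum becomes
\[
\sum_{i=1}^{n-1}\Big[c(z_{i+1},y_i) - 2c(z_i,y_i) + c(z_{i-1},y_i)\Big] + B,
\]
where $B$ collects the boundary terms:
\[
B = c(z_1,y_0) - c(z_0,y_0) + c(z_{n-1},y_n) - c(z_n,y_n).
\]
Since $\|z_1 - z_0\| \le \|\dot\omega\|_{L^\infty}/n$ and likewise at the other end, each boundary pair is at most $L\|\dot\omega\|_{L^\infty}/n$. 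Hence $|B| \le 2L\|\dot\omega\|_{L^\infty}/n$, which is line \eqref{eq: 2nd line?}.

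\emph{Step 3 (the interior second differences).} For fixed $y = y_i$, set $g(t) = c(\omega(t), y)$. Then $g$ is Lipschitz with $\dot g = \langle \nabla_x c(\omega(t),y), \dot\omega(t)\rangle$ almost everywhere. Write the second difference as
\[
\int_{t_i}^{t_{i+1}} \dot g \, dt - \int_{t_{i-1}}^{t_i} \dot g \, dt = \int_{t_i}^{t_{i+1}} \big(\dot g(s) - \dot g(s - 1/n)\big)\, ds .
\]
Split the integrand as
\[
\dot g(s) - \dot g(s-1/n) = \langle \nabla_x c(\omega(s)) - \nabla_x c(\omega(s-1/n)), \dot\omega(s)\rangle + \langle \nabla_x c(\omega(s-1/n)), \dot\omega(s) - \dot\omega(s-1/n)\rangle .
\]

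\emph{Hölder part.} The first piece is bounded by $[\nabla_x c]_\alpha (\|\dot\omega\|_\infty/n)^\alpha \|\dot\omega\|_\infty$. Integrating over an interval of length $1/n$ and summing over at most $n$ indices gives $[\nabla_x c]_\alpha\|\dot\omega\|_\infty^{\alpha+1}/n^\alpha$, which is line \eqref{eq: 4th line?}.

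\emph{BV part.} The second piece is bounded by $L\,|\dot\omega(s) - \dot\omega(s-1/n)| \le L\,|\ddot\omega|\big((s-1/n, s]\big)$, using a good representative of $\dot\omega$. Integrating in $s$ over $[t_i, t_{i+1}]$ gives at most $(L/n)\,|\ddot\omega|\big((t_{i-1}, t_{i+1}]\big)$. Summed over $i$ this naively gives $2L|\ddot\omega|([0,1])/n$, since these windows overlap twice.

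\emph{Main obstacle.} The delicate point is obtaining the sharp constant $1$ in line \eqref{eq: 3rd line?} rather than $2$. To get it I will use Fubini more carefully:
\[
\int_{t_i}^{t_{i+1}} |\ddot\omega|\big((s-1/n, s]\big)\, ds = \int \Big|[t_i, t_{i+1}] \cap [r, r+1/n)\Big|\, d|\ddot\omega|(r).
\]
Summing over $i$, the weights $\sum_i |[t_i,t_{i+1}] \cap [r, r+1/n)|$ total at most $1/n$ for every $r$. This yields exactly $L|\ddot\omega|([0,1])/n$. Care with the choice of precise representative of the BV function $\dot\omega$ is the other technical point.
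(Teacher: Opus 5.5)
Your proposal is correct and follows essentially the same route as the paper. You substitute $\omega(t_i)$ for $x_i$ with the same occurrence count, isolate the two boundary pairs, and write the interior second differences as $\int_{t_i}^{t_{i+1}}(\dot g(s)-\dot g(s-1/n))\,ds$, split into a Hölder part and a BV part; attaching $\dot\omega(s)$ rather than $\dot\omega(s-1/n)$ to the Hölder piece is immaterial. Your Fubini step for the sharp constant $1$ is the dual form of the paper's argument, which substitutes $t=s+t_i$ with $s\in[0,1/n]$ and notes that for fixed $s$ the windows $(s+t_{i-1},s+t_i]$ are disjoint with union contained in $(0,1]$.
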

\begin{proof}
We rewrite \eqref{eq: 0th line?} as two sums, re-indexing the third and fourth terms:
\begin{align*}
    \sum_{i=0}^{n-1} \big[c(x_{i+1}, &y_i) - c(x_i, y_i) + c(x_i, y_{i+1}) - c(x_{i+1}, y_{i+1})\big] \\
    =&\sum_{i=0}^{n-1} \big[c(x_{i+1}, y_i) - c(x_i, y_i)\big] + \sum_{i=1}^{n} \big[c(x_{i-1}, y_{i}) - c(x_{i}, y_{i})\big]\\
    =& c(x_1, y_0) - c(x_0, y_0) + c(x_{n-1}, y_n) - c(x_n, y_n)\\
    &+\sum_{i=1}^{n-1} \big[c(x_{i+1},y_i) + c(x_{i-1}, y_i) - 2 c(x_i, y_i)\big].
\end{align*}
For each $i=0,...,n$ and any $y \in \Y$, we have
\begin{equation*}
    |c(x_i, y) - c(\omega(t_i), y)| \leq \|\nabla_x c\|_{\mathcal{C}^0(\X \times \Y)} \| x_i - \omega(t_i)\|,
\end{equation*}
so that approximating each $x_i$ by $\omega(t_i)$ gives the first line \eqref{eq: 1st line?}, and we are left to bound
\begin{align}
    & c(\omega(t_1), y_0) - c(\omega(t_0), y_0) + c(\omega(t_{n-1}), y_n) - c(\omega(t_n), y_n)\label{eq: curve bound c first line}\\
    &+\sum_{i=1}^{n-1} \big[c(\omega(t_{i+1}),y_i) + c(\omega(t_{i-1}), y_i) - 2 c(\omega(t_i), y_i)\big]. \label{eq: curve bound c second line}
\end{align}
The first two terms in \eqref{eq: curve bound c first line} are bounded by
\begin{equation*}
    |c(\omega(t_1), y_0) - c(\omega(t_0), y_0)| \leq \|\nabla_x c\|_{\mathcal{C}^0(\X \times \Y)} \|\omega(t_1) - \omega(t_0)\| \leq \frac{\|\nabla_x c\|_{\mathcal{C}^0(\X \times \Y)} \|\dot\omega\|_{L^\infty}}{n},
\end{equation*}
with the second pair of terms treated identically, giving \eqref{eq: 2nd line?}. We now concentrate on \eqref{eq: curve bound c second line}. For any $i = 1,...,n-1$, we have
\begin{align}
    c(\omega(t_{i+1}),y_i)& + c(\omega(t_{i-1}), y_i) - 2c(\omega(t_i), y_i)\nonumber\\
    =\Big(c(\omega(t_{i+1}),y_i)& - c(\omega(t_i), y_i) \Big) - \Big(c(\omega(t_i), y_i) - c(\omega(t_{i-1}), y_i)\Big)\nonumber\\
    =& \int_{t_i}^{t_{i+1}} \Big\langle\nabla_x c(\omega(t), y_i),\,  \dot \omega(t) \Big\rangle \di t - \int_{t_{i-1}}^{t_{i}} \Big\langle\nabla_x c(\omega(t), y_i),\,  \dot \omega(t) \Big\rangle \di t\nonumber\\
    =& \int_{t_i}^{t_{i+1}} \Big\langle\nabla_x c(\omega(t), y_i),\,  \dot \omega(t) \Big\rangle - \Big\langle \nabla_x c(\omega(t-1/n), y_i),\, \dot \omega(t-1/n) \Big\rangle \di t\nonumber\\
    =& \int_{t_i}^{t_{i+1}} \Big\langle\nabla_x c(\omega(t), y_i),\,  \dot \omega(t) - \dot \omega(t-1/n) \Big\rangle \di t \label{eq: first cost term to bound}\\
    &+ \int_{t_i}^{t_{i+1}} \Big\langle\nabla_x c(\omega(t), y_i) - \nabla_x c(\omega(t-1/n), y_i),\,\dot \omega(t-1/n) \Big\rangle \di t. \label{eq: second cost term to bound}
\end{align}
Summing the first term \eqref{eq: first cost term to bound}, we have
\begin{align*}
    \sum_{i=1}^{n-1} \int_{t_i}^{t_{i+1}} \Big\langle\nabla_x c(\omega(t), y_i),&\,  \dot \omega(t) - \dot \omega(t-1/n) \Big\rangle \di t\\ \leq& \|\nabla_x c\|_{\mathcal{C}^0(\X \times \Y)}\sum_{i=1}^{n-1} \int_{t_i}^{t_{i+1}} \| \dot \omega(t) - \dot \omega(t-1/n) \| \di t\\
    \leq& \|\nabla_x c\|_{\mathcal{C}^0(\X \times \Y)} \sum_{i=1}^{n-1} \int_{0}^{1/n} |\ddot \omega|\Big((t + t_{i-1}, t + t_i]\Big) \di t\\
    =& \|\nabla_x c\|_{\mathcal{C}^0(\X \times \Y)} \int_0^{1/n} |\ddot \omega|\Big((t, t + (n-1)/n]\Big) \di t\\
    \leq& \frac{\|\nabla_x c\|_{\mathcal{C}^0(\X \times \Y)} |\ddot \omega |([0, 1])}{n},
\end{align*}
which gives \eqref{eq: 3rd line?}. (Here we used that without loss of generality we can assume we are dealing with a right continuous representative of $\dot \omega$, so that for any $0\leq s < t \leq 1$ we have $\dot \omega(t) - \dot \omega(s) = \ddot \omega ( (s, t])$.) Finally, we treat \eqref{eq: second cost term to bound}. For any $y \in \Y$ and $t \in [1/n, 1]$, we have
\begin{align*}
 \|\nabla_x c(\omega(t), y) - \nabla_x c(\omega(t-1/n), y)\| \leq& [\nabla_x c]_{\X \times \Y, \alpha} \|\omega(t) - \omega(t-1/n)\|^{\alpha}\\ \leq& [\nabla_x c]_{\X \times \Y, \alpha} \left(\frac{\|\dot \omega\|_{L^\infty}}{n}\right)^\alpha
\end{align*}
and hence
\begin{align*}
       \sum_{i=1}^{n-1} \int_{t_i}^{t_{i+1}} &\Big\langle\nabla_x c(\omega(t), y_i) - \nabla_x c(\omega(t-1/n), y_i),\dot \omega(t-1/n) \Big\rangle \di t\\
        \leq& [\nabla_x c]_{\X \times \Y, \alpha}\sum_{i=1}^{n-1} \int_{t_i}^{t_{i+1}} \frac{\|\dot \omega\|_{L^\infty}^{\alpha + 1}}{n^{\alpha}} \di t \leq \frac{[\nabla_x c]_{\X \times \Y, \alpha}\|\dot \omega\|_{L^\infty}^{\alpha + 1}}{n^{\alpha}}.
\end{align*}
This gives the final term \eqref{eq: 4th line?} as required.
\end{proof}
\begin{remark}
\label{rmk: invariance under cost translations}
    The terms appearing in Proposition \ref{prop: the direct bound expression} are invariant under the replacement of the cost $c$ by $\tilde c(x, y) = c(x, y) + a(x) + b(y)$ for any $a \in L^1(\rho)$ and $b \in L^1(\mu)$ continuous. (This is the same phenomenon as in Section \ref{sect: equiv quadratic and bilinear}.) In the proof of Lemma \ref{lem: michael curve approx}, we only ever use the differentiability in $x$ of $c$, and equi-Hölder continuity of the $y \in \Y$ parametrised functions
\begin{equation*}
	c_y : \X \to \R; \quad c_y(x) = c(x, y),
\end{equation*}
so $[\nabla_x c]_{\X \times \Y, \alpha}$ in \eqref{eq: 4th line?} can be replaced by the asymmetric quantity
    \begin{equation}
\label{eq: asymetric Holder definition}
        \sup_{y \in \Y} [\nabla c_y]_{\X, \alpha}.
    \end{equation}
For quadratic cost $c(x, y) = \|x-y\|^2$, we can choose $a(x) = - \| x\|^2$ so that
    \begin{equation*}
        \nabla_x \tilde c = -2y
    \end{equation*}
so that \eqref{eq: asymetric Holder definition} is zero, with the final term \eqref{eq: 4th line?} disappearing.
\end{remark}
\section{Quantitative uniqueness}
\label{ch: quant uniqueness}
\subsection{Proof of Theorem \ref{thrm: c 1 alpha cost Linfty bound}: $L^\infty$ bound for $\mathcal{C}^{1, \alpha}$ cost}
Fix $\rho \in \PP(\X)$ and $\mu \in \PP(\Y)$. Since $c \in \mathcal{C}^{1, \alpha}(\X \times \Y)$ is uniformly bounded and continuous, it satisfies Assumption \ref{ass: existence hypotheses} so that we have existence. Let $\phi_0, \phi_1 \in \Phi_c(\rho, \mu)$ be two Kantorovich potentials. We fix some arbitrary reference point $x' \in P_x(\Gamma)$, then up to choosing $l \in \R$ in the diameter definition \eqref{eq: diameter definition}, we may assume that $\phi_0(x') = \phi_1(x')$. Let $x \in P_x(\Gamma)$ be arbitrary, then there exist $z', z \in \Omega$ such that
\begin{equation*}
    \|x - z\|, \|x' - z'\| \leq 2d_H,
\end{equation*}
where we denote $d_H = d_H(\spt \rho, \Omega)$. Here we use the density of $P_x(\Gamma) \subseteq \spt \rho$ so that it satisfies the same Hausdorff bound. The factor $2d_H$ rather than $d_H$ is for this reason, as $P_x(\Gamma)$ is not closed. We could take $d_H + \varepsilon$ for any $\varepsilon>0$. By Assumption \ref{ass: domain assumption connexivity by bounded curvature arcs}, let $\omega : [0, 1] \to \Omega$ be a curve from $\omega(0) = z'$ to $\omega(1) = z$ with $    \|\dot \omega\|_{L^\infty} + |\ddot \omega|([0, 1]) \leq C_\Omega$. Since $\omega(t) \in \Omega$ for all $t$, for any $n \geq 2$, there exist $\{x_i\}_{i=0}^n \subseteq P_x(\Gamma)$ with $x_0 = x', x_n = x$ and such that
\begin{equation}
\label{eq: points along the curve close to x i l infty c 1 apha proof}
    \forall i = 0,..., n; \quad \|\omega(i/n) - x_i\| \leq 2d_H.
\end{equation}
Take $\{y_i\}_{i=0}^n \subseteq \Y$ such that $\{(x_i, y_i)\}_{i=0}^n \subseteq \spt \Gamma$. Applying Lemma \ref{lem: michael curve approx} combined with \eqref{eq: points along the curve close to x i l infty c 1 apha proof}, we deduce that for any $n \geq 2$,
\begin{align}
    \nonumber
    \sum_{i=0}^{n-1} \big[c(x_{i+1},& y_i) - c(x_i, y_i) + c(x_i, y_{i+1}) - c(x_{i+1}, y_{i+1})\big]\\ &\leq C(\X, \Y, c) \left( n d_H + \frac{C_\Omega}{n} + \frac{C_\Omega^{\alpha+1}}{n^{\alpha}} \right) \\
    &\leq C(\X, \Y, c)(1+ C_\Omega)^2\left( n d_H + \frac{1}{n^{\alpha}} \right), \label{eq: bound n dh + 1/n alpha in c 1 alpha proof}
\end{align}
for some $C(\X, \Y, c)>0$ depending on $\|\nabla_x c\|_{\mathcal{C}^0(\X \times \Y)}$ and $[\nabla_x c]_{\X \times \Y, \alpha}$. If $d_H=0$, taking $n \to \infty$ we deduce the diameter is zero, i.e. we have uniqueness up to a constant. If $d_H = +\infty$, the bound we are trying to prove is trivial. Otherwise, optimising over $n \geq 2$, we choose
\begin{equation*}
    n = 1 + \left\lceil d_H^{-1/(1+\alpha)} \right\rceil
\end{equation*}
where $\lceil x \rceil$ denotes the smallest integer greater than or equal to $x \in \R$. Then
\begin{align*}
    n d_H + \frac{1}{n^\alpha} \leq& \left(d_H^{-1/(1+ \alpha)} + 2\right) d_H + \frac{1}{\left(d_H^{-1/(1+\alpha)}\right)^{\alpha}}\\ \leq&  2 d_H^{1- 1/(\alpha+1)} + 2 d_H.
\end{align*}
Consequently, returning to \eqref{eq: bound n dh + 1/n alpha in c 1 alpha proof}, we have
\begin{equation*}
    \sum_{i=0}^{n-1} \big[c(x_{i+1}, y_i) - c(x_i, y_i) + c(x_i, y_{i+1}) - c(x_{i+1}, y_{i+1})\big] \leq C(1+C_\Omega)^2 \left(d_H^{1-1/(\alpha + 1)} + d_H\right)
\end{equation*}
for some $C(\X, \Y, c) > 0$. Since $x \in P_x(\Gamma)$ was arbitrary, and $P_x(\Gamma)$ has full $\rho$ mass, we are done by Proposition \ref{prop: the direct bound expression}. \qedwhite

\subsection{Proof of Theorem \ref{thrm: quant L^p moment bounds}: $L^q$ bound for $\rho$ unbounded, quadratic cost}
By Section \ref{sect: equiv quadratic and bilinear}/ Remark \ref{rmk: invariance under cost translations}, it is sufficient to prove the result for the cost $c_l(x, y) = - \langle x, y \rangle$ and the associated Brenier potentials. Here
\begin{equation*}
    \nabla_x c(x, y) = -y \quad \text{ and } \quad \sup_{y \in \Y} [\nabla_x c]_{\X, 1} = 0.
\end{equation*}
Let $\rho \in \PP_2(\R^d)$ be such that $\int \|x\|^{q/2} \di\rho \leq M$, let $\mu \in \PP(\Y)$ be arbitrary. Then, since $\Y$ is bounded, Assumption \ref{ass: existence hypotheses} is satisfied so that $\Phi_{c}(\rho,\mu)$ is non-empty. Let $z' \in \Omega$ be a centre of the star-shaped domain $\Omega$. Then
\begin{align}
    \int_{\R^d} \| x - z'\|^{q/2} \di \rho(x) \leq& \max\left(2^{q/2-1}, 1\right) \left(\int_{\R^d} \|x\|^{q/2} \di\rho(x) + \|z' \|^{q/2} \right)\nonumber \\ 
    \leq& C_q(M + \|z'\|^{q/2}) := M'.\label{eq: M dash defn in starlike unbounded proof}
\end{align} 
Here $\Y$ is compact so that $P_x(\Gamma) = \spt \rho$ by Lemma \ref{lem: projection is full support when mu bounded}. Fix $x' \in \spt \rho$ such that
\begin{equation*}
    \|x' - z'\| \leq d_H,
\end{equation*}
where $d_H = d_H(\spt \rho, \Omega)<+ \infty$, otherwise the bound we want to prove is trivial. Let $\phi_0, \phi_1 \in \Phi_c(\rho, \mu)$ be two $\rho$-side Kantorovich potentials, up to choosing $l \in \R$ in the diameter definition \eqref{eq: diameter definition}, we may assume that $\phi_0(x') = \phi_1(x')$. For any $x \in \spt \rho$, there exists $z \in \Omega$ with
\begin{equation*}
    \|x - z\| \leq d_H.
\end{equation*}
We use the curve
\begin{equation*}
    \omega :[0, 1] \to \Omega; \quad \omega(t) = (1-t)z' + t z,
\end{equation*}
which is in $\Omega$ by the star-shaped assumption. We have $\| \dot \omega \|_{L^\infty} = \|z-z'\| \leq \|x -z' \| + d_H$ and $|\ddot \omega|([0, 1]) = 0$. For any $n \geq 2$, there exist $\{x_i\}_{i=0}^n \subset \spt \rho$ with $x_0 = x', x_n = x$ and such that
\begin{equation*}
    \forall i = 0,..., n; \quad \|\omega(i/n) - x_i\| \leq 2d_H.
\end{equation*}
Take $\{y_i\}_{i=0}^n \subseteq \Y$ such that $\{(x_i, y_i)\}_{i=0}^n \subseteq \spt \Gamma$. Then, by Lemma \ref{lem: michael curve approx} and Proposition \ref{prop: the direct bound expression} we have
\begin{equation}
\label{eq: raw bound in n moment bound proof}
    \forall x \in \spt \rho,\; \forall n \geq 2, \quad|\phi_0(x) - \phi_1(x)| \leq R_\Y\left(n d_H + \frac{\|x-z'\| + d_H}{n}\right),
\end{equation}
where $R_\Y>$ denotes the smallest radius such that $\Y \subset B_{R_\Y}(0)$. If $d_H=0$, taking $n \to \infty$ we deduce that the diameter is zero. Otherwise, optimising over $n \geq 2$, we choose $n = \max(2, \lceil n_* \rceil)$ where
\begin{equation*}
    n_* = \sqrt{\frac{\|x-z'\| + d_H}{d_H}}\geq 1.
\end{equation*}
Then
\begin{align*}
    n d_H + \frac{\|x-z'\| + d_H}{n} \leq& \left( \sqrt{\frac{\|x-z'\| + d_H}{d_H}}+ 1\right) d_H + (\|x-z'\| + d_H) \sqrt{\frac{d_H}{\|x-z'\| + d_H}}\\
    =& 2 \sqrt{d_H}\sqrt{\|x-z'\| + d_H} + d_H \leq 3\left(\sqrt{d_H}\sqrt{\|x-z'\|} + d_H\right).
\end{align*}
where we used in the last line that $\sqrt{a+ b} \leq \sqrt a + \sqrt b$ for $a, b \geq 0$. For $q \geq1$, we have the inequality
\begin{equation*}
\label{eq: q power inequality}
    a^{q} + b^{q} \leq (a + b)^{q} \leq 2^{q-1}(a^{q} + b^{q}) \quad \forall a, b \geq 0.
\end{equation*}
Consequently, returning to \eqref{eq: raw bound in n moment bound proof}, we have that for all $x \in \spt \rho$,
\begin{equation*}
|\phi_0(x) - \phi_1(x)|^{q} \leq R_\Y^{q} 3^q\left( \sqrt{d_H}\sqrt{\|x-z'\|} + d_H \right)^{q} \leq R_\Y^{q} 6^q \left(d_H^{q/2} \|x-z'\|^{q/2} + d_H^{q}\right).
\end{equation*}
Integrating with respect to $\rho$ and using \eqref{eq: M dash defn in starlike unbounded proof}, we deduce
\begin{equation*}
    \|\phi_0 - \phi_1\|_{L^{q}(\rho)}^{q} \leq 6^qR_\Y^{q} M' \left(d_H^{q/2} + d_H^{q}\right) \leq 6^qR_\Y^{q} M'\left(d_H^{1/2} + d_H\right)^{q}.
\end{equation*}
Taking the $q$-th root and using that $M'^{1/q} \leq 2(M^{1/q} + \|z'\|^{1/2}) \leq 2(1+M + \|z'\|^{1/2})$, we have
\begin{equation*}
    \|\phi_0 - \phi_1\|_{L^q(\rho)}\leq C(d_H^{1/2} + d_H)
\end{equation*}
For some constant $C(M,  z', \Y)>0$ as required. \qedwhite

\subsection{Proof of Theorem \ref{thrm: quantitative uniqueness for measures on uniform grids}: $L^\infty$ bound for measures on uniform grids, quadratic cost}
We first provide some calculations related to Lemma \ref{lem: michael curve approx}, which motivate an assumption on sets $\Omega \subseteq \R^d$ for which the conclusion of Theorem \ref{thrm: quantitative uniqueness for measures on uniform grids} holds. Let $\omega: [0, 1] \to \R^d$ be given by
\begin{equation*}
    \omega(t) = x + t v
\end{equation*}
for some $x, v \in \R^d$. We consider the cost $c_l(x, y) = - \langle x, y \rangle$, points $x_i = \omega(i/n)$ for $i = 0,...,n$, and $\{y_i\}_{i=0}^n \subseteq \Y \subseteq \R^d$. Here, $x_{i+1} - x_i = x_1 - x_0 = v/n$ for all $i = 0,...,n-1$, so that the quantity \eqref{eq: the direct bound expression} is
\begin{equation*}
\sum_{i=0}^{n-1} \langle x_{i+1} - x_i, y_{i+1} - y_i \rangle = \langle x_1 - x_0, \sum_{i=0}^{n-1} y_{i+1} - y_i \rangle = \langle x_1 - x_0, y_n - y_0 \rangle \leq \|x_1 - x_0\| \diam(\Y).
\end{equation*}
The fact that the sum is telescopic is precisely the fact that the second differences \eqref{eq: second difference of the cost term} are zero, this is a special case of Lemma \ref{lem: michael curve approx}. Now, for $\{x^k\}_{k=0}^N, \{v^k\}_{k=1}^N \subseteq \R^d$, take $N$ such affine curves affine curves
\begin{equation*}
    \omega_k : [0, 1] \to \R^d, \quad \omega_k(t) = x^k + t v^k,
\end{equation*}
such that $\omega_k(1) = \omega_{k+1}(0)$ for $k = 1,...,N-1$. Define the continuous curve $\omega : [0, 1] \to \R^d$ as their concatenation, so that
\begin{equation*}
    \forall k = 1,...,N, \; \forall t \in \left[ \frac{k-1}{N}, \frac{k}{N}\right], \quad \omega(t) = \omega_k(Nt - k +1).
\end{equation*}
We take $\{x_i\}_{i=1}^n$ to be successions of $n_k$ equidistant points along each $\omega_k$ in succession, so that for indices $i$ corresponding to points $x_i$ along the curve $\omega_k$, the difference $x_{i+1} - x_i = v_k/n_k$ is constant. Applying Lemma \ref{lem: michael curve approx} on each segment, we have
\begin{equation}
\label{eq: grid bound calcul 2}
    \sum_{i=0}^{n-1} \langle x_{i+1} - x_i, y_{i+1} - y_i \rangle \leq \sum_{k=1}^N \frac{\diam \Y \|v_k\|}{n_k} \leq N \diam \Y \max_{i=0}^n\|x_{i+1} - x_{i} \|
\end{equation}
To turn the above into a diameter bound, we ask that any two grid points in $\Omega \cap \varepsilon \mathbb{Z}^d$ can be joined by a path that follows a uniformly bounded number of straight grid directions, with step size $O(\varepsilon)$.
\begin{assumption}[Finite-turn grid connectivity]
\label{ass: domain assumption for grid discretisation}
Suppose $\Omega \subseteq \R^d$ is such that there exist $N(\Omega) \in \mathbb{N}$ and $S(\Omega)>0$ with the following property: For all $\varepsilon> 0$ and all $z, z' \in \Omega \cap \varepsilon \mathbb{Z}^d$, there exists a grid path $\{x_i\}_{i=0}^n \subseteq \Omega \cap \varepsilon \mathbb{Z}^d$ with $x_0 = z'$ and $x_n = z$, such that:
\begin{enumerate}[label=(\roman*)]
    \item $x_{i+1} - x_{i}$ is piecewise constant throughout $i = 0,..., n-1$, changing at most $N$ times.
    \item The step size is uniformly bounded by $\|x_{i+1} - x_i\| \leq S \varepsilon$.
\end{enumerate}
\end{assumption}
\begin{proposition}
    Let $\Omega = I_1 \times \cdots I_d \subseteq \R^d$ for (potentially unbounded) intervals $I_1,...,I_d \subseteq \R$. Then $\Omega$ satisfies Assumption \ref{ass: domain assumption for grid discretisation} with $N(\Omega) = d$ and $S(\Omega) = 1$.
\end{proposition}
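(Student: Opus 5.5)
The plan is to build the grid path explicitly by moving one coordinate at a time, a "staircase" or Manhattan path. Fix $\varepsilon>0$ and two grid points $z' = (z'_1,\dots,z'_d)$ and $z=(z_1,\dots,z_d)$ in $\Omega\cap\varepsilon\Z^d$. Start at $z'$. In stage $k=1,\dots,d$, change only the $k$-th coordinate from $z'_k$ to $z_k$, in steps of $\pm\varepsilon e_k$, where $e_k$ is the $k$-th standard basis vector. Concretely, the points visited during stage $k$ are
\begin{equation*}
(z_1,\dots,z_{k-1}, s, z'_{k+1},\dots,z'_d), \qquad s \in [\min(z_k',z_k),\max(z_k',z_k)]\cap\varepsilon\Z,
\end{equation*}
traversed monotonically from $s=z'_k$ to $s=z_k$. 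Stages with $z_k=z'_k$ contribute no steps and are skipped. After stage $d$ the path ends at $z$.

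Three properties then need checking.

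\textbf{Membership in the grid.} Every point has coordinates equal to some $z_j$, some $z'_j$, or some $s\in\varepsilon\Z$. All of these lie in $\varepsilon\Z$, so every point of the path is in $\varepsilon\Z^d$.

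\textbf{Membership in $\Omega$.} This is where the product structure is used. Each coordinate of a path point lies in the corresponding interval: $z_j, z'_j\in I_j$, and the intermediate value $s$ lies between $z'_k$ and $z_k$, both of which are in $I_k$. Since intervals are convex, $s\in I_k$, and this holds regardless of whether $I_k$ is bounded or open or closed. So every point lies in $I_1\times\cdots\times I_d=\Omega$.

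\textbf{Steps and turns.} Each step is $\pm\varepsilon e_k$, so its norm is exactly $\varepsilon$, which gives $S=1$. Within a stage the increment $x_{i+1}-x_i$ is constant, equal to $\operatorname{sign}(z_k-z'_k)\,\varepsilon e_k$. The increment can therefore only change at the transitions between at most $d$ nonempty stages. That is at most $d-1\le d$ changes, which gives $N=d$.

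There is no real obstacle here. The only point needing care is the convention for counting "changes" in Assumption \ref{ass: domain assumption for grid discretisation}. Either reading, counting the $d$ constant pieces or counting the $d-1$ switches between them, is bounded by $d$, so the stated constant $N(\Omega)=d$ holds in both cases. The degenerate case $z=z'$ is handled by taking the trivial path with $n=0$.
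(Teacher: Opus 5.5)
Your proof is correct and uses the same construction as the paper: a coordinate-by-coordinate staircase path with steps $\pm\varepsilon e_k$, giving $S=1$ and at most $d$ constant-increment pieces. You also spell out the membership checks (grid points, and staying in $\Omega$ by convexity of each $I_k$) and the counting convention for the turns, which the paper leaves implicit.
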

\begin{proof}
We simply follow $d$ successive lines in Cartesian directions in the grid. We connect any two points $z, z' \in \Omega \cap \varepsilon \mathbb{Z}^d$ using $\{x_i\}_{i=0}^n$, $x_0 = z'$, $x_n = z$ such that denoting $\{e_j\}_{j=1}^d$ the standard orthonormal frame of $\R^d$, $x_{i+1} - x_i \in \varepsilon \{\pm e_j\}_{j=1}^d$, so that $S=1$, and we only need to follow the concatenation of at most $N= d$ different affine grid paths.
\end{proof}
\begin{proof}[Proof of Theorem \ref{thrm: quantitative uniqueness for measures on uniform grids}]
We prove that the conclusion of Theorem \ref{thrm: quantitative uniqueness for measures on uniform grids} holds (with a different constant) for any $\Omega \subseteq \R^d$ satisfying Assumption \ref{ass: domain assumption for grid discretisation}.  Let $\phi_0, \phi_1 \in \Phi_c(\rho, \mu)$, then up to choosing $l \in \R$ in the diameter definition \eqref{eq: diameter definition}, we may assume $\phi_0(x') = \phi_1(x')$ for some $x' \in \spt \rho$. Let $x \in \spt \rho$ be arbitrary, take $\{x_i\}_{i=0}^n \subseteq \spt \rho$ a path from $x'$ to $x$ satisfying the hypotheses of Assumption \ref{ass: domain assumption for grid discretisation}, and take $\{y_i\}_{i=0}^n \subseteq \Y$ with $\{(x_i, y_i)\}_{i=0}^n \subseteq \spt \Gamma$. Then
\begin{equation*}
    \max_{i=0}^{n-1} \| x_{i+1} - x_{i}\| \leq S \varepsilon
\end{equation*}
and hence the bound \eqref{eq: grid bound calcul 2} becomes
\begin{equation*}
    \sum_{i=0}^{n-1} \langle x_{i+1} - x_i, y_{i+1} - y_i \rangle \leq N S \varepsilon \diam(\Y).
\end{equation*}
Hence, by Proposition \ref{prop: the direct bound expression}, Theorem \ref{thrm: quantitative uniqueness for measures on uniform grids} holds with constant $C(\Omega, \Y) = N(\Omega) S(\Omega) \diam \Y >0$. (For the Quadratic Kantorovich potentials, the constant is doubled, due to the relation \eqref{eq: relation between quadratic and max corr potential sets}.)
\end{proof}

\subsection{Example: Optimality of the exponent $1$ for the grid case}
\label{sect: optimality of exponent in grid case}
The following example demonstrates that, in general, we cannot have quantitative uniqueness with a better exponent than $1$ in Theorem \ref{thrm: c 1 alpha cost Linfty bound}, and consequently Theorem \ref{thrm: quantitative uniqueness for measures on uniform grids} is sharp in its exponent. Let $\Omega = \X = \Y= [0, 1]^d$, and for $n \in \mathbb{N}$, let $\mathcal{G}_n = [0, 1]^d \cap \left( \frac{1}{n} \mathbb{Z}^d \right)$. Define the discrete probability measures
\begin{equation*}
    \rho_n := \frac{1}{(n+1)^d}\sum_{x \in \mathcal{G}_n}  \delta_x,\quad \quad \mu_n := \frac{n}{n+1} \delta_{(0,...,0)} + \frac{1}{n+1} \delta_{(0,...,0,1)},
\end{equation*}
then for each $n \in \mathbb{N}$, $\spt \rho_n = \mathcal{G}_n$. Denote the $x_d$-face of the cube $[0, 1]^d$ by $F = [0, 1]^{d-1} \times \{1\}$. Then the primal quadratic optimal transport between $\rho_n$ and $\mu_n$ has a unique solution, which transports mass via
\begin{equation}
\label{eq: optimal plan in the grid example}
    F \cap \spt \rho_n \mapsto (0,...,0,1) \quad \text{ and } \quad \spt \rho_n \setminus F \mapsto (0,...,0).
\end{equation}
Hence by the compatibility condition \eqref{intro: compatibility condition}, both
\begin{align*}
    \phi^0_n: [0,1]^d & \to \R \quad \quad & \quad \quad \phi^1_n: [0,1]^d & \to \R\\
    (x_1,x_2,...,x_d) &\mapsto \left(x_d - \frac{n-1}{n}\right)_+ & (x_1,x_2,...,x_d) &\mapsto \left(x_d - 1\right)_+
\end{align*}
are optimal Brenier potentials. Since $\phi^0_n(0) = \phi_n^1(0)$ and
\begin{equation*}
    \big|\phi_n^0((0,...,0, 1)) - \phi_n^1((0,...,0, 1))\big| = \frac{1}{n},
\end{equation*}
we have
\begin{equation*}
    \diam_{L^\infty}(\Phi_{c_l}(\rho_n, \mu_n)) \geq \inf_{l \in \R} \|\phi_n^0 - \phi_n^1 - l\|_{L^\infty(\rho_n)} \geq \frac{1}{2n}.
\end{equation*}
It follows that the exponent in Theorem \ref{thrm: quantitative uniqueness for measures on uniform grids} is sharp in $\varepsilon$. Since $d_H(\spt \rho_n, [0, 1]^d)= \frac{\sqrt{d}}{2n}$, and $\Omega = [0, 1]^d$ satisfies Assumption \ref{ass: domain assumption connexivity by bounded curvature arcs}, the optimal exponent for bounds of the form given in Theorem \ref{thrm: c 1 alpha cost Linfty bound} cannot be better than $1$.

\subsection{Proof of Theorem \ref{thrm: quant stochastic}: Stochastic $L^\infty$ bound, empirical source measure}
We use the following theorem from \cite{reznikov2016covering}.
\begin{theorem}\textnormal{\cite[Theorem 2.1]{reznikov2016covering}}
\label{thrm: stochastic covering radii}
Let $\rho$ be a probability measure on $\R^d$ for which there exist constants $r_0, s, C >0$ such that for all $x \in \spt \rho$ and $r \leq r_0$,
\begin{equation*}
    \rho(B_r(x)) \geq Cr^s.
\end{equation*}
Then there exist constants $C_0, C_1, C_2, C_3, \beta_0> 0$ depending on $\rho$ such that 
\begin{equation}
    \forall N \in \mathbb{N}, \quad \mathbb{E} \left[d_H\left(\{X_i\}_{i=1}^N, \spt \rho\right) \right] \leq C_0 \left(\frac{\log N}{N}\right)^{1/s},
\end{equation}
and for all $\beta > \beta_0$,
\begin{equation}
\label{eq: concentration bound radii}
    \forall N \in \mathbb{N}, \quad\mathbb{P}\left[ d_H\left(\{X_i\}_{i=1}^N, \spt \rho\right) \geq C_1 \left(\frac{\beta \log{N}}{N}\right)^{1/s}\right] \leq C_2 N^{1-C_3 \beta}.
\end{equation}
\end{theorem}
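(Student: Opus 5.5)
The plan is a standard covering (net) argument: discretise $\spt \rho$ at scale $r$, use the lower mass bound to control both the number of net points and the probability that a given net ball receives no sample, and then take a union bound. First, almost surely $X_i \in \spt \rho$ for all $i$. Hence $d_H(\{X_i\}_{i=1}^N, \spt \rho) = \sup_{x \in \spt \rho} d(x, \{X_i\}_{i=1}^N)$, i.e. the covering radius of the sample inside $\spt\rho$. Write $D_N$ for this quantity.

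\textbf{Step 1 (packing bound and boundedness).} For $r \leq r_0$, let $\{z_1, \dots, z_M\} \subseteq \spt \rho$ be a maximal $r$-separated subset. The balls $B_{r/2}(z_j)$ are disjoint. By the hypothesis each has $\rho$-mass at least $C (r/2)^s$ (using $r/2 \leq r_0$), so
\begin{equation*}
M \leq \frac{2^s}{C r^s}.
\end{equation*}
In particular $M$ is finite, which gives that $\spt \rho$ is totally bounded. It is closed, hence compact, so $R := \diam \spt \rho < +\infty$; this is needed for the expectation bound. By maximality, $\spt\rho \subseteq \bigcup_j B_r(z_j)$.

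\textbf{Step 2 (union bound).} If every ball $B_r(z_j)$ contains at least one $X_i$, then every $x \in \spt \rho$ lies within $2r$ of the sample, so $D_N \leq 2r$. By independence, each ball is missed with probability $(1-\rho(B_r(z_j)))^N \leq (1 - C r^s)^N \leq e^{-C N r^s}$. Hence
\begin{equation*}
\mathbb{P}[D_N > 2r] \leq \frac{2^s}{C r^s} e^{-C N r^s}.
\end{equation*}
Now choose $r^s = \beta \log N/(C N)$, i.e. $r = (\beta\log N/(CN))^{1/s}$. The right-hand side becomes
\begin{equation*}
\frac{2^s}{\beta \log N}\, N\, N^{-\beta} \leq C_2 N^{1-\beta}.
\end{equation*}
This gives the concentration bound with $C_1 = 2 C^{-1/s}$ and $C_3 = 1$ (up to renormalising constants), valid whenever $r \leq r_0$. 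The constraint $r\le r_0$ fails only for finitely many $N$ (depending on $\beta$). For those $N$, I would either enlarge $C_2$ so the probability bound is trivially $\geq 1$, or use $D_N \leq R$ with $C_1$ enlarged. This is where $\beta_0$ enters: it must be large enough that these adjustments are uniform in $\beta > \beta_0$.

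\textbf{Step 3 (expectation).} Fix a single $\beta_1 > \beta_0$ large enough that $N^{1-\beta_1} \leq (\log N/N)^{1/s}$ for all $N\ge 2$. With $t_N = C_1(\beta_1 \log N/N)^{1/s}$, bound
\begin{equation*}
\mathbb{E}[D_N] \leq t_N + R\, \mathbb{P}[D_N \geq t_N] \leq t_N + R C_2 N^{1-\beta_1} \lesssim \Big(\frac{\log N}{N}\Big)^{1/s}.
\end{equation*}
The case $N=1$ is absorbed into $C_0$ using $D_N\le R$.

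The main obstacle is not the core estimate, which is the routine union bound above. It is the bookkeeping needed to make the constants $C_0,\dots,C_3,\beta_0$ uniform: the regime $r > r_0$ for small $N$, and the dependence of the threshold on $\beta$. Measurability of $D_N$ is handled by taking the supremum over a countable dense subset of $\spt\rho$.
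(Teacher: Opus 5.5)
The paper does not prove this statement; it is quoted from \cite[Theorem 2.1]{reznikov2016covering}, so there is no internal argument to compare with. Your argument (maximal $r$-separated net, packing bound, union bound) is the standard proof of covering-radius estimates of this kind. It is correct in substance, and you rightly note that the hypothesis already forces $\spt\rho$ to be compact, so $R<\infty$.

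One piece of bookkeeping is wrong as stated. In the regime $r=(\beta\log N/(CN))^{1/s}>r_0$, only your second option works. For large $\beta$ this regime contains, e.g., $N=\lfloor \beta/(Cr_0^s)\rfloor$. Making the bound trivially true there would require $C_2\ge N^{\beta-1}$, which is unbounded in $\beta$, and no choice of $\beta_0$ repairs this. Instead take $C_1\ge\max\{3C^{-1/s},\,R\,C^{-1/s}/r_0\}$. Then whenever $r>r_0$, the threshold $C_1(\beta\log N/N)^{1/s}=C_1C^{1/s}r$ exceeds $R\ge D_N$, so the event is empty, for every $\beta$. Using the factor $3$ rather than $2$ also reconciles your bound on $\mathbb{P}[D_N>2r]$ with the non-strict inequality in the statement. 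With this choice $\beta_0>0$ is arbitrary, and you can take $C_3=1$ and $C_2=\max\{1,2^s/(\beta_0\log 2)\}$. The $1$ covers $N=1$, where the threshold is $0$.

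Separately, the case $N=1$ of the expectation bound cannot be absorbed into $C_0$. The right-hand side is $C_0(\log 1)^{1/s}=0$, whereas $\mathbb{E}[D_1]>0$ unless $\spt\rho$ is a single point; for the uniform measure on $[0,1]$, $\mathbb{E}[D_1]=3/4$. This is an imprecision in the statement as written, not in your method: the expectation bound should be read for $N\ge 2$, and the same applies to the expectation bound in Theorem \ref{thrm: quant stochastic}. For $N\ge 2$ your Step 3 goes through as written.
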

\begin{proof}[Proof of Theorem \ref{thrm: quant stochastic}]
    Fix $\mu \in \PP(\Y)$. By Theorem \ref{thrm: c 1 alpha cost Linfty bound} with $\Omega = \spt \rho$, there exists a constant $C(\spt \rho, \X, \Y, c)>0$ such that
    \begin{equation}
    \label{eq: diameter bound pre expectation}
        \forall N \in \N, \quad \diam_{L^\infty} (\Phi_c(\rho_N, \mu)) \leq C d_H\left(\spt \rho, \{X_i\}_{i=1}^N\right)^{\alpha/(1 + \alpha)}.
    \end{equation}
    Here we used that
    \begin{equation*}
        d_H\left(\spt \rho, \{X_i\}_{i=1}^N\right) \leq (\diam\spt \rho)^{1/(1 + \alpha)}d_H\left(\spt \rho, \{X_i\}_{i=1}^N\right)^{\alpha/(1+\alpha)},
    \end{equation*}
     and that $\diam\spt \rho < + \infty$ by Assumption \ref{ass: domain assumption connexivity by bounded curvature arcs}. Taking the expectation of \eqref{eq: diameter bound pre expectation}, by Jensen's inequality, we have
    \begin{equation*}
        \mathbb{E} \left[d_H\left(\{X_i\}_{i=1}^N, \spt \rho\right)^{\alpha/(1+\alpha)} \right] \leq \mathbb{E} \left[d_H\left(\{X_i\}_{i=1}^N, \spt \rho\right) \right]^{\alpha/(1+\alpha)},
    \end{equation*}
    from which the bound \eqref{eq: expectation bound diameter general case} follows. For any $\varepsilon>0$,
    \begin{align*}
        \mathbb{P}\left[ \diam_{L^\infty} (\Phi_c(\rho_N, \mu)) \geq \varepsilon\right] \leq& \mathbb{P}\left[C d_H\left(\spt \rho, \{X_i\}_{i=1}^N\right)^{\alpha/(1+\alpha)} \geq \varepsilon\right]\\ =& \mathbb{P}\left[d_H\left(\spt \rho, \{X_i\}_{i=1}^N\right) \geq \left(\frac{\varepsilon}{C}\right)^{(1+ \alpha)/\alpha}\right].
    \end{align*}
    Consequently, by Theorem \ref{thrm: stochastic covering radii}, we deduce \eqref{eq: concentration bound for the diameter} with $C_1 = C C_1'^{\alpha/(1+\alpha)}$ where $C_1'$ is the constant from \eqref{eq: concentration bound radii}, and $C_2, C_3, \beta_0>0$ have the same values.
\end{proof}

% \listoftodos

\appendix
\section{Hölder regularity of the $p$-cost for $p \in (1, 2]$}
\label{app: regularity of p cost}
\begin{lemma}
    Let $h: \R^d \to \R$, $h(z) = \|z\|^p$ for some $p \in (1, 2]$. Then $h \in \mathcal{C}^{1, p-1}_{\loc}\big(\R^d\big)$.
\end{lemma}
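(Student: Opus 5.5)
The plan is to compute the gradient explicitly, $\nabla h(z) = p\|z\|^{p-2} z$ for $z \neq 0$ and $\nabla h(0) = 0$, and then prove the global estimate
\begin{equation*}
    \|\nabla h(z) - \nabla h(w)\| \leq C_p \|z - w\|^{p-1} \quad \forall z, w \in \R^d.
\end{equation*}
This is stronger than the local statement: combined with the boundedness of $h$ and $\nabla h$ on compact sets, it gives $h \in \mathcal{C}^{1,p-1}_{\loc}$. Differentiability away from the origin is clear. At $0$, $|h(z)| = \|z\|^p = o(\|z\|)$ because $p>1$, so $\nabla h(0)=0$ and $\nabla h$ is continuous. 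When $p=2$ the gradient is $2z$ and the claim is trivial, so from now on I assume $p \in (1,2)$ and write $F(z) = \|z\|^{p-2}z$.

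By symmetry I may assume $\|w\| \leq \|z\|$, and I split into two cases.

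\emph{Case 1: $\|z - w\| \geq \|z\|/2$.} Use the crude bound $\|F(z)-F(w)\| \leq \|z\|^{p-1} + \|w\|^{p-1} \leq 2\|z\|^{p-1} \leq 2\cdot 2^{p-1}\|z-w\|^{p-1}$. This case also covers $w = 0$.

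\emph{Case 2: $\|z-w\| < \|z\|/2$.} Every point $\xi$ on the segment $[w,z]$ then satisfies $\|\xi\| \geq \|z\|/2 > 0$, so $F$ is smooth along the segment. Its differential is $DF(\xi) = \|\xi\|^{p-2}\big(\id + (p-2)\hat\xi\otimes\hat\xi\big)$, with operator norm at most $\|\xi\|^{p-2}$, since the eigenvalues $1$ and $p-1$ both lie in $(0,1]$. The mean value inequality then gives
\begin{equation*}
    \|F(z)-F(w)\| \leq (\|z\|/2)^{p-2}\|z-w\| = 2^{2-p}\|z\|^{p-2}\|z-w\|^{2-p}\,\|z-w\|^{p-1}.
\end{equation*}
Because $2-p>0$ and $\|z-w\| < \|z\|/2$, we have $\|z\|^{p-2}\|z-w\|^{2-p} = (\|z-w\|/\|z\|)^{2-p} \leq 1$, which yields the bound with constant $2^{2-p}$.

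Combining the two cases gives the estimate with $C_p = p\max(2^p, 2^{2-p})$. The only delicate step is Case 2: one must choose the splitting threshold so that the segment $[w,z]$ stays away from the singularity at $0$, and so that the factor $\|z\|^{p-2}$, which blows up as $\|z\|\to0$, is absorbed by the smallness of $\|z-w\|$ relative to $\|z\|$. The rest is routine.
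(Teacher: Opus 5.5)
Your proof is correct. It reaches the same global estimate $\|\nabla h(z)-\nabla h(w)\|\le C_p\|z-w\|^{p-1}$ as the paper, but by a different mechanism. The paper uses no case split and no calculus. It expands
\[
\bigl\|\|z\|^{p-2}z-\|z'\|^{p-2}z'\bigr\|^2=\bigl(\|z\|^{p-1}-\|z'\|^{p-1}\bigr)^2+2\|z\|^{p-2}\|z'\|^{p-2}\bigl(\|z\|\|z'\|-\langle z,z'\rangle\bigr)
\]
into a radial part and an angular part. The radial part is controlled by the $(p-1)$-H\"older continuity of $t\mapsto t^{p-1}$ together with the reverse triangle inequality. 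For the angular part, with $\cos\theta=\langle z,z'\rangle/(\|z\|\|z'\|)$, the paper writes $1-\cos\theta=(1-\cos\theta)^{p-1}(1-\cos\theta)^{2-p}$, bounds the second factor by $2^{2-p}$, and uses $\|z\|\|z'\|-\langle z,z'\rangle\le\frac12\|z-z'\|^2$.

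Your near/far dichotomy uses only that $F$ is smooth away from the origin with $\|F(\xi)\|=\|\xi\|^{p-1}$ and $\|DF(\xi)\|=\|\xi\|^{p-2}$; the threshold $\|z-w\|<\|z\|/2$ keeps the segment away from the singularity. The paper's identity is shorter but relies on the Euclidean inner product. Your scaling argument is the standard one for maps homogeneous of degree $p-1$, so it would carry over unchanged to $p$-th powers of other norms that are smooth away from the origin, where no such identity is available. You also check explicitly that $h$ is differentiable at $0$ with $\nabla h$ continuous there, which the paper passes over.
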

\begin{proof}
We use the proof given in \cite[Section 3.3]{mischler2024quantitative}, see also \cite{lindqvist2019notes} for related inequalities. For $z \neq 0$, we have
\begin{equation*}
    \nabla h(z) = p \|z\|^{p-2} z.
\end{equation*}
so that $\nabla h$ is locally bounded and continuous in $z$. Let $z, z' \in \R^d$, then
\begin{equation}
\label{eq: appendix p cost bound}
    \big\| \|z\|^{p-2} z - \|z'\|^{p-2} z' \big \|^2 = \big \| \| z \|^{p-1} - \|z' \|^{p-1} \big\|^2 + 2 \|z \|^{p-2} \|z'\|^{p-2} \big( \|z\| \| z'\| - \langle z, z' \rangle \big).
\end{equation}
The first term of the right hand side of \eqref{eq: appendix p cost bound} is controlled using that the function $t \to t^{p-1}$ is $(p-1)$ Hölder continuous on $\R_+$, giving
\begin{equation*}
    \big \| \| z \|^{p-1} - \|z' \|^{p-1} \|^2 \leq \Big| \|z\| - \|z'\| \Big|^{2(p-1)} \leq \|z-z'\|^{2(p-1)},
\end{equation*}
where the second inequality follows from the triangle inequality. For the second term in \eqref{eq: appendix p cost bound}, we have (assuming $z \neq z'$)
\begin{align*}
    \|z \|^{p-2} \|z'\|^{p-2} \big( \|z\| \| z'\| - \langle z, z' \rangle \big) =& \|z \|^{p-1} \|z'\|^{p-1} \left( 1- \frac{\langle z, z' \rangle}{\|z\| \| z'\|} \right)^{p-1}\left( 1- \frac{\langle z, z' \rangle}{\|z\| \| z'\|} \right)^{2-p}\\
    \leq&2^{2-p}\left( \|z\| \| z'\|- \langle z, z' \rangle \right)^{p-1}\\
    \leq&  2^{3-2p} \|z-z'\|^{2(p-1)}
\end{align*}
where we used in the last line that $\|z\| \| z'\|- \langle z, z' \rangle \leq \frac{1}{2}\|z\|^2 + \frac{1}{2}\|z'\|^2 - \langle z, z' \rangle = \frac{1}{2}\| z- z'\|^2$. Thus, the left hand side of \eqref{eq: appendix p cost bound} is bounded by $C\|z-z'\|^{2(p-1)}$ for some $C(p)>0$ so that $\nabla h$ is $p-1$ Hölder continuous as required.
\end{proof}
\section{Selecting $c$-concave representatives}
\label{ch: precisions on c concavity}

The primal problem \eqref{intro: c OT problem} is only concerned with the values of the cost $c$ on $\spt \rho \times \spt \mu$. Consequently, for any $A \supseteq \spt \rho$ and $B \supseteq \spt \mu$, one could define the primal problem over $A \times B$ rather than $\R^d \times \R^d$, and arrive at a dual problem with constraint \eqref{intro: dual constraint} over $A \times B$ instead. Instead of taking the $c$-transforms \eqref{intro: c transf definition} with respect to $\spt \rho$ and $\spt \mu$, we could similarly have defined $(c, A)$ and $(\overline c, B)$ transforms - there is no canonical choice, and different sets $A, B$ give fundamentally different transforms. We emphasise that these transforms act on pointwise-defined functions, and changing the value of a function on $\rho$ null sets can drastically change the resulting transform.

The proposition below says that the dual problem on $\R^d \times \R^d$ and on $\spt \rho \times \spt \mu$ are ``essentially the same problem", and that both these problems are ``essentially the same problem" as searching for $c$-concave optimisers. The same result holds more generally for any $A \supseteq \spt \rho$ and $B \supseteq \spt \mu$ and $(c, A, B)$-concave potentials.
\begin{proposition}
\label{prop: selection principle for c concave representative}
Suppose that $c$ satisfies Assumption \ref{ass: existence hypotheses}.
\begin{enumerate}[label=(\roman*)]
    \item Given $(\phi, \psi)$ optimal for \eqref{intro: c dual problem} on $\R^d \times \R^d$, there exists a $(c, \spt \rho, \spt \mu)$-concave pair $(\tilde \phi, \tilde \psi)$ with $\phi = \tilde \phi$ $\rho$-a.e. and $\psi = \tilde \psi$ $\mu$-a.e.
    \item Given $(\phi,\psi)$ optimal for the dual problem \eqref{intro: c dual problem} on $\spt \rho \times \spt \mu$, there exists $(\tilde \phi, \tilde \psi)$ satisfying the dual constraint \eqref{intro: dual constraint} on all $\R^d \times \R^d$, such that $\phi = \tilde \phi$ $\rho$-a.e. and $\psi = \tilde \psi$ $\mu$-a.e.
\end{enumerate}
\end{proposition}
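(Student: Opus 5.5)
The plan is to use the double $c$-transform for (i) and a modification off the supports for (ii).

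\emph{Part (i).} Let $(\phi,\psi)$ be optimal on $\R^d\times\R^d$ and set $\Psi:=\spt\rho\times\spt\mu$. First restrict: since $\phi\oplus\psi\le c$ on all of $\R^d\times\R^d$, it holds on $\spt\rho\times\spt\mu$. Next, modify $\phi$ on a $\rho$-null set so that the constraint and compatibility still hold. Concretely, let $N\subseteq\R^d$ be $\rho$-null and $M$ be $\mu$-null such that
\[
\phi(x)+\psi(y)=c(x,y)\quad\text{for }\gamma\text{-a.e. }(x,y),
\]
with $\gamma\in\Gamma_c(\rho,\mu)$ fixed. Put $\phi_1=\phi$ on $\spt\rho\setminus N$ and $-\infty$ elsewhere. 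Then define
\[
\tilde\psi:=\phi_1^{c,\spt\rho}, \qquad \tilde\phi:=\tilde\psi^{\overline c,\spt\mu}.
\]
These are $(c,\spt\rho,\spt\mu)$-concave, since $\tilde\phi^{c,\spt\rho}=\tilde\psi$ by the standard identity $f^{ccc}=f^c$, which holds over the restricted index sets as well. Admissibility gives the pointwise order $\tilde\psi\ge\psi$ on $\spt\mu$ and then $\tilde\phi\ge\phi_1$ on $\spt\rho$. The transforms also satisfy $\tilde\phi\oplus\tilde\psi\le c$ on $\Psi$.

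To show the a.e.\ equalities, integrate against $\gamma$. The upper bound $\int\tilde\phi\,d\rho+\int\tilde\psi\,d\mu\le\int c\,d\gamma$ follows from the constraint on $\spt\gamma\subseteq\Psi$. This requires measurability and integrability of the transforms. I will handle that using the bound $|c|\le a+b$, which gives $\tilde\psi\le b+\text{const}$ and $\tilde\phi\le a+\text{const}$, while the lower bounds come from $\tilde\phi\ge\phi$ and $\tilde\psi\ge\psi$. Measurability follows since a $c$-transform of a lower semi-continuous $c$ is an infimum of a family of functions. Here I expect to need upper semi-continuity, or at least Borel measurability via a countable dense subset when $c$ is only lsc. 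This measurability point is the main technical obstacle; I would try approximating by a countable dense subset of $\spt\rho\setminus N$ where needed, or else invoke the standard result in \cite[Theorem 5.10]{villani2008optimal}. Since the value of $(\phi,\psi)$ is already maximal and $\tilde\phi\ge\phi$, $\tilde\psi\ge\psi$ hold a.e., equality of the integrals forces $\tilde\phi=\phi$ $\rho$-a.e.\ and $\tilde\psi=\psi$ $\mu$-a.e.

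\emph{Part (ii).} Given $(\phi,\psi)$ optimal on $\spt\rho\times\spt\mu$, first apply the construction of (i) to obtain a concave pair agreeing a.e. Then extend by setting $\tilde\phi=-\infty$ off $\spt\rho$ and $\tilde\psi=-\infty$ off $\spt\mu$. The constraint then holds trivially on $\R^d\times\R^d\setminus\Psi$ and holds on $\Psi$ by construction. Since $\R^d\setminus\spt\rho$ is $\rho$-null and $\R^d\setminus\spt\mu$ is $\mu$-null, the a.e.\ equalities and the objective value are unchanged. Some care is needed to make sure the functions remain real-valued a.e.\ and in $L^1$; this follows from the integrability bounds established in (i).
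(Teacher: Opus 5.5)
Your proposal is correct. Part (i) is the paper's argument in mirror image. The paper transforms $\psi$ first: it combines $\phi\le\psi^{\overline c,\spt\mu}$ pointwise with $\psi^{\overline c,\spt\mu}\in L^1(\rho)$ (from $|c|\le a+b$) and equality of optimal values to get $\phi=\psi^{\overline c,\spt\mu}$ $\rho$-a.e., and then repeats this on the $\mu$ side. You transform $\phi$ first and do both comparisons at once via $\int\tilde\phi\,\di\rho+\int\tilde\psi\,\di\mu\le\int c\,\di\gamma=\int\phi\,\di\rho+\int\psi\,\di\mu$, with the same integrability bounds. The modification $\phi_1$ and the sets $N,M$ play no role; $\tilde\psi=\phi^{c,\spt\rho}$ works directly.

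Part (ii) is where you genuinely diverge. The paper takes $\tilde\phi=\psi^{\overline c,\spt\mu}$ and $\tilde\psi=\tilde\phi^{c,\R^d}$, the largest partner of $\tilde\phi$ on all of $\R^d$. This is a canonical extension that is meaningful off the supports; for example, under the hypotheses of Proposition~\ref{prop: regularity of potentials given by cost} the $\rho$-side potential is finite and locally Lipschitz on all of $\R^d$. The cost is that the optimality argument has to be re-run. Your extension by $-\infty$ off $\spt\rho$ and $\spt\mu$ is legitimate under the paper's convention of $\R\cup\{-\infty\}$-valued representatives; the paper uses the same device in the proof of Proposition~\ref{prop: char of the optimal set}. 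It is far shorter---under that convention it needs neither part (i) nor any integrability check---but the resulting potentials carry no information outside the supports.

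On measurability, which you flag: the paper's proof is silent on it too. Your countable-dense-subset idea would not work as stated, since $x\mapsto c(x,y)-\phi_1(x)$ has no semicontinuity in $x$ when $\phi_1$ is merely measurable. For continuous $c$ the issue disappears: both transforms are infima of continuous functions and hence upper semicontinuous.
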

\begin{proof}
    Take $\phi, \psi : \R^d \to \overline \R$ in $L^1$, optimal for the dual problem \eqref{intro: c dual problem} over $\R^d \times \R^d$. Then, they are also optimal for the dual problem over $\R^d \times \spt \mu$ (they obtain the same value in the objective, and the values of both these problems are the same). By the definition of the transform, the dual constraint \eqref{intro: dual constraint} gives
    \begin{equation}
    \label{eq: pointwise comparison between c transformed functions}
        \phi(x) \leq \psi^{\overline c, \spt \mu}(x) \quad \forall x \in \R^d.
    \end{equation}
    We will first show that $\psi^{\overline c, \spt \mu} \in L^1(\rho)$. By Assumption \ref{ass: existence hypotheses}, there exist real valued functions $a \in L^1(\rho)$ and $b \in L^1(\mu)$ such that $|c(x, y)| \leq a(x) + b(y)$ for all $(x, y) \in \R^d \times \R^d$ and hence
    \begin{equation*}
        \forall x \in \R^d\; \forall y \in \spt \mu , \quad \psi^{\overline c, \spt \mu}(x) \leq c(x, y) - \psi(y) \leq a(x) + b(y) - \psi(y).
    \end{equation*}
    Since $b, \psi \in L^1(\mu)$ there exists at least one point $y_0 \in \spt \mu$ at which both are finite, and hence
    \begin{equation*}
    \psi^{\overline c, \spt \mu}(x) \leq a(x) + b(y_0) - \psi(y_0) = a(x) + C.
    \end{equation*}
    It follows that $|\psi^{\overline c, \spt \mu}(x)| \leq \max\left(|\phi(x)|, |a(x) + C| \right)$ and hence $\psi^{\overline c, \spt \mu} \in L^1(\rho)$. Thus $(\psi^{\overline c, \spt \mu}, \psi)$ are admissible (and optimal) for the dual problem on $\R^d \times \spt \mu$, and so given some $\gamma \in \Gamma_c(\rho, \mu)$
    \begin{equation}
        \int_{\R^d \times \R^d} \psi^{\overline c, \spt \mu}(x) + \psi(y) \di \gamma(x, y) = \int_{\R^d \times \R^d} \phi(x) + \psi(y) \di \gamma(x, y).
    \end{equation}
    Since $\psi^{\overline c, \spt \mu}, \phi \in L^1(\rho)$ and $\psi \in L^1(\mu)$, we can use linearity of the integral to deduce
    \begin{equation}
    \label{eq: equality of c trans optims}
        \int_{\R^d} \phi(x) \di \rho(x) = \int_{\R^d} \psi^{\overline c, \spt \mu}(x) \di \rho(x).
    \end{equation}
    Combining \eqref{eq: pointwise comparison between c transformed functions} and \eqref{eq: equality of c trans optims}, we deduce $\phi = \psi^{\overline{c}, \spt \mu}$ $\rho$-a.e. By a symmetric argument, one deduces $(\psi^{\overline c, \spt \mu})^{c, \spt \rho} = \psi$ $\mu$-a.e. which gives a pair of $(c, \spt \rho, \spt \mu)$ representatives, proving the first claim.

    The proof of the second statement, is very similar. Given a pair of Kantorovich potentials $(\phi, \psi)$ defined only on $\spt \rho \times \spt \mu$, $(\psi^{\overline c, \spt \mu}, \psi)$ are Kantorovich potentials on $\R^d \times \spt \mu$ with $\psi^{\overline c, \spt \mu} = \phi$ $\rho$-a.e by the same logic. Then $(\psi^{\overline c, \spt \mu}, (\psi^{\overline c, \spt \mu})^{c, \R^d})$ are a pair of Kantorovich potentials which satisfy the dual constraint \eqref{intro: dual constraint} on all $\R^d \times \R^d$, whilst being $\rho$ and $\mu$ a.e. equal to the original pair.
\end{proof}

\section{Sets satisfying Assumption \ref{ass: domain assumption connexivity by bounded curvature arcs}}
\label{ch: path assumption}

\begin{proposition}
\label{prop: path regularity for various domains}
    Let $\Omega \subseteq \R^d$ be a connected, bounded set which can be written as a finite union of bounded, $\mathcal{C}^0$ domains (domains whose boundary is locally the graph of a continuous function) and $\mathcal{C}^{1,1}$ compact connected submanifolds of $\R^d$ with boundary. Then $\Omega$ satisfies Assumption \ref{ass: domain assumption connexivity by bounded curvature arcs}.
\end{proposition}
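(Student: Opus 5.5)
The plan is to reduce Assumption \ref{ass: domain assumption connexivity by bounded curvature arcs} to two ingredients: a uniform bound on paths \emph{within} each piece, and a concatenation rule across pieces. Write $\Omega = \bigcup_{k=1}^K \Omega_k$, where each $\Omega_k$ is either a bounded $\mathcal{C}^0$ domain or a $\mathcal{C}^{1,1}$ compact connected submanifold with boundary.

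The first step is to record the concatenation lemma. If $\omega_1$ joins $a$ to $b$ and $\omega_2$ joins $b$ to $c$, with $\dot\omega_j \in BV$, then reparametrising each onto $[0,1/2]$ and $[1/2,1]$ gives a curve whose speed is at most $2\max_j \|\dot\omega_j\|_{L^\infty}$. Its total curvature is at most $2\sum_j |\ddot\omega_j|([0,1])$, plus a jump of $\dot\omega$ at $t=1/2$ of size at most $4\max_j\|\dot\omega_j\|_{L^\infty}$. So a concatenation of $m$ curves, each with cost at most $C$, has cost at most $C'(m)C$. Since $\Omega$ is connected and $K$ is finite, the intersection graph on the pieces is connected. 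Here two pieces are adjacent when their intersection is nonempty; note that a finite union of closed-or-open connected pieces being connected forces some $\overline{\Omega_k}\cap\Omega_l\neq\emptyset$. If the closure issue bites, I would handle it by perturbing the junction point slightly into the piece. Fixing one junction point for each adjacent pair, any $z\in\Omega_k$, $z'\in\Omega_l$ are then joined by at most $2K$ intra-piece curves. It therefore suffices to prove the bound inside each single piece.

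For a $\mathcal{C}^{1,1}$ compact connected submanifold $M$ with boundary, I would cover $M$ by finitely many charts in which $M$ is a $\mathcal{C}^{1,1}$ graph over a convex set: a ball, or a half-ball at the boundary. Within a chart I join two points by the image of the straight segment under the graph map $x\mapsto (x,g(x))$. Its velocity is bounded by $\|Dg\|_\infty$ times the segment length. Its derivative is $\mathcal{C}^{0,1}$ in $t$, hence lies in $W^{1,\infty}\subset BV$, with $|\ddot\omega|([0,1])\le \mathrm{Lip}(Dg)\cdot\mathrm{length}^2$. Compactness gives a Lebesgue number $\delta$ for the cover. Connectedness plus compactness give a uniform bound on the number of $\delta$-chart hops needed to connect any two points, for instance via a finite $\delta/2$-net and the connected graph on it. Concatenating then finishes this case.

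For a bounded $\mathcal{C}^0$ domain $U$, interior points are easy, since one can use polygonal paths, which have bounded variation of derivative with one jump per corner. The difficulty is near the boundary. Locally, $U$ looks like $\{x_d > f(x')\}$ with $f$ continuous on a cube, and a point $z$ near the boundary can be joined by the vertical segment $z + s e_d$ to the level $x_d = \sup f + \eta$ inside the chart cylinder. That level set is a flat slab contained in $U$. So every point of $U\cap(\text{chart})$ connects with one vertical segment and one horizontal segment, each of bounded length, giving cost bounded by a constant depending only on the chart. Finitely many charts cover $\partial U$, and the remaining compact part of $U$, staying away from $\partial U$, is handled by polygonal paths in a finite union of balls. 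Connecting charts is again a finite concatenation.

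I expect the main obstacle to be this $\mathcal{C}^0$ boundary step: making sure the vertical segment stays in $U$ up to the slab, and that the slab sits inside the cylinder where the graph description is valid. This needs the cylinder height to exceed $\operatorname{osc} f$, which I would arrange by shrinking the base of the chart using uniform continuity of $f$. A second obstacle is making "uniformly bounded number of hops" rigorous, for which the net-plus-connected-graph argument should suffice.
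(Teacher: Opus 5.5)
Your proposal is correct and is essentially the paper's argument: a finite chart cover with a connected intersection graph; straight segments pushed through $\mathcal{C}^{1,1}$ parametrisations on submanifolds; a vertical--horizontal--vertical path through a flat slab in each $\mathcal{C}^0$ boundary chart (the paper takes $\Omega\cap Q=\{z_d<g(\hat z)\}$ in $(-L,L)^d$ with $|g|\le L/2$ and passes through the level $-L/2$); and concatenation along a path in the finite chart graph. One small fix is needed: at boundary points of a $\mathcal{C}^{1,1}$ submanifold, the graph domain in the tangent plane need not be convex (think of the inner circle of a planar annulus), so, as the paper does, push straight segments forward through a $\mathcal{C}^{1,1}$ chart defined on a half-ball rather than through the graph map.
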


\begin{proof}[Proof of Assumption \ref{ass: domain assumption connexivity by bounded curvature arcs} for bounded $\mathcal{C}^0$ domains]
    A $\mathcal{C}^0$ domain is a domain whose boundary can be locally represented as the graph of a continuous function, see \cite[Definition 1.2.1.1]{GrisvardPierre1980Bvpi}. Let $\Omega$ be a $\mathcal{C}^0$ domain, we cover the compact set $\overline \Omega$ with the open sets
    \begin{equation}
    \label{eq: open cover}
        \bigcup_{x \in \Int \Omega} B_{r_x}(x) \; \cup \bigcup_{i=0}^n Q_i,
    \end{equation}
    where $0 < r_x \leq \diam \Omega$ is such that $B_{r_x}(x) \subseteq \Omega$ and $\{Q_i\}_{i=0}^n$ are a finite family of open cubes $Q$ with the following properties: There exist local systems of orthonormal coordinates $\{z_1,...,z_d\}$ for $\R^d$,
    \begin{enumerate}
        \item In these coordinates, $Q = (-L, L)^d$.
        \item There exists a continuous function $g: \hat Q \to \R$ where
            \begin{equation*}
                \hat Q = \left\{(z_1,...,z_{d-1})\, :\, -L< z_i < L, \quad 1 \leq i \leq d-1 \right\}
            \end{equation*}
            and such that
            \begin{equation*}
                |g(\hat z)| \leq L/2 \; \text{ for each } \; \hat z = (z_1,...z_{d-1}) \in \hat Q,
            \end{equation*}
            \begin{equation*}
                \Omega \cap Q = \{ z = (\hat z, z_d) : z_d < g(\hat z) \},
            \end{equation*}
            \begin{equation*}
                \partial\Omega \cap Q = \{ z = (\hat z, z_d) : z_d = g(\hat z) \}.
            \end{equation*}
    \end{enumerate}
    We take a finite subcover of \eqref{eq: open cover}, then consider the graph $G$ whose vertices are the charts of our subcover, and which contains an edge when two charts intersect. This graph is connected and finite. We first show that any two points on one of these charts can be joined by an arc with the quantity \eqref{eq: connexivity of support by arcs of uniformly bounded curvature} uniformly bounded. For any $ x \in \Omega$, two points $z, z' \in B_{r_x}(x)$ can be joined by $\omega(t) = (1-t) z + t z'$, with $\|\dot \omega\|_{L^\infty} = \|z'-z\| \leq \diam \Omega$ and $|\ddot \omega|([0, 1]) = 0$. For a boundary chart $Q$ and any $z, z' \in \Omega \cap Q$, we construct a curve joining them whose length and curvature are bounded independently of these points. Setting $ z = (\hat z, z_d)$ and $z' = (\hat z',  z_d')$ in coordinates, we follow a piecewise linear interpolation
    \begin{equation}
    \label{eq: path connecting c0 boundary}
        \omega(t) = \begin{cases}
            (\hat z, z_d + 3t(-L/2 - z_d)) & t \in [0, 1/3]\\
            (\hat z + (3t-1)(\hat z' - \hat z), -L/2) & t \in [1/3, 2/3]\\
            (\hat z', -L/2 + (3t-2)(z_d' + L/2) & t \in [2/3, 1],
        \end{cases}
    \end{equation}
    see Figure \ref{fig: path interpolation c0 boundary}. Then $\|\dot \omega\|_{L^\infty} \leq 3 \diam \Omega$, and
    \begin{equation}
    \label{eq: bounding tv app}
        |\ddot \omega|([0, 1]) = \|\dot \omega(1/3^+) - \dot \omega(1/3^-)\| + \|\dot \omega(2/3^+) - \dot \omega(2/3^-)\| \leq 4 \| \dot \omega\|_{L^\infty} \leq 12 \diam \Omega,
    \end{equation}
    so that any boundary chart $Q$ can be connected by arcs with the quantity \eqref{eq: connexivity of support by arcs of uniformly bounded curvature} uniformly bounded.
\begin{figure}[ht]
    \centering
\begin{tikzpicture}[scale=0.7, >=Latex]

\def\W{7.2} 
\def\H{4.8}  
\def\xL{0}
\def\xR{\W}
\def\yB{0}
\def\yT{\H}

\def\yMid{2.4}
\def\yLow{1.4}
\def\yHigh{3.4}

\def\xa{0.7}
\def\xb{6.4}
\def\ya{2.95}
\def\yb{2.82}

\def\yint{1.4}

\tikzset{
box/.style={dashed, line width=0.9pt},
bdry/.style={line width=1.1pt},
pathc/.style={line width=1.1pt},
strip/.style={dashed, line width=0.6pt, opacity=0.45},
shadeomega/.style={fill=black, fill opacity=0.06},
dot/.style={circle, fill, inner sep=1.4pt},
}

\draw[box] (\xL,\yB) rectangle (\xR,\yT);
\node[above right] at (\xR,\yT) {$Q$};

\draw[strip] (\xL,\yLow) -- (\xR,\yLow);
\draw[strip] (\xL,\yHigh) -- (\xR,\yHigh);

\node[left, opacity=0.65] at (\xL,\yHigh) {$L/2$};
\node[left, opacity=0.65] at (\xL,\yLow) {$-L/2$};

\fill[shadeomega]
(\xL,\yB) --
(\xR,\yB) --
(\xR,2.92)
.. controls (6.85,2.82) and (6.55,2.76) .. (\xb,\yb)
.. controls (5.65,2.92) and (5.15,3.05) .. (4.55,3.02)
.. controls (4.00,2.98) and (3.55,2.70) .. (2.95,2.62)
.. controls (2.25,2.56) and (1.85,2.84) .. (1.35,2.88)
.. controls (1.05,2.89) and (0.88,2.90) .. (\xa,\ya)
.. controls (0.45,3.00) and (0.18,3.06) .. (\xL,3.12)
-- (\xL,\yB) -- cycle;

\draw[bdry]
(\xL,3.12)
.. controls (0.45,3.00) and (0.88,2.90) .. (\xa,\ya)
.. controls (1.05,2.89) and (1.35,2.88) .. (1.75,2.74)
.. controls (2.05,2.64) and (2.45,2.60) .. (2.95,2.62)
.. controls (3.55,2.70) and (4.00,2.98) .. (4.55,3.02)
.. controls (5.15,3.05) and (5.65,2.92) .. (\xb,\yb)
.. controls (6.55,2.76) and (6.85,2.82) .. (\xR,2.92);

\draw[pathc] (\xa,\ya) -- (\xa,\yint) -- (\xb,\yint) -- (\xb,\yb);

\node[dot] at (\xa,\ya) {};
\node[dot] at (\xb,\yb) {};

\node[above left] at (\xa,\ya) {$z$};
\node[above] at (\xb,\yb) {$z'$};

\node at (3.6,0.95) {$\Omega\cap Q$};

\end{tikzpicture}
    \caption{The path \eqref{eq: path connecting c0 boundary} connecting points on the chart of a $\mathcal{C}^0$ boundary.}
    \label{fig: path interpolation c0 boundary}
\end{figure}
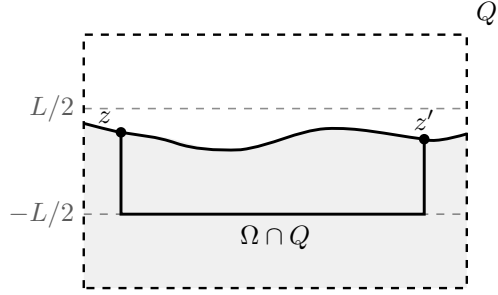
Constructing a path between arbitrary $z, z' \in \Omega$, we choose $\omega$ connecting $z$ to $z'$ to follow a concatenation of arcs which correspond to a simple path in the connectivity graph of charts, where each arc section has derivatives uniformly bounded. Then
    \begin{equation*}
        \|\dot \omega \|_{L^\infty} \leq C \diam \Omega
    \end{equation*}
    for some constant $C>0$ depending on $\diam G$ the graph diameter, and hence
    \begin{equation*}
        |\ddot \omega|([0, 1]) \leq C \diam \Omega
    \end{equation*}
    again for $C>0$ depending on $\diam G$. We are done since $z, z' \in \Omega$ were arbitrary.
\end{proof}
\begin{proof}[Proof of Assumption \ref{ass: domain assumption connexivity by bounded curvature arcs} for $\mathcal{C}^{1,1}$ compact connected submanifolds]
Let $s = \dim \Omega$. Similar to the $\mathcal{C}^0$ case, $\Omega$ can be covered by a finite number of $\mathcal{C}^{1, 1}$ charts, whose intersection graph is connected, and so it suffices to show Assumption \ref{ass: domain assumption connexivity by bounded curvature arcs} for a single chart. We consider an interior chart; boundary charts are treated identically. 

Let $f: B^s  \to  \Omega$ be a $\mathcal{C}^{1, 1}$ diffeomorphism between its image and $B^s = B^s_1(0) \subseteq \R^s$ the unit ball in dimension $s$. For $z, z' \in f(B^s) \subseteq \Omega$, let $w, w' \in B^s$ be such that $f(w) = z$ and $f(w') = z'$. Then we join $z$ and $z'$ by $\omega(t) = f((1-t)w + t w')$, so that
\begin{equation*}
    \|\dot \omega(t)\| =  \|(w' - w) \cdot \nabla f( (1-t)w + t w') \|   \leq 2\| \nabla f \|_{\mathcal{C}^0(B^s)}.
\end{equation*}
For $s, t \in [0,1]$,
\begin{equation*}
    \| \dot\omega(t) - \dot \omega(s)\| \leq |t-s| \|w - w'\|^2 [\nabla f]_{B_s, 1}
\end{equation*}
so that
\begin{equation*}
    |\ddot \omega|\big([0, 1]\big)  \leq 4 [\nabla f]_{B_s, 1},
\end{equation*}
which connects any two points by arcs with uniformly bounded derivatives as required.
\end{proof}
This clearly extends to a connected $\Omega \subseteq \R^d$ that can be written as a finite union of the above classes, by the same graph-diameter arguments and concatenation of paths.

\printbibliography

% \listoftodos

\end{document}